\newif\iffinal
\definecolor{BrickRed}{rgb}{0.65,0.08,0}
\numberwithin{equation}{section}
\numberwithin{figure}{section}
\numberwithin{table}{section}
\newtheorem{Lemma}{Lemma}[section]
\newtheorem{Proposition}{Proposition}[section]
\newtheorem{Theorem}{Theorem}[section]
\newtheorem{Condition}{Condition}[section]
\newtheorem{Remark}{Remark}[section]
\newtheorem{Example}{Example}[section]
\newtheorem{Corollary}{Corollary}[section]
\newcommand{\lfl}{\lfloor}
\newcommand{\rfl}{\rfloor}
\newcommand{\Rd}{{\Rmb^d}}
\newcommand{\Cmb}{{\mathbb{C}}}
\newcommand{\Emb}{{\mathbb{E}}}
\newcommand{\Nmb}{{\mathbb{N}}}
\newcommand{\Pmb}{{\mathbb{P}}}
\newcommand{\Rmb}{{\mathbb{R}}}
\newcommand{\Smb}{{\mathbb{S}}}
\newcommand{\Bmc}{{\mathcal{B}}}
\newcommand{\Gmc}{{\mathcal{G}}}
\newcommand{\Lmc}{{\mathcal{L}}}
\newcommand{\Pmc}{{\mathcal{P}}}
\newcommand{\Rmc}{{\mathcal{R}}}
\newcommand{\Smc}{{\mathcal{S}}}
\newcommand{\one}{{\boldsymbol{1}}}
\newcommand{\mubar}{{\bar{\mu}}}
\newcommand{\nubar}{{\bar{\nu}}}
\newcommand{\Smcbar}{{\bar{\Smc}}}
\newcommand{\Ybar}{{\bar{Y}}}
\newcommand{\btil}{{\tilde{b}}}
\newcommand{\etatil}{{\tilde{\eta}}}
\newcommand{\mtil}{{\tilde{m}}}
\newcommand{\Xtil}{{\tilde{X}}}
\numberwithin{equation}{section}
\newcommand{\emppre}{{\nu}}
\newcommand{\lawpre}{{\mu}}
\newcommand{\jointpre}{{\theta}}
\newcommand{\emplimit}{{\bar{\nu}}}
\newcommand{\lawlimit}{{\bar{\mu}}}
\newcommand{\lawlimitmean}{{\hat{\mu}}}
\newcommand{\Xlimit}{{\bar{X}}}
\begin{document}

	\title[Stationarity for the graphon particle system]{Stationarity and uniform in time convergence for the graphon particle system}

	\makeatletter
	\@namedef{subjclassname@2020}{\textup{2020} Mathematics Subject Classification}
	\makeatother

	\date{\today}
	\subjclass[2020]{
	05C80
	60J60  	
	60K35%
	}
	\keywords{
	graphons,
	graphon particle systems,
	mean field interaction,
	heterogeneous interaction,
	networks,
	exponential ergodicity,
	stationary distribution,
	long time behavior,
	uniform in time law of large numbers,
	uniform in time Euler approximations%
	}
	 
	\author[Bayraktar]{Erhan Bayraktar}
	\address{Department of Mathematics, University of Michigan, 530 Church Street, Ann Arbor, MI 48109}
	\author[Wu]{Ruoyu Wu}
	\address{Department of Mathematics, Iowa State University, 411 Morrill Road, Ames, IA 50011} 
	\email{erhan@umich.edu, ruoyu@iastate.edu}
			 
	\begin{abstract}	
	We consider the long time behavior of heterogeneously interacting diffusive particle systems and their large population limit.
	The interaction is of mean field type with weights characterized by an underlying graphon.
	The limit is given by a graphon particle system consisting of independent but heterogeneous nonlinear diffusions whose probability distributions are fully coupled.
	Under suitable assumptions, including a certain convexity condition, we show the exponential ergodicity for both systems, establish the uniform-in-time law of large numbers for marginal distributions as the number of particles increases, and introduce the uniform-in-time Euler approximation.
	The precise rate of convergence of the Euler approximation is provided.
	\end{abstract}	
	
	\maketitle

	\tableofcontents

\section{Introduction}

In this work we study the long time behavior of graphon particle systems and the finite particle approximations.
The interaction is of mean-field type and characterized by a graphon $G$, which is a symmetric measurable function from $[0,1] \times [0,1]$ to $[0,1]$ (see e.g.\ \cite{Lovasz2012large} for the theory of graphons).
More precisely, denoting by $\Xlimit_u$ the state of the particle at $u \in [0,1]$,
\begin{align}
	\Xlimit_u(t) & = \Xlimit_u(0) + \int_0^t \left( f(\Xlimit_u(s)) + \int_0^1 \int_{\Rmb^d} b(\Xlimit_u(s),x)G(u,v) \,\lawlimit_{v,s}(dx)\,dv\right)ds \notag \\
	& \qquad + \sigma B_u(t), \quad u \in [0,1], \: t \ge 0, \label{eq:system}
\end{align}
where $\lawlimit_{v,s}$ is the probability distribution of the $\Rd$-valued random variable $\Xlimit_v(s)$ for each $v \in [0,1]$ and $s \ge 0$, $f$ are $b$ are suitable functions, $\sigma \in \Rmb^{d \times d}$ is a constant, $\{B_u : u \in I\}$ are $d$-dimensional standard Brownian motions, and $\{\Xlimit_u(0),B_u : u \in I\}$ are mutually independent.
We will also study the mean-field particle system with heterogeneous interactions given by 
\begin{align}
	X_i^n(t) & = \Xlimit_{\frac{i}{n}}(0) + \int_0^t \left( f(X_i^n(s)) + \frac{1}{n} \sum_{j=1}^n \xi_{ij}^n b(X_i^n(s),X_j^n(s)) \right) ds
	\notag \\
	& \qquad + \sigma B_{\frac{i}{n}}(t), \quad i \in \{1,\dotsc,n\}, \: t \ge 0,
	\label{eq:system-n}
\end{align}
and its Euler discretization.
Here $\{\xi_{ij}^n: 1 \le i \le j \le n\}$ is a collection of independent $[0,1]$-valued random variables sampled from a step graphon $G_n$ that converges to the graphon $G$ in the cut metric.

The study of mean-field \textit{heterogeneously} interacting particle systems on random graphs converging to a graphon emerged recently (\cite{BayraktarChakrabortyWu2020graphon,BetCoppiniNardi2020weakly,Lucon2020quenched,OliveiraReis2019interacting,Coppini2019long}).
There is also a growing number of applications of graphons in game theory; see e.g.\ \cite{Carmona2019stochastic,PariseOzdaglar2019graphon,CainesHuang2018graphon,GaoTchuendomCaines2020linear,VasalMishraVishwanath2020sequential,CainesHuang2020graphon} for the study of graphon mean field games in static and dynamic settings.
Among these, the only work on long time analysis is \cite{Coppini2019long}, which shows that the stochastic Kuramoto model defined on a sequence of graphs converging to a constant graphon behaves asymptotically as the mean-field limit (in general the manifold of McKean--Vlasov equations), up to an exponential time. 
More precisely, it is shown in \cite{Coppini2019long} that, in the subcritical regime, the trajectory of the empirical measure of $n$ oscillators ($[0,2\pi]$-valued diffusions), over $t \in [0,T_n]$ where $T_n = \exp(o(n))$, converges uniformly in probability to the same limit as in the Kuramoto model; while in the supercritical regime, with initial states close to a stable stationary solution, the trajectory of the empirical measure is uniformly close to the manifold of stable stationary solutions over $t \in [0,T_n]$.
This is in contrast to our case where the limiting system \eqref{eq:system} is heterogeneous and the stationary measure relies crucially on the underlying graphon (see Example \ref{eg:Gaussian}).

The study of classic mean-field \textit{homogeneously} interacting particle systems and the associated limiting system given by nonlinear processes, or equivalently, McKean--Vlasov equations, dates back to works of Boltzmann, Vlasov, McKean and others; see \cite{Sznitman1991,McKean1967propagation,Kolokoltsov2010} and references therein. 
Besides large population limits such as law of large numbers (LLN) and propagation of chaos (POC) on the finite time horizon, there have been an extensive collection of results on long time behaviors and Euler approximations for such systems (see e.g.\ \cite{BudhirajaFan2017,BudhirajaPalMajumder2015long,Veretennikov2006ergodic,BolleyGuillinVillani2007quantitative} and references therein) with suitable convexity assumptions.
In recent ten years, there has been a growing interest in the mean-field \textit{inhomogeneous} particle system, where the interaction between particles is governed by their own types and/or random graphs (see e.g.\ \cite{BayraktarWu2019mean,BarreDobsonOttobreZatorska2020fast,BhamidiBudhirajaWu2019weakly,Delattre2016,CoppiniDietertGiacomin2019law,BudhirajaMukherjeeWu2019supermarket,Delarue2017mean,LackerSoret2020case}) and the limiting system consists of countable McKean-Vlasov processes (as opposed to uncountable heterogeneous processes in graphon particle systems like \eqref{eq:system}).
Among these, the paper \cite{BarreDobsonOttobreZatorska2020fast} considers a collection of diffusions interacting through state-dependent fast evolving random graphs and shows a uniform-in-time averaging and LLN result. 
The interaction in \cite{BarreDobsonOttobreZatorska2020fast}, although not in the mean-field form, is close to be mean-field due to the averaging effect, and the limiting system is given by independent and identically distributed (i.i.d.) nonlinear diffusions.

The goal of this work is to study the long time behavior, including the stationary distribution and uniform-in-time convergence, of the graphon particle system \eqref{eq:system}, the approximating finite particle system \eqref{eq:system-n}, and its Euler discretization, under our standing assumptions stated in Section \ref{sec:model}, including convexity conditions \eqref{eq:c0} and \eqref{eq:kappa}.
We are in particular interested in the following two questions:
\begin{enumerate}[1.]
\item 
	Knowing that $G_n \to G$ as $n \to \infty$, does the long time behavior of $\{X_i^n : i=1,\dotsc,n\}$ approach that of $\{\Xlimit_u : u \in [0,1]\}$?
\item 
	Given a graphon $G$ and the associated $\{\Xlimit_u : u \in [0,1]\}$, could one choose a sequence of graphons $G_n$, finite particle systems $\{X_i^n : i=1,\dotsc,n\}$, and the Euler discretizations to approximate the long time behavior of $\{\Xlimit_u : u \in [0,1]\}$ such as the stationary distribution? 
	In order to control the approximation error, what would be the balance between the number of particles, the time to run processes, and the discretization step size?
\end{enumerate}
The first question is natural and its finite time analogue has been answered in the works \cite{BayraktarChakrabortyWu2020graphon,BetCoppiniNardi2020weakly,Lucon2020quenched,OliveiraReis2019interacting,Coppini2019long} mentioned above with (possibly) different model setups. 
The second question, opposite to the first one, is also (and actually more) important to us, as one may worry that the graphon particle system \eqref{eq:system}, consisting of uncountably many heterogeneous particles (or equivalently, their probability distributions), is not always tractable, for the either finite-time or long-time behavior (see Example \ref{eg:Gaussian}).
Answering the second question will suggest, for example to simulate the stationary distributions of \eqref{eq:system} using the Euler discretization of \eqref{eq:system-n}, and how to choose parameters in a balanced and efficient manner. 

Our first main result is the exponential ergodicity of the two systems \eqref{eq:system} and \eqref{eq:system-n}.
For the graphon particle system \eqref{eq:system}, we show that $\lawlimit_{u,t}$ converges to a limiting distribution as $t \to \infty$ for each $u \in [0,1]$ with exponentially small errors, and the limit is invariant with respect to the system evolution (Theorem \ref{thm:exponential-ergodicity}).
For the finite particle system \eqref{eq:system-n}, although the random vector $(X_i^n(t) : i=1,\dotsc,n)$ is not Markovian, it is Markovian conditioned on the interaction $\{\xi_{ij}^n : 1 \le i \le j \le n\}$.
Using this observation we prove the quenched (and hence annealed) exponential ergodicity and also show that the quenched limiting distribution is invariant (Theorem \ref{thm:exponential-ergodicity-n}).

The second main result is the uniform-in-time convergence of \eqref{eq:system-n} to \eqref{eq:system} when $G_n \to G$ in the cut metric. 
A uniform-in-time LLN for marginal distributions is established in Theorem \ref{thm:moment-convergence-uniform}, which says that the empirical measure $\emppre^n(t)$ of $n$ particles $\{X_i^n : i=1,\dotsc,n\}$ converges to the averaged distribution of a continuum of particles $\{\Xlimit_u : u \in [0,1]\}$.
The proof relies on a truncation and approximation argument for the drift coefficients (Lemma \ref{lem:truncation}), and certain generalization (Lemma \ref{lem:Wasserstein-empirical}) of bounds on the Wasserstein distance between i.i.d.\ random variables and their common distribution established in \cite{FournierGuillin2015rate}.
In Theorem \ref{thm:moment-convergence-uniform-Lipschitz}, we strengthen Theorem \ref{thm:moment-convergence-uniform} with additional assumptions on graphons, to further obtain a uniform-in-particle convergence, LLN, and POC, all uniformly in time.
Theorem \ref{thm:moment-convergence-uniform} or \ref{thm:moment-convergence-uniform-Lipschitz}, together with the exponential ergodicity, guarantees the interchange of limits of large $n$ and $t$ for empirical measures and the convergence of stationary measures (Corollaries \ref{cor:interchange}, \ref{cor:interchange-Lipschitz} and \ref{cor:interchange-infinity}).

Our last main result is on the tractable computation of the graphon particle system and its stationary distributions. 
We study the Euler scheme associated with \eqref{eq:system-n}, and obtain a uniform-in-time bound (Theorem \ref{thm:moment-convergence-uniform-Euler}) for errors arising from the discretization.
Under certain conditions on the graphons, explicit rates of convergence are obtained in Corollaries \ref{cor:uniform-approximation-Euler} and \ref{cor:uniform-approximation-Euler-Lipschitz}, of the empirical measure of the Euler discretization to the stationary distribution of the graphon particle system \eqref{eq:system}.
This in particular answers the second question above.

\subsection{Organization}

The paper is organized as follows.
In Section \ref{sec:model} we state the space of graphons, the standing assumptions, and well-posedness of systems \eqref{eq:system} and \eqref{eq:system-n}. 
In Section \ref{sec:system-graphon} we study the long time behavior of the graphon particle system \eqref{eq:system}.
The exponential ergodicity and stationary distribution are shown in Theorem \ref{thm:exponential-ergodicity}.
In Section \ref{sec:system-n} we analyze the long time behavior of the finite particle system \eqref{eq:system-n}.
The quenched and annealed exponential ergodicity and quenched stationary distribution are shown in Theorem \ref{thm:exponential-ergodicity-n}.
In Section \ref{sec:LLN} we study the uniform-in-time convergence of the system \eqref{eq:system-n} to the system \eqref{eq:system}.
LLN and POC are given in Theorems \ref{thm:moment-convergence-uniform} and \ref{thm:moment-convergence-uniform-Lipschitz}.
We also show the interchange of limits and the convergence of limiting distributions in Corollaries \ref{cor:interchange}, \ref{cor:interchange-Lipschitz} and \ref{cor:interchange-infinity}.
In Section \ref{sec:Euler} we introduce the Euler discretization.
The convergence that is uniform in time and the number of particles is shown in Theorem \ref{thm:moment-convergence-uniform-Euler}.
The rate of convergence in given in Corollaries \ref{cor:uniform-approximation-Euler} and \ref{cor:uniform-approximation-Euler-Lipschitz}.
Finally Section \ref{sec:pf} collects the proofs of results in Sections \ref{sec:system-graphon}--\ref{sec:Euler}.

We close this section by introducing some frequently used notation.

\subsection{Notation}
\label{sec:notation}
Given a Polish space $\Smb$, denote by $\Bmc(\Smb)$ the Borel $\sigma$-field. 
Let $\Pmc(\Smb)$ be the space of probability measures on $\Smb$ endowed with the topology of weak convergence.
For a measurable function $f \colon \Smb \to \Rmb$, let $\|f\|_\infty := \sup_{x \in \Smb} |f(x)|$.
Denote by $\Cmb([0,\infty):\Smb)$ (resp.\ $\Cmb([0,T]:\Smb)$ for $T \in (0,\infty)$) the space of continuous functions from $[0,\infty)$ (resp.\ $[0,T]$) to $\Smb$, endowed with the topology of uniform convergence on compacts (resp.\ uniform convergence).
We will use $C$ to denote various positive constants in the paper and $C_m$ to emphasize the dependence on some parameter $m$.
Their values may change from line to line.
The probability law of a random variable $X$ will be denoted by $\Lmc(X)$.
Expectations under $\Pmb$ will be denoted by $\Emb$.
To simplify the notation, we will usually write $\Emb[X^k]$ as $\Emb X^k$.
For vectors $x,y \in \Rd$, denote by $|x|$ the Euclidean norm and $x \cdot y$ the inner product.
Let $\Nmb_0 := \Nmb \cup \{0\}$.

Denote by $W_p$, $p \in \Nmb$, the Wasserstein-$p$ distance (cf.\ \cite[Chapter 6]{Villani2008optimal}) on $\Pmc(\Rmb^k)$, $k \in \Nmb$:
\begin{align*}
	W_p(m_1,m_2) & := \left( \inf_\pi \int_{\Rmb^k \times \Rmb^k} |x-y|^p \, \pi(dx\,dy) \right)^{1/p}, \quad m_1,m_2 \in \Pmc(\Rmb^k),
\end{align*}
where the infimum is taken over all probability measures $\pi \in \Pmc(\Rmb^k \times \Rmb^k)$ with marginals $m_1$ and $m_2$, that is, $\pi(\cdot \times \Rmb^k) = m_1(\cdot)$ and $\pi(\Rmb^k \times \cdot) = m_2(\cdot)$.
It is well-known that (cf.\ \cite[Remarks 6.5 and 6.6]{Villani2008optimal})
\begin{align}
	W_p(m_1,m_2) & \ge W_1(m_1,m_2) \notag \\
	& = \sup \left\{ \int_{\Rmb^k} \phi(x)\,m_1(dx) - \int_{\Rmb^k} \phi(x)\,m_2(dx) \,\Big|\, \phi \colon \Rmb^k \to \Rmb \mbox{ is } 1\mbox{-Lipschitz} \right\}. \label{eq:Wasserstein-duality}
\end{align}
Note that
\begin{equation}
	\label{eq:Wasserstein-joint}
	W_2^2(m, \mtil) \ge W_2^2(m_1, \mtil_1) + W_2^2(m_2, \mtil_2)
\end{equation}
for any $m, \mtil \in \Pmc(\Rmb^{k_1} \times \Rmb^{k_2})$ with marginals $m_1,\mtil_1 \in \Pmc(\Rmb^{k_1})$ and $m_2,\mtil_2 \in \Pmc(\Rmb^{k_2})$ respectively, where $k_1,k_2 \in \Nmb$.
In addition, if $m=m_1 \otimes m_2$ and $\mtil=\mtil_1 \otimes \mtil_2$ are product measures, then
\begin{equation}
	\label{eq:Wasserstein-product}
	W_2^2(m, \mtil) = W_2^2(m_1, \mtil_1) + W_2^2(m_2, \mtil_2).
\end{equation}

\section{Model and assumptions}
\label{sec:model}

We follow the notation used in \cite[Chapters 7 and 8]{Lovasz2012large}.
Let $I := [0,1]$.
Denote by $\Gmc$ the space of all bounded symmetric measurable functions $G \colon I \times I \to \Rmb$.
A graphon $G$ is an element of $\Gmc$ with $0 \le G \le 1$.
The cut norm on $\Gmc$ is defined by
$$\|G\|_\square := \sup_{S,T \in \Bmc(I)} \left| \int_{S \times T} G(u,v)\,du\,dv \right|,$$
and the corresponding cut metric and cut distance are defined by 
$$d_\square(G_1,G_2) := \|G_1-G_2\|_\square, \quad \delta_\square(G_1,G_2) := \inf_{\varphi \in S_I} \|G_1-G_2^\varphi\|_\square,$$
where $S_I$ denotes the set of all invertible measure preserving maps $I \to I$, and $G^\varphi(u,v):=G(\varphi(u),\varphi(v))$.

\begin{Remark}
	\label{rmk:graphon-convergence}
	We will also view a graphon $G$ as an operator from $L^\infty(I)$ to $L^1(I)$ with the operator norm
	$$\|G\|:=\|G\|_{\infty\to1}:=\sup_{\|g\|_\infty \le 1} \|Gg\|_1 = \sup_{\|g\|_\infty \le 1} \int_I \left| \int_I G(u,v) g(v) \, dv \right| du.$$	
	From \cite[Lemma 8.11]{Lovasz2012large} it follows that if $\|G_n-G\|_\square \to 0$ for a sequence of graphons $G_n$, then $\|G_n-G\| \to 0$.
\end{Remark}

Given a graphon $G \in \Gmc$ and a collection of initial distributions $\lawlimit(0) := (\lawlimit_u(0) \in \Pmc(\Rd) : u \in I)$,
recall the graphon particle system \eqref{eq:system} and the finite particle system \eqref{eq:system-n}.
The following assumptions will be made throughout the paper.

\vspace{.08in}
\noindent\textbf{Standing Assumptions:}
	\begin{itemize}
	\item 
		The map $I \ni u \mapsto \lawlimit_u(0):=\Lmc(\Xlimit_u(0)) \in \Pmc(\Rd)$ is measurable, and $\sup_{u \in I} \Emb |\Xlimit_u(0)|^4 < \infty$.
	\item 
		The drift functions $f$ and $b$ are Lipschiz with Lipschitz constant $K_f$ and $K_b$, respectively, namely
		\begin{align*}
			|f(x_1)-f(x_2)| & \le K_f |x_1-x_2|, \quad \forall\, x_1,x_2 \in \Rd, \\
			|b(x_1,y_1)-b(x_2,y_2)| & \le K_b (|x_1-x_2| +|y_1-y_2|), \quad \forall\, x_1,x_2,y_1,y_2 \in \Rd.
		\end{align*}
	\item 
		Dissipativity:
		There exists some $c_0 \in (0,\infty)$ such that
		\begin{equation}
			\label{eq:c0}
			(x_1-x_2) \cdot (f(x_1)-f(x_2)) \le -c_0 |x_1-x_2|^2, \quad \forall\, x_1,x_2 \in \Rd
		\end{equation}
		and
		\begin{equation}
			\label{eq:kappa}
			\kappa := c_0-2K_b > 0.
		\end{equation}
	\item
		$G_n \in \Gmc$ is a graphon and
		\begin{enumerate}[(i)]
		\item either $\xi_{ij}^n=G_n(\frac{i}{n},\frac{j}{n})$,
		\item or $\xi_{ij}^n=\xi_{ji}^n=\text{Bernoulli}(G_n(\frac{i}{n},\frac{j}{n}))$ independently for $1 \le i \le j \le n$, and independent of $\{\Xlimit_u(0),B_u : u \in I\}$.
		\end{enumerate}
	\end{itemize} 	

\begin{Remark}
	\phantomsection
	\label{rmk:linear-growth}
	\begin{enumerate}[(a)]
	\item
		The finite forth moment on $\Xlimit(0)$ is assumed to obtain Wasserstein-$2$ estimates in Sections \ref{sec:LLN} and \ref{sec:Euler}. A weaker condition such as a finite second moment on $\Xlimit(0)$ would be sufficient to establish exponential ergodicity properties in Sections \ref{sec:system-graphon} and \ref{sec:system-n}.
	\item 
		Clearly, $f$ and $b$ have linear growth, namely there exists some $C \in (0,\infty)$ such that $|f(x)| + |b(x,y)| \le C(1+|x|+|y|)$ for all $x,y \in \Rd$.
	\item
		A common example of $b$ and $f$ satisfying \eqref{eq:c0} and \eqref{eq:kappa} is linear (as in the study of linear quadratic graphon mean-field games in, e.g., \cite{GaoTchuendomCaines2020linear}) and mean-reverting:
		$$f(x) + b(x,y)=-c_1x+c_2y, \quad \text{for some } c_1 > c_2 > 0.$$
		In particular, the choice of $f(x)=-(c_1+c_2)x$ and $b(x,y)=c_2(x+y)$ satisfies \eqref{eq:c0} and \eqref{eq:kappa} since $c_0=c_1+c_2 > 2c_2=2K_b$.
	\end{enumerate}	
\end{Remark}

The following result gives well-posedness of systems \eqref{eq:system} and \eqref{eq:system-n}.

\begin{Proposition}
	\phantomsection
	\label{prop:well-posedness}
	\begin{enumerate}[(a)] 
	\item
		There exists a unique pathwise solution to \eqref{eq:system}.
		For every $T < \infty$, the map $I \ni u \mapsto \lawlimit_u \in \Pmc(\Cmb([0,T]:\Rd))$ is measurable and 
		\begin{equation*}
			\sup_{u \in I} \sup_{t \in [0,T]} \Emb \left[ |\Xlimit_u(t)|^4 \right] < \infty.
		\end{equation*}
	\item 
		There exists a unique pathwise solution to \eqref{eq:system-n}.
		Also for every $T < \infty$, 
		\begin{equation*}
			\max_{i=1,\dotsc,n} \sup_{t \in [0,T]} \Emb \left[ |X_i^n(t)|^4 \right] < \infty.
		\end{equation*}
	\end{enumerate}	
\end{Proposition}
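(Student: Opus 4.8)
The plan is to handle part (b) first, by a classical fixed-point argument for finite-dimensional SDEs with Lipschitz coefficients, and then part (a) by a McKean--Vlasov fixed-point argument carried out over the family of measure flows indexed by $u \in I$.

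For \eqref{eq:system-n}, fix a realization of the weights $\{\xi_{ij}^n\}$, which under our assumptions is independent of $\{\Xlimit_u(0),B_u\}$. The map $\Rmb^{nd} \ni (x_1,\dotsc,x_n) \mapsto \bigl( f(x_i) + \tfrac1n \sum_{j=1}^n \xi_{ij}^n b(x_i,x_j) \bigr)_{i=1}^n$ is globally Lipschitz because $f$ and $b$ are Lipschitz and $0 \le \xi_{ij}^n \le 1$; hence Picard iteration gives a unique strong (pathwise) solution on $[0,\infty)$ for a.e.\ realization of $\{\xi_{ij}^n\}$, and measurability in $\{\xi_{ij}^n\}$ yields a pathwise solution on the product probability space. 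For the fourth-moment bound I would apply It\^{o}'s formula to $|X_i^n(t)|^4$, use the linear growth of $f$ and $b$ (Remark \ref{rmk:linear-growth}(b)) to bound the drift and the It\^{o} correction terms by $C(1 + \max_j \Emb|X_j^n(t)|^4)$ after localizing at $\tau_R := \inf\{t : \max_i |X_i^n(t)| \ge R\}$, then conclude by Gr\"onwall and $R \to \infty$ via Fatou, using $\sup_u \Emb|\Xlimit_u(0)|^4 < \infty$.

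For \eqref{eq:system}, fix $T < \infty$ and work on the set $\Smc_T$ of measurable maps $I \ni u \mapsto (\mu_{u,t})_{t \in [0,T]} \in \Cmb([0,T]:\Pmc(\Rd))$ with $\sup_{u \in I} \sup_{t \in [0,T]} \int_{\Rd} |x|^4 \, \mu_{u,t}(dx) \le M$, where $M$ is fixed by the a priori estimate below, equipped with $\rho_\lambda(\mu,\nu) := \sup_{u \in I} \sup_{t \in [0,T]} e^{-\lambda t} W_2(\mu_{u,t},\nu_{u,t})$ for $\lambda$ large. Given $\mu \in \Smc_T$, the SDE
\begin{align*}
	Y_u(t) & = \Xlimit_u(0) + \int_0^t \Bigl( f(Y_u(s)) + \int_0^1 \int_{\Rd} b(Y_u(s),x) G(u,v)\, \mu_{v,s}(dx)\, dv \Bigr) ds + \sigma B_u(t)
\end{align*}
has a coefficient that is globally Lipschitz in $Y_u(s)$ with constant $K_f + K_b$ (using $G \le 1$), hence admits a unique strong solution, and I set $\Phi(\mu)_{u,t} := \Lmc(Y_u(t))$. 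Measurability of $u \mapsto \Phi(\mu)_u$ follows from that of $u \mapsto \lawlimit_u(0)$ and of $u \mapsto \int_0^1 G(u,v)\,\mu_{v,s}(dx)\,dv$, preserved through Picard iteration; the It\^{o}/Gr\"onwall estimate of part (b), uniform in $u$ since the initial moments and growth constants are, shows that for $M$ large enough (depending on $T$, $\sup_u \Emb|\Xlimit_u(0)|^4$, and the constants) $\Phi$ maps $\Smc_T$ into itself. For the contraction, let $Y_u,Z_u$ solve the SDE for $\mu,\nu \in \Smc_T$ driven by the same $\Xlimit_u(0)$ and $B_u$; subtracting, using the Lipschitz bound on $f$, the split of $b(Y_u(s),x)\mu_{v,s}(dx) - b(Z_u(s),x)\nu_{v,s}(dx)$ into a term bounded by $K_b|Y_u(s)-Z_u(s)|$ and one bounded by $K_b W_1(\mu_{v,s},\nu_{v,s}) \le K_b W_2(\mu_{v,s},\nu_{v,s})$ (via the duality \eqref{eq:Wasserstein-duality}, since $b(Z_u(s),\cdot)$ is $K_b$-Lipschitz), Young's inequality and $G \le 1$, I obtain
\begin{align*}
	\frac{d}{dt}\, \Emb|Y_u(t)-Z_u(t)|^2 & \le C\, \Emb|Y_u(t)-Z_u(t)|^2 + C \sup_{v \in I} W_2^2(\mu_{v,t},\nu_{v,t}).
\end{align*}
Since $W_2^2(\Phi(\mu)_{u,t},\Phi(\nu)_{u,t}) \le \Emb|Y_u(t)-Z_u(t)|^2$, a Gr\"onwall argument against the weighted norm gives $\rho_\lambda(\Phi(\mu),\Phi(\nu)) \le \tfrac{C}{\lambda-C}\,\rho_\lambda(\mu,\nu)$, a contraction for $\lambda$ large. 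The Banach fixed-point theorem then produces a unique fixed point $\mu \in \Smc_T$, reconstructing the corresponding $Y_u$ gives the unique pathwise solution of \eqref{eq:system} on $[0,T]$, and letting $T \to \infty$ with pathwise uniqueness patches these into a solution on $[0,\infty)$. Running the contraction instead in $\Pmc(\Cmb([0,T]:\Rd))$, or noting the measurable dependence of $Y_u$ on $u$, gives measurability of $u \mapsto \lawlimit_u \in \Pmc(\Cmb([0,T]:\Rd))$, and the a priori estimate gives $\sup_u \sup_{t \le T} \Emb|\Xlimit_u(t)|^4 < \infty$.

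I expect the main obstacle to be not any single estimate but the measurability bookkeeping in $u$: one must check that the Picard iterates $u \mapsto \mu^{(k)}_u$ stay jointly measurable, that $\int_0^1 G(u,v)\,\mu_{v,s}(dx)\,dv$ is well-defined and measurable in $u$ given measurability of $v \mapsto \mu_{v,s}$, and that these properties pass to the limit; everything else is a routine Lipschitz-SDE plus Gr\"onwall computation, and the dissipativity conditions \eqref{eq:c0}--\eqref{eq:kappa} play no role here, entering only the long-time analysis of the later sections.
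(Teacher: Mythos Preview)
Your proposal is correct and follows exactly the standard route the paper has in mind: the paper actually omits the proof entirely, stating only that it is standard and citing \cite{Sznitman1991} and \cite{BayraktarChakrabortyWu2020graphon} for part (a) and \cite[Theorems 5.2.5 and 5.2.9]{KaratzasShreve1991brownian} for part (b). Your sketch --- Picard iteration for the finite Lipschitz system in (b), and a McKean--Vlasov contraction on the space of $u$-indexed measure flows in (a), with the measurability-in-$u$ bookkeeping you flag --- is precisely the content behind those citations, and your closing remark that dissipativity plays no role here is also in line with the paper.
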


The proof of Proposition \ref{prop:well-posedness} is standard (see e.g.\ \cite{Sznitman1991} and \cite{BayraktarChakrabortyWu2020graphon} for part (a), and \cite[Theorems 5.2.5 and 5.2.9]{KaratzasShreve1991brownian} for part (b)) and hence is omitted.

\section{Exponential ergodicity of the graphon particle system}
\label{sec:system-graphon}

In this section we show the exponential ergodicity for the graphon particle system \eqref{eq:system}.

First recall the standing assumptions in Section \ref{sec:model}.
The following result guarantees that Proposition \ref{prop:well-posedness}(a) holds uniformly in time.

\begin{Proposition}
	\label{prop:finite-moment-uniform}
	The system \eqref{eq:system} has finite fourth moments uniformly in time, namely
	\begin{equation*}
		\sup_{u \in I} \sup_{t \ge 0} \Emb \left[ |\Xlimit_u(t)|^4 \right] < \infty.
	\end{equation*}
\end{Proposition}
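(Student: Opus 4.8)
The plan is to combine the dissipativity \eqref{eq:c0} with the spectral gap $\kappa = c_0 - 2K_b > 0$ from \eqref{eq:kappa} in a Lyapunov estimate, carried out in two stages: first I would control the \emph{aggregate} second moment $m_2(t) := \int_0^1 \Emb|\Xlimit_v(t)|^2\,dv$ uniformly in $t$, and then bootstrap this to a uniform bound on $\Emb|\Xlimit_u(t)|^4$ for each $u \in I$. Proposition \ref{prop:well-posedness}(a) already supplies finite fourth moments on every interval $[0,T]$, which makes the It\^o computations below legitimate (after a routine localization, since only four moments are available). As a preliminary I would record pointwise drift estimates: writing $\beta_u(s,x) := f(x) + \int_0^1\int_{\Rmb^d} b(x,y)\,G(u,v)\,\lawlimit_{v,s}(dy)\,dv$, applying \eqref{eq:c0} with $x_2=0$ and the Lipschitz bound on $b$ with $(x_2,y_2)=(0,0)$ gives $x\cdot f(x) \le -c_0|x|^2 + |f(0)|\,|x|$ and $|b(x,y)| \le |b(0,0)| + K_b(|x|+|y|)$, so using $0 \le G \le 1$,
\[
	x\cdot\beta_u(s,x) \;\le\; -(c_0-K_b)|x|^2 + \Bigl(|f(0)|+|b(0,0)|+K_b\!\int_0^1\!\Emb|\Xlimit_v(s)|\,dv\Bigr)|x|,
\]
where $c_0 - K_b = \kappa + K_b > 0$ and $\int_0^1\Emb|\Xlimit_v(s)|\,dv \le \sqrt{m_2(s)}$ by Jensen's inequality.

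For the second moment I would apply It\^o's formula to $|\Xlimit_u(\cdot)|^2$ and take expectations (the stochastic integral being a martingale by Proposition \ref{prop:well-posedness}(a)), obtaining $\Emb|\Xlimit_u(t)|^2 = \Emb|\Xlimit_u(0)|^2 + \int_0^t(2\,\Emb[\Xlimit_u(s)\cdot\beta_u(s,\Xlimit_u(s))] + \operatorname{tr}(\sigma\sigma^\top))\,ds$. Inserting the drift estimate and Young's inequality — splitting $2(|f(0)|+|b(0,0)|)|x| \le \kappa|x|^2 + C$ and $2K_b|x|\sqrt{m_2(s)} \le K_b|x|^2 + K_b\,m_2(s)$ — the $|x|^2$-coefficients combine to $-2(c_0-K_b)+\kappa+K_b = -(\kappa+K_b)$, so
\[
	\Emb|\Xlimit_u(t)|^2 \le \Emb|\Xlimit_u(0)|^2 + \int_0^t\!\bigl(-(\kappa+K_b)\Emb|\Xlimit_u(s)|^2 + K_b\,m_2(s) + C_1\bigr)ds .
\]
Integrating this over $u \in [0,1]$ (Fubini, using the measurability of $u\mapsto\lawlimit_u$), the cross term $-K_b\Emb|\Xlimit_u(s)|^2$ integrates to $-K_b\,m_2(s)$ and cancels the interaction contribution $K_b\,m_2(s)$, leaving the \emph{closed} inequality $m_2(t) \le m_2(0) + \int_0^t(-\kappa\,m_2(s)+C_1)\,ds$. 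Since $m_2$ is continuous and $\kappa>0$, a standard comparison argument gives $m_2(t) \le \max\{m_2(0),\,C_1/\kappa\} =: M_2 < \infty$ for all $t \ge 0$ (and $m_2(0)<\infty$ by the standing assumption on $\Xlimit(0)$).

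With $m_2 \le M_2$, the bracket in the drift estimate is bounded by a constant $L$, so $x\cdot\beta_u(s,x) \le -(c_0-K_b)|x|^2 + L|x|$ for all $s$. I would then apply It\^o's formula to $e^{\lambda t}|\Xlimit_u(t)|^4$ with $\lambda := c_0-K_b > 0$, localizing by $\tau_R := \inf\{t: |\Xlimit_u(t)|\ge R\}$ so the stochastic integral is a genuine martingale. The second-order term of $|x|^4$ is $\le c_\sigma|x|^2$ with $c_\sigma := 4\|\sigma\|^2 + 2\operatorname{tr}(\sigma\sigma^\top)$, and $4|x|^2(x\cdot\beta_u(s,x)) \le -4(c_0-K_b)|x|^4 + 4L|x|^3$; by Young, $4L|x|^3 + c_\sigma|x|^2 \le (c_0-K_b)|x|^4 + C_2$, so the drift of $e^{\lambda t}|\Xlimit_u(t)|^4$ is at most $e^{\lambda t}(\lambda - 3(c_0-K_b))|\Xlimit_u(t)|^4 + e^{\lambda t}C_2 \le e^{\lambda t}C_2$. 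Taking expectations of the localized identity, bounding $\int_0^{t\wedge\tau_R}e^{\lambda s}C_2\,ds \le (C_2/\lambda)e^{\lambda t}$, letting $R\to\infty$ via Fatou, and dividing by $e^{\lambda t}$ would yield
\[
	\Emb|\Xlimit_u(t)|^4 \le e^{-\lambda t}\Emb|\Xlimit_u(0)|^4 + \frac{C_2}{\lambda} \le \sup_{v\in I}\Emb|\Xlimit_v(0)|^4 + \frac{C_2}{\lambda},
\]
which is finite and independent of $u$ and $t$, proving the claim.

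The one genuine obstacle is the coupling: the drift of the particle at $u$ depends on the laws of \emph{all} particles, so no single $\Xlimit_u$ can be estimated in isolation. This is exactly what the two-stage scheme is designed to overcome — the interaction enters only through the first moments $\int_0^1\Emb|\Xlimit_v(s)|\,dv$, and averaging the pointwise estimate over $u$ closes the inequality for $m_2$ precisely because the effective rate turns out to be $\kappa = c_0 - 2K_b > 0$; once $\sup_t m_2(t)<\infty$ is in hand, the fourth-moment estimate decouples completely. Everything else (justifying It\^o and Fubini, continuity of $m_2$, and the localization forced by having only four moments of the initial data) is routine.
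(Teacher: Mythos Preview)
Your argument is correct. Both your proof and the paper's proof hinge on the same key idea: the coupling through the graphon is controlled by averaging over $u\in I$, which closes the inequality with effective rate $\kappa=c_0-2K_b>0$. The difference is the order of operations. The paper works directly at the fourth-moment level: it applies It\^o to $|\Xlimit_u|^4$, obtains a differential inequality for $\alpha_u(t):=\Emb|\Xlimit_u(t)|^4$ that involves the aggregate $\alpha(t):=\int_I\alpha_v(t)\,dv$, integrates over $u$ to close for $\alpha$, and then feeds the bound on $\alpha$ back to control each $\alpha_u$; the paper's ODE comparison Lemma \ref{lem:ODE} is invoked twice. You instead close the coupling at the \emph{second}-moment level (where the algebra is lighter), observe that the interaction enters the drift only through $\int_I\Emb|\Xlimit_v(s)|\,dv\le\sqrt{m_2(s)}$, and then---once $\sup_t m_2(t)<\infty$ is secured---the fourth-moment estimate \emph{fully decouples} and reduces to a standard single-diffusion Lyapunov argument with the exponential weight $e^{\lambda t}$. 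Your route is a bit more modular (and would extend to higher moments without repeating the averaging step), while the paper's is a single pass at the target moment order; both are complete.
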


Next we introduce some notations before stating the exponential ergodicity property.
For $\eta := (\eta_u : u \in I) \in [\Pmc(\Rmb^d)]^I$ with $\sup_{u \in I} \int_{\Rmb^d} |x|^4\,\eta_u(dx) < \infty$, consider the system $\Ybar^{\eta} = (\Ybar_u^{\eta} : u \in I)$ given by
\begin{align}
	\Ybar_u^{\eta}(t) & = \Ybar_u^{\eta}(0) + \int_0^t \left( f(\Ybar_u^{\eta}(s)) + \int_I \int_{\Rmb^d} b(\Ybar_u^{\eta}(s),x)G(u,v) \,\lawlimit_{v,s}^{\eta}(dx)\,dv\right)ds \label{eq:Ybar_eta} \\
	& \quad + \sigma B_u(t), \quad \lawlimit_{u,t}^{\eta}=\Lmc(\Ybar_u^{\eta}(t)), \quad u \in I, \notag
\end{align}	
where $(\Ybar^{\eta}_u(0) : u \in I)$ are mutually independent and also independent of $\{B_u : u \in I\}$ with $\Lmc(\Ybar^{\eta}_u(0))=\eta_u$.
Note that $\Ybar^{\eta}$ is well-defined and $\sup_{u \in I} \sup_{t \ge 0} \int_{\Rmb^d} |x|^4\,\lawlimit_{u,t}^{\eta}(dx) < \infty$ by Proposition \ref{prop:well-posedness}(a).
Denote by $P_t$ the associated Markov semigroup:
\begin{equation}
	\label{eq:Pt}
	P_t \eta := \Lmc(\Ybar^{\eta}(t)), \quad t \ge 0.
\end{equation}

The following theorem shows that $\lawlimit_{u,t}$ (and its average) converges exponentially fast to the limiting distribution, which is also invariant with respect to $P_t$.

\begin{Theorem}
	\phantomsection
	\label{thm:exponential-ergodicity}
	\begin{enumerate}[(a)]
	\item 
		There exists a unique collection of probability measures $(\lawlimit_{u,\infty} : u \in I)$ such that
		\begin{equation}
			\label{eq:exponential-ergodicity}
			\sup_{u \in I} W_2(\lawlimit_{u,t},\lawlimit_{u,\infty}) \le \sqrt{4\kappa_1\frac{c_0-K_b}{\kappa}} e^{-\kappa t/2}, \quad t \ge 0,
		\end{equation}
		and hence
		\begin{equation}
			\label{eq:exponential-ergodicity-average}
			W_2(\lawlimitmean(t),\lawlimitmean(\infty)) \le \sqrt{4\kappa_1\frac{c_0-K_b}{\kappa}} e^{-\kappa t/2}, \quad t \ge 0,
		\end{equation}
		where $\kappa_1 := \sup_{u \in I} \sup_{t \ge 0} \Emb |\Xlimit_u(t)|^2$ and the averaged measures $\lawlimitmean(t)$ and $\lawlimitmean(\infty)$ are defined as
		\begin{equation}
			\label{eq:lawlimitmean}
			\lawlimitmean(t) := \int_I \lawlimit_{u,t} \, du, \quad \lawlimitmean(\infty) := \int_I \lawlimit_{u,\infty} \, du.
		\end{equation}
	\item
		The collection $\lawlimit(\infty):=(\lawlimit_{u,\infty} : u \in I)$ is invariant with respect to the Markov semigroup $P_t$ defined in \eqref{eq:Pt}, namely
		\begin{equation*}
			P_t\lawlimit(\infty) = \lawlimit(\infty), \quad t \ge 0.
		\end{equation*}
	\item
		There exists some $C \in (0,\infty)$ such that
		\begin{equation*}
			\sup_{t \in [0,\infty]} W_2(\lawlimit_{u_1,t},\lawlimit_{u_2,t}) \le \max \left\{ W_2(\lawlimit_{u_1,0},\lawlimit_{u_2,0}), C \int_I |G(u_1,v)-G(u_2,v)| \,dv \right\}, \quad u_1,u_2 \in I.
		\end{equation*}
	\end{enumerate}	
\end{Theorem}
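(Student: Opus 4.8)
The plan is to prove all three parts by a synchronous coupling argument based on the dissipativity conditions \eqref{eq:c0} and \eqref{eq:kappa}. The key computation, which underlies the entire theorem, is the following: given two copies of the dynamics \eqref{eq:Ybar_eta} driven by the \emph{same} Brownian motions $\{B_u\}$ but with possibly different initial laws (and, for part (c), with different graphon rows $G(u_1,\cdot)$ vs.\ $G(u_2,\cdot)$), subtract the two integral equations, apply It\^o's formula to $|\Ybar_u(t)-\Ytil_u(t)|^2$ (the stochastic integral drops out since $\sigma$ is shared), and use \eqref{eq:c0} on the $f$-term and the Lipschitz bound $K_b$ on the $b$-term. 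The cross terms involving $\int_I \int_{\Rd}\bigl(b(\Ybar_u(s),x)-b(\Ytil_u(s),y)\bigr)G(u,v)(\cdots)$ get split into a part controlled by $2K_b|\Ybar_u(s)-\Ytil_u(s)|^2$ (contributing to the contraction rate $\kappa=c_0-2K_b$) and a part controlled by $K_b \int_I G(u,v)\,W_2(\text{law}_v^{(1)},\text{law}_v^{(2)})\,dv$ plus possibly $\int_I|G(u_1,v)-G(u_2,v)|\,dv$ times a moment bound.

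First I would establish the \textbf{a priori contraction estimate}: for any two initializations, $\frac{d}{dt}\sup_u \Emb|\Ybar_u(t)-\Ytil_u(t)|^2 \le -2\kappa \sup_u\Emb|\Ybar_u(t)-\Ytil_u(t)|^2$ after bounding $\int_I G(u,v)\,du\,dv$-type terms by $\sup_v \Emb|\Ybar_v-\Ytil_v|^2$ (since $G\le 1$ and $\int_I dv=1$) and using $W_2^2(\text{law}_u,\widetilde{\text{law}}_u)\le\Emb|\Ybar_u-\Ytil_u|^2$ for the coupled processes; Gr\"onwall then gives $\sup_u W_2^2(\lawlimit_{u,t},\lawlimit_{u,t}^\eta)\le e^{-2\kappa t}\sup_u W_2^2(\lawlimit_{u,0},\eta_u)$ after optimizing over couplings of the initial data. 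For part (a), I would then show $(\lawlimit_{u,t})_{t\ge0}$ is Cauchy in $W_2$ uniformly in $u$: using the semigroup/flow property, $\sup_u W_2(\lawlimit_{u,t+h},\lawlimit_{u,t})\le e^{-\kappa t}\sup_u W_2(\lawlimit_{u,h},\lawlimit_{u,0})$, and the right side is bounded by $Ce^{-\kappa t}$ since $\sup_u\sup_t\Emb|\Xlimit_u(t)|^2\le\kappa_1<\infty$ by Proposition \ref{prop:finite-moment-uniform}; completeness of $\Pmc_2(\Rd)$ gives the limit $\lawlimit_{u,\infty}$, uniqueness follows from the contraction, and the explicit constant $\sqrt{4\kappa_1\frac{c_0-K_b}{\kappa}}$ comes from tracking the $W_2$ bound on $\sup_u W_2(\lawlimit_{u,0},\lawlimit_{u,\infty})\le 2\sqrt{\kappa_1}$ through a sharpened version of the Gr\"onwall step (keeping the $c_0-K_b$ factor rather than crudely bounding by $c_0$). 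Estimate \eqref{eq:exponential-ergodicity-average} is then immediate from convexity of $W_2^2$: $W_2^2(\lawlimitmean(t),\lawlimitmean(\infty))\le\int_I W_2^2(\lawlimit_{u,t},\lawlimit_{u,\infty})\,du\le\sup_u W_2^2(\lawlimit_{u,t},\lawlimit_{u,\infty})$.

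For part (b), I would argue that $P_t$ is $W_2$-continuous (again by the contraction estimate applied to $\Ybar^\eta$ vs.\ $\Ybar^{\eta'}$) and that $P_{t}\lawlimit(\infty)=\lim_{s\to\infty}P_t P_s \lawlimit(0)=\lim_{s\to\infty}P_{t+s}\lawlimit(0)=\lawlimit(\infty)$, using the flow property $P_t P_s=P_{t+s}$ for the (time-homogeneous) system and the fact that $P_s\lawlimit(0)=\lawlimit(s)\to\lawlimit(\infty)$ in $W_2$, uniformly in $u$. For part (c), I would run the same synchronous coupling but now with two \emph{different particles} $u_1,u_2$ of the \emph{same} system \eqref{eq:system} (so $\lawlimit_{v,s}$ is the common, already-fixed flow, not a coupled copy): It\^o on $|\Xlimit_{u_1}(t)-\Xlimit_{u_2}(t)|^2$ yields $\frac{d}{dt}\Emb|\cdots|^2\le -2\kappa\,\Emb|\cdots|^2 + 2K_b\Emb|\cdots|\cdot\bigl(\sup_v(\Emb|x|^2)^{1/2}\bigr)\int_I|G(u_1,v)-G(u_2,v)|\,dv$; using Young's inequality to absorb part of the cross term gives an ODE of the form $\phi'\le-\kappa\phi + C'(\int_I|G(u_1,v)-G(u_2,v)|\,dv)^2$, whose solution stays below $\max\{\phi(0), \frac{C'}{\kappa}(\int_I|G(u_1,v)-G(u_2,v)|\,dv)^2\}$ for all $t$ (a standard comparison/invariant-region argument), and taking square roots gives the claimed bound with $C=\sqrt{C'/\kappa}$; the $t=\infty$ case follows by passing to the limit using part (a). The main obstacle is bookkeeping the constants so that the \emph{precise} factor $\sqrt{4\kappa_1\frac{c_0-K_b}{\kappa}}$ emerges — this requires not collapsing the two distinct roles of $K_b$ (self-interaction, giving $\kappa$, versus cross-interaction with the other particle's fixed law) and carefully splitting the $b$-difference as $b(X,x)-b(Y,y)=[b(X,x)-b(Y,x)]+[b(Y,x)-b(Y,y)]$ so the first bracket feeds the contraction and the second the remainder; everything else is routine It\^o calculus and Gr\"onwall.
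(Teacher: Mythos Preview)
Your proposal is correct and follows essentially the same architecture as the paper: synchronous coupling with shared Brownian motions, It\^o on the squared difference, then \eqref{eq:c0} on the $f$-part and the Lipschitz bound on the $b$-part, closing with a Gr\"onwall/ODE-comparison step. Parts (b) and (c) match the paper's argument almost verbatim (the paper phrases (b) via the same $P_t P_s = P_{t+s}$ triangle-inequality chain, and for (c) uses its ODE comparison Lemma~\ref{lem:ODE} exactly where you invoke the invariant-region argument).

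The one methodological difference worth flagging is in how the contraction for part (a) is closed. You propose to differentiate $\Phi(t):=\sup_u \Emb|\Ybar_u^\eta(t)-\Ybar_u^{\etatil}(t)|^2$ directly and obtain $\Phi'\le -2\kappa\Phi$, hence $\Phi(t)\le e^{-2\kappa t}\Phi(0)$. The paper instead works with $e^{\kappa t}\phi_u(t)$ for each fixed $u$, applies Lemma~\ref{lem:ODE} to get a bound involving $\sup_{s\le t}e^{\kappa s}\sup_v W_2^2(\lawlimit_{v,s}^\eta,\lawlimit_{v,s}^{\etatil})$, then takes the supremum over $u$ and $t$ and solves the resulting self-referential inequality; this is what produces the prefactor $\frac{c_0-K_b}{\kappa}$ and the rate $e^{-\kappa t}$ (rather than $e^{-2\kappa t}$) in their key contraction estimate \eqref{eq:exponential-ergodicity-claim}. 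Your route, once the differentiation of the supremum is made rigorous (e.g.\ via the integral form $\phi_u(t)\le e^{-(2c_0-3K_b)t}\phi_u(0)+K_b\int_0^t e^{-(2c_0-3K_b)(t-s)}\Phi(s)\,ds$, then sup over $u$, then a linear Gr\"onwall), actually yields the sharper bound $\sup_u W_2(\lawlimit_{u,t},\lawlimit_{u,\infty})\le 2\sqrt{\kappa_1}\,e^{-\kappa t}$, which implies \eqref{eq:exponential-ergodicity} since $\frac{c_0-K_b}{\kappa}\ge 1$ and $e^{-\kappa t}\le e^{-\kappa t/2}$. So your closing remark about ``tracking'' the constant $\sqrt{4\kappa_1\frac{c_0-K_b}{\kappa}}$ through a ``sharpened Gr\"onwall'' is backwards: the paper's constant is an artifact of its bootstrap via Lemma~\ref{lem:ODE}, and your direct argument does not produce it but rather improves on it.
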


Proofs of Proposition \ref{prop:finite-moment-uniform} and Theorem \ref{thm:exponential-ergodicity} are given in Section \ref{sec:pf-system-graphon}.

An immediate consequence of Theorem \ref{thm:exponential-ergodicity}(c) is that the marginal distribution is (Lipschitz) continuous as long as the initial distribution and the graphon are so.

\begin{Condition}
	\label{cond:G-continuous}
	There exists a finite collection of intervals $\{I_i : i=1,\dotsc,N\}$ for some $N \in \Nmb$, such that $\cup_{i=1}^N I_i = I$ and for each $i \in \{1,\dotsc,N\}$:
	\begin{enumerate}[(a)]
	\item The map $I_i \ni u \mapsto \lawlimit_u(0) \in \Pmc(\Rd)$ is continuous with respect to the $W_2$ metric.
	\item For each $u \in I_i$, there exists a subset $A_u \subset I$ such that $\lambda_I(A_u)=0$ and $G(u,v)$ is continuous at $(u,v) \in I \times I$ for each $v \in I \setminus A_u$, where $\lambda_I$ denotes the Lebesgue measure on $I$.
	\end{enumerate}
\end{Condition} 

\begin{Condition}
	\label{cond:G-Lipschitz}
	There exist some $K_G \in (0,\infty)$ and a finite collection of intervals $\{I_i : i=1,\dotsc,N\}$ for some $N \in \Nmb$, such that $\cup_{i=1}^N I_i = I$ and
	\begin{align*}
		W_2(\mu_{u_1}(0),\mu_{u_2}(0)) & \le K_G |u_1-u_2|, \quad u_1,u_2 \in I_i, \quad i \in \{1,\dotsc,N\}, \\
		|G(u_1,v_1)-G(u_2,v_2)| & \le K_G (|u_1-u_2|+|v_1-v_2|), \: (u_1,v_1),(u_2,v_2) \in I_i \times I_j, \: i,j \in \{1,\dotsc,N\}.
	\end{align*}
\end{Condition}

\begin{Corollary}
	\phantomsection
	\label{cor:continuity-Lipschitz-special}
	\begin{enumerate}[(a)]
	\item 
		Suppose Condition \ref{cond:G-continuous} holds. 
		Then for each $i \in \{1,\dotsc,N\}$, $\sup_{t \in [0,\infty]} W_2(\lawlimit_{u_1,t},\lawlimit_{u_2,t}) \to 0$ whenever $u_1 \to u_2$ in $I_i$.
	\item
		Suppose Condition \ref{cond:G-Lipschitz} holds. 
		Then there exists some $C \in (0,\infty)$ such that 
		$$\sup_{t \in [0,\infty]} W_2(\lawlimit_{u_1,t},\lawlimit_{u_2,t}) \le C|u_1-u_2|$$ 
		whenever $u_1,u_2 \in I_i$ for some $i \in \{1,\dotsc,N\}$.
	\end{enumerate}	
\end{Corollary}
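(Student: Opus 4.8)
The plan is to obtain both statements as direct consequences of the contraction estimate in Theorem \ref{thm:exponential-ergodicity}(c): since that result bounds $\sup_{t \in [0,\infty]} W_2(\lawlimit_{u_1,t},\lawlimit_{u_2,t})$ by $\max\{W_2(\lawlimit_{u_1,0},\lawlimit_{u_2,0}),\, C \int_I |G(u_1,v)-G(u_2,v)|\,dv\}$, it is enough in each case to control the two quantities inside the maximum, namely the distance between the initial marginals and the $L^1(I)$-distance between the corresponding slices of the graphon.

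For part (b) I would fix $i \in \{1,\dotsc,N\}$ and $u_1,u_2 \in I_i$. Condition \ref{cond:G-Lipschitz} gives $W_2(\lawlimit_{u_1,0},\lawlimit_{u_2,0}) \le K_G|u_1-u_2|$ directly, and specializing its second inequality to $v_1=v_2=v$ yields $|G(u_1,v)-G(u_2,v)| \le K_G|u_1-u_2|$ for a.e.\ $v \in I$, hence $\int_I |G(u_1,v)-G(u_2,v)|\,dv \le K_G|u_1-u_2|$. Substituting both bounds into Theorem \ref{thm:exponential-ergodicity}(c) gives the claim with constant $\max\{1,C\}K_G$.

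For part (a) I would fix $i$, fix $u_2 \in I_i$, and take an arbitrary sequence $u_1^{(k)} \to u_2$ in $I_i$. Condition \ref{cond:G-continuous}(a) yields $W_2(\lawlimit_{u_1^{(k)},0},\lawlimit_{u_2,0}) \to 0$. For the graphon term, Condition \ref{cond:G-continuous}(b) furnishes a Lebesgue-null set $A_{u_2} \subset I$ such that $G$ is continuous at $(u_2,v)$ for every $v \in I \setminus A_{u_2}$; in particular $G(u_1^{(k)},v) \to G(u_2,v)$ for each such $v$. Since $0 \le G \le 1$, the integrands $|G(u_1^{(k)},v)-G(u_2,v)|$ are uniformly bounded by $1$ and converge to $0$ for a.e.\ $v$, so dominated convergence gives $\int_I |G(u_1^{(k)},v)-G(u_2,v)|\,dv \to 0$. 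Both terms in the maximum therefore vanish along the sequence, and since the sequence was arbitrary, $\sup_{t \in [0,\infty]} W_2(\lawlimit_{u_1,t},\lawlimit_{u_2,t}) \to 0$ as $u_1 \to u_2$ in $I_i$.

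The whole argument is short once Theorem \ref{thm:exponential-ergodicity}(c) is in hand; the only step requiring a little care is the $L^1(I)$-convergence of the graphon slices $G(u_1,\cdot) \to G(u_2,\cdot)$ in part (a), where I must pass from the pointwise (joint-)continuity hypothesis, valid for $v$ outside a null set, to convergence of the integrals via the uniform bound $G \le 1$ and dominated convergence. I expect that to be the main (and essentially only) obstacle.
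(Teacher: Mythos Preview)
Your proposal is correct and follows exactly the approach the paper intends: the paper's proof reads in its entirety ``This is immediate from Theorem \ref{thm:exponential-ergodicity}(c),'' and you have supplied precisely the routine verification that the two terms in the maximum vanish (resp.\ are Lipschitz) under Condition \ref{cond:G-continuous} (resp.\ Condition \ref{cond:G-Lipschitz}). The dominated convergence step you flagged is handled correctly.
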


\begin{proof}
	This is immediate from Theorem \ref{thm:exponential-ergodicity}(c).	
\end{proof}

We note that, as illustrated through the following example of Gaussian processes with linear coefficients, the graph structure plays a crucial role in the long-time behavior of the system, and hence the stationary measure, due to the heterogeneity of the system, is not necessarily tractable.
This is indeed one of the main reasons we are interested in the second question in the introduction, which is answered via the uniform-in-time convergence and Euler discretization in the next few sections.

\begin{Example}
	\label{eg:Gaussian}
	Suppose $d=1$.
	Suppose $f$ and $b$ are linear, namely $f(x)=c_1-c_2x$ and $b(x,y)=c_3+c_4x+c_5y$.
	Suppose $c_2>0$ and $c_2-2\max\{|c_4|,|c_5|\}>0$ so that the dissipativity assumption holds.
	Then \eqref{eq:system} is a collection of Gaussian processes.
	Letting $m_u(t):=\Emb[\Xlimit_u(t)]$ and $M_u(t):=\Emb[\Xlimit_u^2(t)]$, we have
	\begin{align*}
		m_u(t) & = m_u(0) + \int_0^t \left( c_1-c_2m_u(s) + \int_0^1 (c_3+c_4m_u(s)+c_5m_v(s))G(u,v) \,dv\right)ds, \\
		M_u(t) & = M_u(0) + \Emb\left[\int_0^t 2\Xlimit_u(s)\,d\Xlimit_u(s)\right] + \sigma^2t \\
		& = M_u(0) + 2\int_0^t \left( c_1m_u(s)-c_2M_u(s) \right. \\
		& \quad \left. + \int_0^1 (c_3m_u(s)+c_4M_u(s)+c_5m_u(s)m_v(s))G(u,v)\,dv\right)ds + \sigma^2t. 
	\end{align*}
	By Theorem \ref{thm:exponential-ergodicity}, the limits $m_u(\infty):= \lim_{t \to \infty} m_u(t)$ and $M_u(\infty):= \lim_{t \to \infty} M_u(t)$ exist and should satisfy the following equations:
	\begin{align*}
		c_1-c_2m_u(\infty) + \int_0^1 (c_3+c_4m_u(\infty)+c_5m_v(\infty))G(u,v) \,dv & = 0, \\
		c_1m_u(\infty)-c_2M_u(\infty) + \int_0^1 (c_3m_u(\infty)+c_4M_u(\infty)+c_5m_u(\infty)m_v(\infty))G(u,v)\,dv + \frac{1}{2}\sigma^2 & = 0. 
	\end{align*}
	Here the first equation is a Fredholm integral equation of the second kind, from which $m_u(\infty)$ could be written as a Liouville–Neumann series, and $M_u(\infty)$ could then be solved from the second equation.
	
	We note that even in this setup of linear systems, the long time behavior crucially depends on the graphon $G$ and the stationary distribution, such as the mean $m_u(\infty)$, is not necessarily explicit or tractable.
	In some special cases, one can get explicit expressions. For example, if we further assume $c_1=c_3=0$, then we can get
	\begin{equation*}
		m_u(\infty)=0, \quad M_u(\infty)=\frac{\sigma^2}{2(c_2-c_4\int_0^1 G(u,v)\,dv)},
	\end{equation*}
	and the second moment of the averaged measure $\lawlimitmean(\infty)$ is
	\begin{equation*}
		\int_\Rmb x^2 \,\lawlimitmean(\infty)(dx) = \int_0^1 M_u(\infty)\,du = \frac{\sigma^2}{2} \int_0^1 \frac{1}{c_2-c_4\int_0^1 G(u,v)\,dv} \,du. 
	\end{equation*}
\end{Example}

\section{Exponential ergodicity of the finite particle system}
\label{sec:system-n}

In this section we establish the exponential ergodicity of the joint distribution for the finite particle system \eqref{eq:system-n}.

Using the standing assumptions in Section \ref{sec:model}, we first show that Proposition \ref{prop:well-posedness}(b) holds uniformly in time, in the quenched sense by conditioning on the random interactions $\xi_{ij}^n$, and hence also in the annealed sense.
Write $\xi^n := (\xi_{ij}^n)_{i,j=1}^n$ and 
\begin{equation*}
	\Emb^{n,z}[\,\cdot\,] := \Emb[\,\cdot\,|\, \xi^n=(z_{ij})_{i,j=1}^n], \quad (z_{ij}=z_{ji})_{i,j=1}^n \in [0,1]^{n^2}.
\end{equation*}

\begin{Proposition}
	\phantomsection
	\label{prop:finite-moment-uniform-n}
	There exists some constant $\kappa_2 \in (0,\infty)$ such that
	\begin{equation*}
		\sup_{n \in \Nmb} \max_{i=1,\dotsc,n} \sup_{t \ge 0} \Emb^{n,\xi^n} \left[ |X_i^n(t)|^2 \right] \le \kappa_2 \:\: a.s., \quad \sup_{n \in \Nmb} \max_{i=1,\dotsc,n} \sup_{t \ge 0} \Emb \left[ |X_i^n(t)|^2 \right] \le \kappa_2.
	\end{equation*}
\end{Proposition}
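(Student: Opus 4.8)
The plan is a standard energy (Lyapunov) estimate, carried out conditionally on the random interactions and then de-conditioned. Fix a symmetric array $z=(z_{ij})_{i,j=1}^n \in [0,1]^{n^2}$. Under $\Emb^{n,z}$ the system \eqref{eq:system-n} is an ordinary finite-dimensional SDE driven by the independent Brownian motions $\{B_{i/n}\}$ with independent initial data, so Proposition \ref{prop:well-posedness}(b), which holds quenched for a.e.\ $z$ by Fubini, gives finite fourth moments on every $[0,T]$; this legitimizes It\^o's formula below and lets us discard the stochastic-integral term as a true martingale. Write $V_i^z(t):=\Emb^{n,z}[|X_i^n(t)|^2]$. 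Applying It\^o's formula to $|X_i^n(t)|^2$ and taking $\Emb^{n,z}$ yields
\begin{equation*}
\frac{d}{dt} V_i^z(t) = 2\,\Emb^{n,z}\big[ X_i^n(t)\cdot f(X_i^n(t)) \big] + \frac{2}{n}\sum_{j=1}^n z_{ij}\,\Emb^{n,z}\big[ X_i^n(t)\cdot b(X_i^n(t),X_j^n(t)) \big] + \mathrm{tr}(\sigma\sigma^\top).
\end{equation*}

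Next I would bound the drift. For the $f$-term I use the dissipativity condition \eqref{eq:c0} with $x_2=0$, giving $x\cdot f(x) \le -c_0|x|^2 + |f(0)|\,|x|$; for the interaction term I use the Lipschitz bound $|b(x,y)| \le |b(0,0)| + K_b(|x|+|y|)$, the elementary inequality $2|x|\,|y| \le |x|^2+|y|^2$, and the Cauchy--Schwarz bound $\Emb^{n,z}|X| \le (\Emb^{n,z}|X|^2)^{1/2}$. The key point is that $\frac{1}{n}\sum_{j=1}^n z_{ij} \le 1$ because $z_{ij}\in[0,1]$, which keeps every resulting constant independent of $n$ and $z$. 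Collecting terms gives, with $C_1=2(|f(0)|+|b(0,0)|)$ and $C_2=\mathrm{tr}(\sigma\sigma^\top)$,
\begin{equation*}
\frac{d}{dt} V_i^z(t) \le (-2c_0+3K_b)\,V_i^z(t) + K_b\,\frac{1}{n}\sum_{j=1}^n V_j^z(t) + C_1\big(V_i^z(t)\big)^{1/2} + C_2 .
\end{equation*}

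The estimate is then closed in two steps. Summing over $i$ and setting $W^z(t):=\frac{1}{n}\sum_{i=1}^n V_i^z(t)$, the coupling term collapses to $K_b W^z$, and by concavity of the square root $\frac{1}{n}\sum_i (V_i^z)^{1/2}\le (W^z)^{1/2}$, so that $\frac{d}{dt}W^z \le -2\kappa W^z + C_1(W^z)^{1/2} + C_2$ using $2c_0-4K_b=2\kappa$ from \eqref{eq:kappa}. Absorbing $C_1(W^z)^{1/2}\le \kappa W^z + C_1^2/(4\kappa)$ and applying Gronwall's inequality gives $\sup_{t\ge0} W^z(t) \le W^z(0) + C_1^2/(4\kappa^2) + C_2/\kappa$; since $W^z(0)=\frac{1}{n}\sum_i \Emb|\Xlimit_{i/n}(0)|^2 \le \sup_{u\in I}\Emb|\Xlimit_u(0)|^2<\infty$ by the standing assumptions, this bounds the average uniformly in $n$, $z$, and $t$. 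Feeding this bound back into the individual inequality turns it into $\frac{d}{dt}V_i^z \le -(2c_0-3K_b)V_i^z + C_1(V_i^z)^{1/2} + C_3$, where $2c_0-3K_b = 2\kappa+K_b>0$ again by \eqref{eq:kappa}; absorbing the square-root term and applying Gronwall once more yields a bound on $\sup_{t\ge0}V_i^z(t)$ uniform in $i$, $n$, $z$. Taking $z=\xi^n$ gives the quenched statement a.s., and taking expectations over $\xi^n$ (tower property) gives the annealed statement, with $\kappa_2$ the common constant.

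The only genuinely delicate point is the bookkeeping needed to keep a strictly negative coefficient in front of $V_i^z$ (and $W^z$) after all interaction contributions have been absorbed — this is exactly what the convexity/dissipativity condition \eqref{eq:kappa} supplies — together with verifying that every constant is independent of $n$ and of the realization $z$, which hinges on $\frac{1}{n}\sum_j z_{ij}\le1$ and on the initial second-moment bound being a supremum over $u\in I$. Alternatively, one may bypass the two-step argument and run Gronwall directly on $\Phi^z(t):=\max_{1\le i\le n} V_i^z(t)$, using that the maximum of finitely many $C^1$ functions is locally Lipschitz with a.e.\ derivative equal to that of a maximizing index; at such $t$ one has $\frac{1}{n}\sum_j V_j^z\le\Phi^z$, so $\frac{d}{dt}\Phi^z\le -2\kappa\Phi^z+C_1(\Phi^z)^{1/2}+C_2$, which delivers the bound directly and with the sharper rate $\kappa$.
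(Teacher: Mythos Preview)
Your proposal is correct and follows essentially the same two-step bootstrap as the paper: apply It\^o's formula conditionally on $\xi^n=z$, bound the drift via \eqref{eq:c0} and the Lipschitz property of $b$, first close the inequality for the average $W^z=\frac{1}{n}\sum_i V_i^z$, and then feed that bound back to control each $V_i^z$. The only cosmetic difference is that the paper packages the comparison step into its Lemma \ref{lem:ODE}, whereas you absorb the square-root term via Young's inequality and invoke Gronwall directly; the resulting constants differ slightly but the structure and the role of \eqref{eq:kappa} are identical. Your alternative one-step argument via $\Phi^z(t)=\max_i V_i^z(t)$ is a nice shortcut not in the paper.
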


Next we introduce some notations before stating the exponential ergodicity property.
Define the annealed and quenched joint distributions at time $t \ge 0$ by
\begin{equation*}
	\jointpre^n(t) := \Lmc((X_i^n(t))_{i=1}^n) \in \Pmc((\Rmb^d)^n)
\end{equation*}
and
\begin{equation*}
	\jointpre^{n,z}(t) := \Lmc( (X_i^n(t))_{i=1}^n \,|\, \xi^n=(z_{ij})_{i,j=1}^n), \quad z=(z_{ij}=z_{ji})_{i,j=1}^n \in [0,1]^{n^2}.
\end{equation*}
For $\eta \in \Pmc((\Rmb^d)^n)$ and $z=(z_{ij}=z_{ji})_{i,j=1}^n \in [0,1]^{n^2}$, consider the system $Y^{n,z,\eta} = (Y_i^{n,z,\eta})_{i=1}^n$ given by
\begin{align}
	Y_i^{n,z,\eta}(t) & = Y_i^{n,z,\eta}(0) + \int_0^t \left( f(Y_i^{n,z,\eta}(s)) + \frac{1}{n} \sum_{j=1}^n z_{ij} b(Y_i^{n,z,\eta}(s),Y_j^{n,z,\eta}(s)) \right) ds \label{eq:Y_eta-n} \\
	& \quad + \sigma B_{\frac{i}{n}}(t), \quad i \in \{1,\dotsc,n\}, \notag
\end{align}	
where $Y^{n,z,\eta}(0)$ is independent of $\{B_u : u \in I\}$ with $\Lmc(Y^{n,z,\eta}(0))=\eta$.
Denote by $P_t^{n,z}$ the associated Markov semigroup:
\begin{equation}
	\label{eq:Ptn}
	P_t^{n,z} \eta := \Lmc(Y^{n,z,\eta}(t)), \quad t \ge 0, \quad \eta \in \Pmc((\Rmb^d)^n).
\end{equation}

The following theorem shows that $\jointpre^{n,z}(t)$ (resp.\ $\jointpre^n(t)$) converges exponentially fast to the limiting distribution, which is also invariant with respect to $P_t^{n,z}$.

\begin{Theorem}
	\phantomsection
	\label{thm:exponential-ergodicity-n}
	\begin{enumerate}[(a)]
	\item 
		There exists a unique collection of probability measures $\{\jointpre^{n,z}(\infty) : z=(z_{ij}=z_{ji})_{i,j=1}^n \in [0,1]^{n^2}\}$ such that
		\begin{equation}
			\label{eq:exponential-ergodicity-n-z}
			\sup_{n \in \Nmb} \frac{1}{\sqrt{n}} W_2(\jointpre^{n,\xi^n}(t),\jointpre^{n,\xi^n}(\infty)) \le \sqrt{4\kappa_2} e^{-\kappa t}, \quad t \ge 0, \quad a.s.,
		\end{equation}
		and hence
		\begin{equation}
			\label{eq:exponential-ergodicity-n}
			\sup_{n \in \Nmb} \frac{1}{\sqrt{n}} W_2(\jointpre^n(t),\jointpre^n(\infty)) \le \sqrt{4\kappa_2} e^{-\kappa t}, \quad t \ge 0,
		\end{equation}	
		where $\kappa_2$ is as in Proposition \ref{prop:finite-moment-uniform-n}, and
		\begin{equation}
			\label{eq:theta-infinity}
			\jointpre^n(\infty) := \Emb[\jointpre^{n,\xi^n}(\infty)].
		\end{equation}	
	\item
		The joint distribution $\jointpre^{n,z}(\infty)$ is invariant with respect to the Markov semigroup $P_t^{n,z}$ defined in \eqref{eq:Ptn}, namely
		\begin{equation*}
			P_t^{n,z}\jointpre^{n,z}(\infty) = \jointpre^{n,z}(\infty), \quad t \ge 0.
		\end{equation*}
	\end{enumerate}
\end{Theorem}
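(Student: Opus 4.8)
The plan is to exploit that, conditioned on $\xi^n=z$, the process $(X_i^n(t))_{i=1}^n$ is a genuine time-homogeneous Markov process on $(\Rmb^d)^n$, driven by the $nd$-dimensional Brownian motion $(B_{i/n})_{i=1}^n$ with globally Lipschitz drift $y\mapsto\big(f(y_i)+\tfrac1n\sum_{j=1}^n z_{ij}\,b(y_i,y_j)\big)_{i=1}^n$ (the Lipschitz property using $z_{ij}\le1$). The core of the argument is a synchronous coupling estimate showing that the semigroup $P_t^{n,z}$ of \eqref{eq:Y_eta-n} is a strict $W_2$-contraction with rate $e^{-\kappa t}$ on the space $\Pmc_2((\Rmb^d)^n)$ of square-integrable probability measures, \emph{uniformly in $n$ and $z$}; parts (a) and (b) then follow from the Banach fixed point theorem together with the uniform second-moment bound of Proposition \ref{prop:finite-moment-uniform-n}. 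This parallels the proof of Theorem \ref{thm:exponential-ergodicity}, but is in fact cleaner here since the conditioned finite system is honestly Markov.

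For the contraction, fix $z=(z_{ij}=z_{ji})_{i,j=1}^n\in[0,1]^{n^2}$ and $\eta,\etatil\in\Pmc_2((\Rmb^d)^n)$, and run $Y^{n,z,\eta}$ and $Y^{n,z,\etatil}$ from \eqref{eq:Y_eta-n} with the same Brownian motions, with $(Y^{n,z,\eta}(0),Y^{n,z,\etatil}(0))$ an optimal $W_2$-coupling of $(\eta,\etatil)$. Since the $\sigma B$-terms cancel, $\Delta_i(t):=Y_i^{n,z,\eta}(t)-Y_i^{n,z,\etatil}(t)$ is absolutely continuous and
\begin{align*}
	\frac{d}{dt}|\Delta_i(t)|^2 &= 2\Delta_i\cdot\big(f(Y_i^{n,z,\eta})-f(Y_i^{n,z,\etatil})\big) \\
	&\quad+\frac{2}{n}\sum_{j=1}^n z_{ij}\,\Delta_i\cdot\big(b(Y_i^{n,z,\eta},Y_j^{n,z,\eta})-b(Y_i^{n,z,\etatil},Y_j^{n,z,\etatil})\big).
\end{align*}
By \eqref{eq:c0} the first term is $\le-2c_0|\Delta_i|^2$, and by the Lipschitz bound on $b$ with $0\le z_{ij}\le1$ the second is $\le2K_b|\Delta_i|^2+\tfrac{2K_b}{n}\sum_j|\Delta_i|\,|\Delta_j|$. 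Summing over $i$ and using $\tfrac1n\big(\sum_i|\Delta_i|\big)^2\le\sum_i|\Delta_i|^2$ gives $\frac{d}{dt}\sum_i|\Delta_i(t)|^2\le-2\kappa\sum_i|\Delta_i(t)|^2$; integrating and taking expectations yields $\Emb\sum_i|\Delta_i(t)|^2\le e^{-2\kappa t}\,\Emb\sum_i|\Delta_i(0)|^2=e^{-2\kappa t}W_2^2(\eta,\etatil)$. As $\Lmc\big((Y^{n,z,\eta}(t),Y^{n,z,\etatil}(t))\big)$ couples $P_t^{n,z}\eta$ and $P_t^{n,z}\etatil$, we obtain $W_2(P_t^{n,z}\eta,P_t^{n,z}\etatil)\le e^{-\kappa t}W_2(\eta,\etatil)$.

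To conclude: $(\Pmc_2((\Rmb^d)^n),W_2)$ is complete, so for each $z$ and any fixed $t_0>0$, $P_{t_0}^{n,z}$ has a unique fixed point $\jointpre^{n,z}(\infty)\in\Pmc_2((\Rmb^d)^n)$, which by the semigroup property is fixed by every $P_t^{n,z}$ — this proves (b); moreover $z\mapsto\jointpre^{n,z}(\infty)$ is measurable, being a pointwise $W_2$-limit of the measurable maps $z\mapsto\jointpre^{n,z}(t)=P_t^{n,z}\big(\bigotimes_{i=1}^n\lawlimit_{i/n}(0)\big)$ (here $\xi^n$ independent of the initial states gives $\jointpre^{n,z}(0)=\bigotimes_{i=1}^n\lawlimit_{i/n}(0)$), so \eqref{eq:theta-infinity} is well defined. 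Applying the contraction to $\jointpre^{n,z}(t)=P_t^{n,z}\jointpre^{n,z}(0)$ gives $W_2^2(\jointpre^{n,z}(t),\jointpre^{n,z}(\infty))\le e^{-2\kappa t}W_2^2(\jointpre^{n,z}(0),\jointpre^{n,z}(\infty))$; using $W_2^2(m_1,m_2)\le2\int|x|^2m_1(dx)+2\int|x|^2m_2(dx)$, Proposition \ref{prop:finite-moment-uniform-n} (which bounds $\int|x|^2\jointpre^{n,z}(t)(dx)\le n\kappa_2$ for $\Lmc(\xi^n)$-a.e.\ $z$ and all $t$), and lower semicontinuity of the second moment along $\jointpre^{n,z}(t)\to\jointpre^{n,z}(\infty)$, one gets $W_2^2(\jointpre^{n,z}(0),\jointpre^{n,z}(\infty))\le 4n\kappa_2$, which is \eqref{eq:exponential-ergodicity-n-z}. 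Finally \eqref{eq:exponential-ergodicity-n} follows by gluing the quenched optimal couplings: $W_2^2(\jointpre^n(t),\jointpre^n(\infty))=W_2^2(\Emb[\jointpre^{n,\xi^n}(t)],\Emb[\jointpre^{n,\xi^n}(\infty)])\le\Emb\big[W_2^2(\jointpre^{n,\xi^n}(t),\jointpre^{n,\xi^n}(\infty))\big]\le4n\kappa_2\,e^{-2\kappa t}$, using $\jointpre^n(t)=\Emb[\jointpre^{n,\xi^n}(t)]$.

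The one point requiring genuine care is keeping the contraction rate at exactly $\kappa=c_0-2K_b$ uniformly in $n$ and $z$: this hinges on discarding the weights via $z_{ij}\le1$ and then on the Cauchy--Schwarz step $\tfrac1n(\sum_i|\Delta_i|)^2\le\sum_i|\Delta_i|^2$, which collapses the double sum into $\sum_i|\Delta_i|^2$ with no $n$-dependent constant. The remaining ingredients — the uniform moment bound for $\jointpre^{n,z}(\infty)$ and the measurability of $z\mapsto\jointpre^{n,z}(\infty)$ — are routine once the contraction and Proposition \ref{prop:finite-moment-uniform-n} are in hand.
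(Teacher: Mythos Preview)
Your proof is correct and follows essentially the same approach as the paper: the core in both is the synchronous-coupling contraction $W_2(P_t^{n,z}\eta,P_t^{n,z}\etatil)\le e^{-\kappa t}W_2(\eta,\etatil)$, obtained from \eqref{eq:c0}, the Lipschitz bound on $b$, and the Cauchy--Schwarz collapse of the double sum. The only difference is packaging: you invoke the Banach fixed point theorem on $(\Pmc_2((\Rmb^d)^n),W_2)$ to obtain existence, uniqueness, and invariance of $\jointpre^{n,z}(\infty)$ in one stroke, whereas the paper shows $\{\jointpre^{n,z}(t)\}_{t\ge0}$ is $W_2$-Cauchy via $W_2^2(\jointpre^{n,z}(t+s),\jointpre^{n,z}(t))\le 4n\kappa_2 e^{-2\kappa t}$ and then verifies invariance separately by a triangle-inequality argument --- your route is slightly cleaner and avoids restricting to the moment-bounded set $A_n$.
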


Proofs of Proposition \ref{prop:finite-moment-uniform-n} and Theorem \ref{thm:exponential-ergodicity-n} are given in Section \ref{sec:pf-system-n}.

\section{Uniform-in-time convergence}
\label{sec:LLN}

In this section we analyze the uniform-in-time convergence of the finite particle system \eqref{eq:system-n} to the graphon particle system \eqref{eq:system}.

We make the following assumption on the kernel $G_n$. 
Note that \eqref{eq:G_n-step-graphon} is just a convenient and natural form to view $(G_n(\frac{i}{n},\frac{j}{n}) : i,j=1,\dotsc,n)$ as a piece-wise constant graphon.

\begin{Condition}
	\label{cond:G_n-step-graphon}
	$G_n$ is a step graphon, that is,
	\begin{equation}
		\label{eq:G_n-step-graphon}
		G_n(u,v) = G_n \left( \frac{\lceil nu \rceil}{n}, \frac{\lceil nv \rceil}{n} \right), \quad \text{for } (u,v) \in I \times I.
	\end{equation}
	Moreover, $G_n \to G$ in the cut metric as $n \to \infty$.
\end{Condition}

\begin{Remark}
	In general, if $\delta_\square(G_n,G) \to 0$ for a sequence of step graphons, then it follows from \cite[Theorem 11.59]{Lovasz2012large} that $\|G_n-G\|_\square \to 0$, after suitable relabeling of $G_n$.
	Therefore we directly assume in Condition \ref{cond:G_n-step-graphon} that the convergence of $G_n$ to $G$ is in the cut metric $d_{\square}$, instead of assuming that $d_{\square}(G_n^{\varphi_n},G) \to 0$ for some relabeling function $\varphi_n$ and $X_i^n(0)=\Xlimit_{\varphi_n^{-1}(\frac{i}{n})}(0)$.	
\end{Remark}

The following convergence on finite time intervals was shown in \cite[Theorem 3.1]{BayraktarChakrabortyWu2020graphon} without the dissipativity assumption.

\begin{Proposition}(\cite[Theorem 3.1]{BayraktarChakrabortyWu2020graphon})
	\phantomsection
	\label{thm:old}
	Suppose Conditions \ref{cond:G-continuous} and \ref{cond:G_n-step-graphon} hold.  
	Fix $T \in (0,\infty)$.
	As $n \to \infty$,
	\begin{equation*}
		\frac{1}{n} \sum_{i=1}^n \Emb \left[\sup_{t \in [0,T]} \left|X_i^n(t)-\Xlimit_{\frac{i}{n}}(t)\right|^2\right] \to 0.
	\end{equation*}
\end{Proposition}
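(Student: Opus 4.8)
The plan is to run a standard coupling/synchronization argument between the finite system \eqref{eq:system-n} and the graphon system \eqref{eq:system}, driving $\Xlimit_{i/n}$ by the same Brownian motion $B_{i/n}$ as $X_i^n$, then estimating $\Emb[\sup_{t\le T}|X_i^n(t)-\Xlimit_{i/n}(t)|^2]$ averaged over $i$ via Gr\"onwall. First I would write, for each $i$, the difference $X_i^n(t)-\Xlimit_{i/n}(t)$ as a time integral of the drift discrepancy (the $\sigma B_{i/n}$ terms cancel), split this into (i) the ``own-drift'' term $f(X_i^n(s))-f(\Xlimit_{i/n}(s))$, controlled by $K_f$; (ii) the interaction discrepancy, which I would further decompose as
\begin{align*}
	& \frac1n\sum_{j=1}^n \xi_{ij}^n b(X_i^n(s),X_j^n(s)) - \int_0^1\!\!\int_{\Rmb^d} b(\Xlimit_{i/n}(s),x) G(\tfrac in,v)\,\lawlimit_{v,s}(dx)\,dv \\
	& \quad = \underbrace{\frac1n\sum_j \xi_{ij}^n\big(b(X_i^n(s),X_j^n(s))-b(\Xlimit_{i/n}(s),\Xlimit_{j/n}(s))\big)}_{\text{(A)}} \\
	& \qquad + \underbrace{\frac1n\sum_j (\xi_{ij}^n - G_n(\tfrac in,\tfrac jn)) b(\Xlimit_{i/n}(s),\Xlimit_{j/n}(s))}_{\text{(B)}} \\
	& \qquad + \underbrace{\frac1n\sum_j G_n(\tfrac in,\tfrac jn) b(\Xlimit_{i/n}(s),\Xlimit_{j/n}(s)) - \int_0^1\!\!\int b(\Xlimit_{i/n}(s),x) G(\tfrac in,v)\,\lawlimit_{v,s}(dx)\,dv}_{\text{(C)}}.
\end{align*}
Term (A) splits via the Lipschitz bound on $b$ into $K_b(|X_i^n(s)-\Xlimit_{i/n}(s)| + \frac1n\sum_j|X_j^n(s)-\Xlimit_{j/n}(s)|)$, which after averaging over $i$ and applying Cauchy--Schwarz closes the Gr\"onwall loop. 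Term (B) is a martingale-in-$j$ fluctuation (mean zero in the Bernoulli case, identically zero in case (i)) whose $L^2$-norm, using independence of the $\xi_{ij}^n$ and the uniform moment bound from Proposition \ref{prop:well-posedness}(a) together with the linear growth of $b$, is $O(1/\sqrt n)$; by Doob's inequality the $\sup_{t\le T}$ costs only a constant. Term (C) is the purely deterministic graphon-discretization error: one rewrites $\frac1n\sum_j G_n(\tfrac in,\tfrac jn)b(\Xlimit_{i/n}(s),\Xlimit_{j/n}(s))$ as an integral of the step graphon against the (measurable) map $v\mapsto \int b(\Xlimit_{i/n}(s),x)\lawlimit_{\lceil nv\rceil/n,s}(dx)$, compares with the same integral against $G$ using $\|G_n-G\|_\square\to 0$ (equivalently $\|G_n-G\|\to 0$ by Remark \ref{rmk:graphon-convergence}), and handles the replacement of $\lawlimit_{\lceil nv\rceil/n,s}$ by $\lawlimit_{v,s}$ and of $\Xlimit_{\lceil nv\rceil/n}$ by $\Xlimit_v$ via the continuity-in-$u$ estimate of Theorem \ref{thm:exponential-ergodicity}(c) / Corollary \ref{cor:continuity-Lipschitz-special}(a) guaranteed by Condition \ref{cond:G-continuous}; since $b(\Xlimit_{i/n}(s),\cdot)$ is $1$-Lipschitz up to the constant $K_b$, the $W_1$ (hence $W_2$) bounds feed directly into the cut-norm duality.

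After these bounds, writing $h_n(t) := \frac1n\sum_i \Emb[\sup_{s\le t}|X_i^n(s)-\Xlimit_{i/n}(s)|^2]$, I would arrive at an inequality of the form
\begin{equation*}
	h_n(t) \le C_T \int_0^t h_n(s)\,ds + \varepsilon_n, \qquad t\in[0,T],
\end{equation*}
where $\varepsilon_n \to 0$ collects the $O(1/n)$ fluctuation contribution from (B) and the graphon-approximation contribution from (C) (the latter tending to $0$ by Condition \ref{cond:G_n-step-graphon} and dominated convergence in $s$, using the uniform fourth-moment bound of Proposition \ref{prop:well-posedness}(a) to justify interchanging limits). Gr\"onwall's lemma then gives $h_n(T)\le \varepsilon_n e^{C_T T}\to 0$, which is the claim.

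The main obstacle is term (C): making rigorous that the Riemann-type sum over the step graphon $G_n$ against the heterogeneous family of measures $\lawlimit_{v,s}$ converges to the integral against $G$. This requires the measurability and (a.e.) continuity of $v\mapsto\lawlimit_{v,s}$ provided by Proposition \ref{prop:well-posedness}(a) and Condition \ref{cond:G-continuous}, and careful use of the cut-metric convergence rather than pointwise convergence of $G_n$; one must test $G_n-G$ against the bounded measurable (indeed, under Condition \ref{cond:G-continuous}, a.e.-continuous) function $v\mapsto\int b(\Xlimit_{i/n}(s),x)\lawlimit_{v,s}(dx)$ and invoke the operator-norm convergence $\|G_n-G\|_{\infty\to1}\to 0$ from Remark \ref{rmk:graphon-convergence}, then pass the resulting bound through the $\frac1n\sum_i$ average and the $ds$ integral uniformly on $[0,T]$. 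Everything else (own drift, Lipschitz interaction, Bernoulli fluctuation, Doob, Gr\"onwall) is routine given the standing assumptions; note the dissipativity conditions \eqref{eq:c0}--\eqref{eq:kappa} are \emph{not} needed here, consistent with the statement being quoted from \cite{BayraktarChakrabortyWu2020graphon}.
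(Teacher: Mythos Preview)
The paper does not prove this proposition; it is cited from \cite{BayraktarChakrabortyWu2020graphon}. Your coupling/Gr\"onwall strategy is essentially that of the paper's proof of the closely related Theorem~\ref{thm:moment-convergence-uniform}(a), where the analogous pieces of the interaction are called $\Smc_s^{n,1},\Smc_s^{n,2},\Smc_s^{n,3}$; so in spirit you are on the right track. However, your handling of (C) has two genuine gaps.

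First, (C) is not ``purely deterministic'': as you wrote it, it still contains the random variables $\Xlimit_{j/n}(s)$, and your ``rewriting'' of $\frac1n\sum_j G_n(\tfrac in,\tfrac jn)\,b(\Xlimit_{i/n}(s),\Xlimit_{j/n}(s))$ as an integral against $\lawlimit_{\lceil nv\rceil/n,s}$ silently replaces each $\Xlimit_{j/n}(s)$ by its law---that is a weak-LLN step, not an identity. The paper sidesteps this by centering its fluctuation term $\Smc_s^{n,2}$ at $\int b(\Xlimit_{i/n}(s),x)\,G_n(\tfrac in,\tfrac jn)\,\lawlimit_{j/n,s}(dx)$ directly (see \eqref{eq:moment-convergence-uniform-9}), so that both the $\xi_{ij}^n$-fluctuation and the $\Xlimit_{j/n}$-fluctuation are absorbed into a single $O(1/\sqrt n)$ term; your split (B)/(C) leaves the latter fluctuation unaccounted for. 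Second, and more substantially, the operator norm $\|G_n-G\|_{\infty\to 1}$ controls $\int_I\bigl|\int_I(G_n-G)(u,v)\,g(v)\,dv\bigr|\,du$ only for a \emph{single} bounded $g$; your test function $v\mapsto\int b(\Xlimit_{i/n}(s),x)\,\lawlimit_{v,s}(dx)$ depends on $u$ through the random $\Xlimit_{i/n}(s)$, and its $L^\infty$-norm is not uniform in that variable, so the cut-norm bound does not apply as you invoke it. This is precisely why the paper needs Lemma~\ref{lem:truncation}: one first truncates $b$ to $b_M$, then approximates $b_M$ by a finite sum $\sum_k a_k(x)c_k(y)$ of tensor products (via \cite{Schultz1969multivariate}), so that the $u$-dependence factors out as $a_k(\Xlimit_{i/n}(s))$ and one is left testing $G_n-G$ against the genuinely $u$-independent bounded functions $v\mapsto\int c_k(x)\one_{\{|x|\le M\}}\,\lawlimit_{v,s}(dx)$. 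Without this truncation-plus-separation-of-variables device the cut-norm step does not close.
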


Recall $\lawlimitmean(t)$ and $\lawlimitmean(\infty)$ introduced in \eqref{eq:lawlimitmean}.
Let
\begin{equation*}
	\emppre^n(t) := \frac{1}{n} \sum_{i=1}^n \delta_{X_i^n(t)}, \quad \lawpre^n(t) := \frac{1}{n} \sum_{i=1}^n \Lmc(X_i^n(t)) = \Emb \emppre^n(t).
\end{equation*}
Although Proposition \ref{thm:old} holds for finite time horizon, it does not provide sufficient information about the convergence of stationary measures.
Under the dissipativity assumption, we have the following uniform in time convergence of $X_i^n$ and LLN of $\emppre^n$ and $\lawpre^n$, which in particular guarantees the convergence of stationary measures (see Corollary \ref{cor:interchange-infinity}).

\begin{Theorem}
	\phantomsection
	\label{thm:moment-convergence-uniform}
	Suppose Conditions \ref{cond:G-continuous} and \ref{cond:G_n-step-graphon} hold. 
	\begin{enumerate}[(a)]
	\item 
		As $n \to \infty$,
		\begin{equation*}
			\sup_{t \ge 0} \frac{1}{n} \sum_{i=1}^n \Emb |X_i^n(t)-\Xlimit_{\frac{i}{n}}(t)|^2 \to 0.
		\end{equation*}	
	\item
		(LLN) As $n \to \infty$,	
		\begin{equation}
			\label{eq:LLN-uniform}
			\sup_{t \ge 0} W_2(\lawpre^n(t),\lawlimitmean(t)) \to 0, \quad \sup_{t \ge 0} \Emb W_2(\emppre^n(t),\lawlimitmean(t)) \to 0.
		\end{equation}
	\end{enumerate}
\end{Theorem}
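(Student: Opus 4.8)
The plan is to prove part (a) first via a Gronwall-type estimate that exploits the dissipativity condition \eqref{eq:kappa} to absorb the linear-in-time growth, and then deduce part (b) from part (a) together with Lemma \ref{lem:Wasserstein-empirical} (the concentration bound for empirical measures of i.i.d.\ variables) and the averaged exponential ergodicity \eqref{eq:exponential-ergodicity-average}. For part (a), set $\Delta_i^n(t) := X_i^n(t) - \Xlimit_{i/n}(t)$ and apply It\^o's formula to $|\Delta_i^n(t)|^2$. The Brownian terms cancel since both systems share $\sigma B_{i/n}$, so
\begin{align*}
	\frac{d}{dt} \Emb |\Delta_i^n(t)|^2 &= 2\Emb \big[ \Delta_i^n(t) \cdot \big( f(X_i^n(t)) - f(\Xlimit_{i/n}(t)) \big) \big] \\
	&\quad + 2\Emb \Big[ \Delta_i^n(t) \cdot \Big( \tfrac1n \sum_j \xi_{ij}^n b(X_i^n(t), X_j^n(t)) - \int_I \int_{\Rmb^d} b(\Xlimit_{i/n}(t), x) G(\tfrac in, v)\, \lawlimit_{v,t}(dx)\,dv \Big) \Big].
\end{align*}
The first term is bounded by $-2c_0 \Emb|\Delta_i^n(t)|^2$ using \eqref{eq:c0}. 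For the second term, I would insert and subtract intermediate quantities to split the interaction error into: (i) the difference from changing $b(X_i^n, X_j^n)$ to $b(\Xlimit_{i/n}, \Xlimit_{j/n})$, controlled by $2K_b \Emb|\Delta_i^n|(\,|\Delta_i^n| + |\Delta_j^n|\,)$; (ii) a ``graphon discretization'' error comparing $\frac1n\sum_j \xi_{ij}^n b(\Xlimit_{i/n}, \Xlimit_{j/n})$ with $\int_I \int b(\Xlimit_{i/n}, x) G_n(\tfrac in, \tfrac{\lceil nv\rceil}{n})\,\lawlimit_{\cdot,t}$-type terms, which vanishes using the finite fourth moments (Proposition \ref{prop:finite-moment-uniform}), the Lipschitz bound on $b$, independence of the $\xi_{ij}^n$ in case (ii) of the standing assumptions, and continuity in Condition \ref{cond:G-continuous}; and (iii) the error $\int_I \int b(\Xlimit_{i/n}, x)(G_n - G)(\tfrac in, v)\,\lawlimit_{v,t}(dx)\,dv$ from $G_n \to G$ in cut metric, which is where the operator-norm convergence in Remark \ref{rmk:graphon-convergence} is used (after summing over $i$ and viewing $v \mapsto \int b(\Xlimit_{i/n}, x)\lawlimit_{v,t}(dx)$ as a bounded test function, uniformly in $t$ by Proposition \ref{prop:finite-moment-uniform}). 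Averaging over $i$, using Cauchy--Schwarz on the cross terms $\Emb|\Delta_i^n||\Delta_j^n|$, and writing $g_n(t) := \frac1n \sum_i \Emb|\Delta_i^n(t)|^2$, one arrives at a differential inequality of the shape
\begin{equation*}
	g_n'(t) \le -2(c_0 - 2K_b) g_n(t) + \eps_n + \rho_n(t) = -2\kappa\, g_n(t) + \eps_n + \rho_n(t),
\end{equation*}
where $\eps_n \to 0$ is the time-uniform part of the error (from $\|G_n - G\| \to 0$, plus a $1/n$-type term from the variance of the $\xi_{ij}^n$), and $\sup_{t\ge 0} \rho_n(t) \to 0$ captures the remaining discretization error which I expect is also uniform in $t$ using Proposition \ref{prop:finite-moment-uniform} and Proposition \ref{prop:finite-moment-uniform-n}. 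Since $g_n(0) = 0$ (the initial conditions agree: $X_i^n(0) = \Xlimit_{i/n}(0)$), Gronwall gives $g_n(t) \le \frac{1}{2\kappa}\sup_{s\ge 0}(\eps_n + \rho_n(s))$ for all $t$, hence $\sup_{t\ge 0} g_n(t) \to 0$, which is exactly (a). The role of $\kappa > 0$ is essential here: without it the naive bound would give a factor $e^{Ct}$ and only finite-horizon control.

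For part (b), write $W_2(\lawpre^n(t), \lawlimitmean(t)) \le W_2(\lawpre^n(t), \hat\lawlimit^n(t)) + W_2(\hat\lawlimit^n(t), \lawlimitmean(t))$ where $\hat\lawlimit^n(t) := \frac1n \sum_i \lawlimit_{i/n, t} = \frac1n\sum_i \Lmc(\Xlimit_{i/n}(t))$ is the empirical average of the first $n$ marginals of the graphon system. The first piece is bounded by coupling $X_i^n(t)$ with $\Xlimit_{i/n}(t)$ through the common Brownian motion: $W_2^2(\lawpre^n(t), \hat\lawlimit^n(t)) \le \frac1n\sum_i \Emb|\Delta_i^n(t)|^2 = g_n(t)$, which $\to 0$ uniformly in $t$ by part (a). The second piece is a Riemann-sum error: $\hat\lawlimit^n(t)$ approximates $\int_I \lawlimit_{u,t}\,du = \lawlimitmean(t)$, and this is controlled uniformly in $t$ using the $W_2$-continuity of $u \mapsto \lawlimit_{u,t}$ on each subinterval $I_i$ — which by Corollary \ref{cor:continuity-Lipschitz-special}(a) (a consequence of Theorem \ref{thm:exponential-ergodicity}(c)) holds \emph{uniformly over $t \in [0,\infty]$}, the crucial point; a standard argument then shows the Riemann sum converges to the integral in $W_2$ uniformly in $t$, using also the uniform fourth-moment bound to handle tails. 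This gives the first claim in \eqref{eq:LLN-uniform}. For the second claim, bound $\Emb W_2(\emppre^n(t), \lawlimitmean(t)) \le \Emb W_2(\emppre^n(t), \hat\lawlimit^n(t)) + W_2(\hat\lawlimit^n(t), \lawlimitmean(t))$; the second term is handled as above, and for the first, couple $\emppre^n(t)$ (the empirical measure of the $X_i^n(t)$) to the empirical measure $\bar\emppre^n(t) := \frac1n\sum_i \delta_{\Xlimit_{i/n}(t)}$ via the same pathwise coupling — contributing $g_n(t)^{1/2}$ — and then invoke Lemma \ref{lem:Wasserstein-empirical}, the generalization of the Fournier--Guillin bounds, to control $\Emb W_2(\bar\emppre^n(t), \hat\lawlimit^n(t))$, the distance between the empirical measure of the \emph{independent} (but not identically distributed) random variables $\Xlimit_{i/n}(t)$ and their average law; this requires the uniform-in-time fourth-moment bound of Proposition \ref{prop:finite-moment-uniform} to get a rate independent of $t$.

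The main obstacle, I expect, is establishing that the discretization error terms (items (ii) and (iii) above, and the Riemann-sum error in part (b)) are genuinely \emph{uniform in time}, not merely convergent on each finite horizon. This is exactly the gap between Proposition \ref{thm:old} and Theorem \ref{thm:moment-convergence-uniform}, and it is resolved by combining the time-uniform moment bounds (Propositions \ref{prop:finite-moment-uniform} and \ref{prop:finite-moment-uniform-n}), the time-uniform modulus of continuity $\sup_{t\in[0,\infty]} W_2(\lawlimit_{u_1,t}, \lawlimit_{u_2,t}) \to 0$ from Theorem \ref{thm:exponential-ergodicity}(c), and the dissipativity $\kappa > 0$ which converts the Gronwall factor into a bounded constant. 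A secondary technical point is handling case (ii) of the standing assumptions (random Bernoulli edges), where one needs a variance estimate for $\frac1n\sum_j (\xi_{ij}^n - G_n(\tfrac in,\tfrac jn)) b(\Xlimit_{i/n}, \Xlimit_{j/n})$ that is $O(1/n)$ uniformly in $t$; this follows from conditional independence of the $\xi_{ij}^n$ given the $\Xlimit$'s and the uniform second-moment bound on $b(\Xlimit_{i/n}, \Xlimit_{j/n})$.
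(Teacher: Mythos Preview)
Your overall architecture is correct and matches the paper's: It\^o's formula on $|\Delta_i^n|^2$, the three-way split of the interaction error, a dissipative Gronwall-type bound on the averaged quantity, and then for (b) the triangle inequality through $\hat\lawlimit^n(t)=\frac1n\sum_i\lawlimit_{i/n,t}$ and the empirical measure $\bar\nu^n(t)=\frac1n\sum_i\delta_{\Xlimit_{i/n}(t)}$, with Lemma~\ref{lem:Wasserstein-empirical} and Corollary~\ref{cor:continuity-Lipschitz-special}(a) supplying the time-uniform ingredients. Your treatment of pieces (i), (ii) and of part~(b) is essentially what the paper does.

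There is, however, a real gap in your item (iii). You propose to control
\[
\int_I \int_{\Rmb^d} b(\Xlimit_{i/n}(s),x)\bigl(G_n(\tfrac in,v)-G(\tfrac in,v)\bigr)\lawlimit_{v,s}(dx)\,dv
\]
by ``viewing $v\mapsto \int b(\Xlimit_{i/n},x)\,\lawlimit_{v,t}(dx)$ as a bounded test function'' and invoking the operator-norm convergence $\|G_n-G\|_{\infty\to 1}\to 0$. This does not work as stated, for two reasons. First, that function of $v$ has sup-norm of order $C(1+|\Xlimit_{i/n}(s)|)$, which is random and unbounded; the moment bound in Proposition~\ref{prop:finite-moment-uniform} controls expectations, not $L^\infty$. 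Second, and more seriously, the test function depends on $i$ (through $\Xlimit_{i/n}(s)$), whereas $\|G_n-G\|_{\infty\to 1}$ bounds $\int_I\bigl|\int_I(G_n-G)(u,v)g(v)\,dv\bigr|du$ only for a \emph{single} $g$ independent of $u$. After averaging over $i$ you face $\frac1n\sum_i\bigl|\int_I(G_n-G)(\tfrac in,v)g_i(v)\,dv\bigr|$ with $i$-dependent $g_i$, and the cut-norm gives no control of this in general.

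The paper closes this gap with Lemma~\ref{lem:truncation}: one first truncates $b$ to $b_M(x,y)=b(x,y)\one_{\{|x|\le M,|y|\le M\}}$ (the error is $O(M^{-1/2})$ uniformly in $s$ by the fourth-moment bound), and then approximates $b_M$ uniformly by a finite sum of tensor products $\tilde b_m(x,y)=\sum_k a_k(x)c_k(y)\one_{\{|x|\le M,|y|\le M\}}$ via multivariate polynomial approximation. The tensor structure is the point: now $\int \tilde b_m(\Xlimit_{i/n},x)\,\lawlimit_{v,s}(dx)$ factors as $\sum_k a_k(\Xlimit_{i/n})\one_{\{|\Xlimit_{i/n}|\le M\}}\cdot\bigl(\int c_k(x)\one_{\{|x|\le M\}}\lawlimit_{v,s}(dx)\bigr)$, so the $v$-dependence is carried by finitely many bounded functions $g_k(v)$ \emph{independent of $i$}, and $\|G_n-G\|_{\infty\to 1}$ applies term by term. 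The remaining pieces of $\Rmc^n_s$ (replacing $G(u,v)$ by $G(\lceil nu\rceil/n,v)$ and $\lawlimit_{\lceil nv\rceil/n,s}$ by $\lawlimit_{v,s}$) are handled by Condition~\ref{cond:G-continuous} and Corollary~\ref{cor:continuity-Lipschitz-special}(a), and the final $\limsup_{M\to\infty}$ kills the truncation error. Without this device (or an equivalent separation-of-variables argument), your step (iii) does not go through.

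A minor stylistic difference: you write the differential inequality with an additive forcing $g_n'\le -2\kappa g_n+\eps_n+\rho_n(t)$, while the paper uses Cauchy--Schwarz to get $g_n'\le -2\kappa g_n + C_n\sqrt{g_n}$ and then the ODE comparison Lemma~\ref{lem:ODE}. Either form yields $\sup_t g_n(t)\to 0$ once the forcing is shown to vanish uniformly in $t$; the substantive issue is only the one above.
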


\begin{Remark}
	\begin{enumerate}[(a)]
	\item 
		We note that Theorem \ref{thm:moment-convergence-uniform} and many existing results (such as \cite[Theorem 2.1(b)]{BarreDobsonOttobreZatorska2020fast}, \cite[Theorem 3.4]{BudhirajaFan2017}, \cite[Theorem 3.4]{BudhirajaPalMajumder2015long} and \cite[Theorem 2]{Veretennikov2006ergodic}) work on the marginal distributions, which is sufficient for the analysis of approximating stationary measure in Corollary \ref{cor:uniform-approximation-Euler-Lipschitz}.
		The study on the trajectory level of the difference $\Emb \left[ \sup_{t \ge 0} \frac{1}{n} \sum_{i=1}^n |X_i^n(t)-\Xlimit_{\frac{i}{n}}(t)|^2 \right]$ is more challenging and beyond the scope of this work.
	\item
		We also note that graphs with vanishing density degrees and rescaled strength of interactions are analyzed in \cite{BayraktarChakrabortyWu2020graphon} and Proposition \ref{thm:old} is proved under certain conditions via a Girsanov's change of measure argument.
		It is challenging to apply such an argument to the long-time analysis. 
		The study of the uniform-in-time convergence (and Euler discretization) for such graphs will be the future work.
	\end{enumerate}	
\end{Remark}

The following condition will be used for analyzing efficient Euler discretization and simulation in Section \ref{sec:Euler}.

\begin{Condition}
	\label{cond:G_n=G}
	$G_n$ is a graphon such that $G_n(\frac{i}{n},\frac{j}{n}) = G(\frac{i}{n},\frac{j}{n})$ for each $i,j \in \{1,\dotsc,n\}$.
\end{Condition}

\begin{Remark}
	We note that Condition \ref{cond:G_n=G} is trivially satisfied if $G_n=G$.
	Alternatively, one may take $G_n$ to be a step graphon that is consistent with $G$:
	\begin{equation*}
		G_n(u,v) = G \left( \frac{\lceil nu \rceil}{n}, \frac{\lceil nv \rceil}{n} \right), \quad \text{for } (u,v) \in I \times I.
	\end{equation*}
\end{Remark}

Condition \ref{cond:G_n=G} allows one to obtain POC and the rate of convergence in Theorem \ref{thm:moment-convergence-uniform}.
Let
\begin{equation}
	\label{eq:an}
	a(n) := n^{-1/d}+n^{-1/12}.
\end{equation}

\begin{Theorem}
	\phantomsection
	\label{thm:moment-convergence-uniform-Lipschitz}
	Suppose Conditions \ref{cond:G-Lipschitz} and \ref{cond:G_n=G} hold. 
	Then there exists some $C \in (0,\infty)$ such that the following hold.
	\begin{enumerate}[(a)]
	\item 
		For all $n \in \Nmb$,
		\begin{equation*}
			\sup_{t \ge 0} \max_{i=1,\dotsc,n} \Emb |X_i^n(t)-\Xlimit_{\frac{i}{n}}(t)|^2 \le \frac{C}{n}.
		\end{equation*}
	\item
		(LLN) For all $n \in \Nmb$,
		\begin{equation*}
			\sup_{t \ge 0} W_2(\lawpre^n(t),\lawlimitmean(t)) \le \frac{C}{\sqrt{n}}, \quad \sup_{t \ge 0} \Emb W_2(\emppre^n(t),\lawlimitmean(t)) \le Ca(n).
		\end{equation*}
	\item
		(POC) For all $n,k \in \Nmb$ and any distinct $i_1,\dotsc,i_k \in \{1,\dotsc,n\}$,
		\begin{equation*}
			\sup_{t \ge 0} W_2(\Lmc(X_{i_1}^n(t), \dotsc, X_{i_k}^n(t)), \, \lawlimit_{\frac{i_1}{n},t} \otimes \dotsb \otimes \lawlimit_{\frac{i_k}{n},t}) \le \frac{C\sqrt{k}}{\sqrt{n}}.
		\end{equation*}
	\end{enumerate}
\end{Theorem}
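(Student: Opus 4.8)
The starting point is a synchronous coupling: both $X_i^n$ in \eqref{eq:system-n} and $\Xlimit_{\frac in}$ in \eqref{eq:system} are driven by the same Brownian motion $B_{\frac in}$ and issued from the same point $\Xlimit_{\frac in}(0)$, so the Brownian terms cancel in $D_i(t):=X_i^n(t)-\Xlimit_{\frac in}(t)$, which solves a random ordinary differential equation with $D_i(0)=0$. Computing $\frac{d}{dt}\Emb|D_i(t)|^2$, the $f$-drift contributes at most $-2c_0\Emb|D_i(t)|^2$ by the dissipativity \eqref{eq:c0}, while the interaction mismatch $\frac1n\sum_j\xi_{ij}^n b(X_i^n(s),X_j^n(s))-\int_I\int_{\Rmb^d}b(\Xlimit_{\frac in}(s),x)G(\frac in,v)\lawlimit_{v,s}(dx)\,dv$ is decomposed as $R_i+T_3+T_4+T_5$, where $R_i=\frac1n\sum_j\xi_{ij}^n\big(b(X_i^n(s),X_j^n(s))-b(\Xlimit_{\frac in}(s),\Xlimit_{\frac jn}(s))\big)$, $T_3=\frac1n\sum_j\big(\xi_{ij}^n-G(\frac in,\frac jn)\big)b(\Xlimit_{\frac in}(s),\Xlimit_{\frac jn}(s))$, $T_4=\frac1n\sum_j G(\frac in,\frac jn)\big(b(\Xlimit_{\frac in}(s),\Xlimit_{\frac jn}(s))-\int b(\Xlimit_{\frac in}(s),y)\lawlimit_{\frac jn,s}(dy)\big)$, and $T_5=\frac1n\sum_j G(\frac in,\frac jn)\int b(\Xlimit_{\frac in}(s),y)\lawlimit_{\frac jn,s}(dy)-\int_I G(\frac in,v)\int b(\Xlimit_{\frac in}(s),y)\lawlimit_{v,s}(dy)\,dv$. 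Splitting $b$ coordinatewise and using Young's inequality gives $2D_i\cdot R_i\le 3K_b|D_i|^2+\frac{K_b}{n}\sum_j|D_j|^2$. The term $T_3$ vanishes under case~(i) of the standing assumptions, since then $\xi_{ij}^n=G_n(\frac in,\frac jn)=G(\frac in,\frac jn)$ by Condition~\ref{cond:G_n=G}; under case~(ii) it is, conditionally on the graphon system, a normalised sum of independent centred bounded variables, so $\Emb|T_3|^2\le C/n$. The fluctuation $T_4$ has, conditionally on $\Xlimit_{\frac in}$, independent centred summands for $j\ne i$ by mutual independence of the graphon particles $\{\Xlimit_u\}$, so $\Emb|T_4|^2\le C/n$ by the uniform fourth-moment bound of Proposition~\ref{prop:finite-moment-uniform} and the linear growth of $b$. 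Finally $T_5$ is a Riemann-sum error whose integrand, by Condition~\ref{cond:G-Lipschitz} and the uniform-in-time $W_2$-Lipschitz continuity of $v\mapsto\lawlimit_{v,s}$ from Corollary~\ref{cor:continuity-Lipschitz-special}(b), is piecewise Lipschitz with modulus $\le C(1+|\Xlimit_{\frac in}(s)|)$, whence $\Emb|T_5|^2\le C/n^2$; all constants here are uniform in $s$ by Proposition~\ref{prop:finite-moment-uniform}.

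\textbf{Part (a).} Bounding $2D_i\cdot T_k\le\delta|D_i|^2+\delta^{-1}|T_k|^2$ for $k=3,4,5$ with a small fixed $\delta>0$ and taking expectations yields
\[ \tfrac{d}{dt}\Emb|D_i(t)|^2 \le (-2c_0+3K_b+3\delta)\,\Emb|D_i(t)|^2 + \tfrac{K_b}{n}\sum_{j=1}^n\Emb|D_j(t)|^2 + \tfrac{C}{\delta n}. \]
Writing $M(t):=\max_{1\le i\le n}\Emb|D_i(t)|^2$, so that $\frac1n\sum_j\Emb|D_j(t)|^2\le M(t)$ and $M(0)=0$, and choosing $\delta$ with $3\delta<2\kappa=2c_0-4K_b$ (for instance $\delta=\kappa/3$), the effective decay rate $2c_0-3K_b-3\delta$ strictly exceeds $K_b$ because $c_0>2K_b$; a Gr\"onwall argument (conveniently via the integrating factor $e^{(2c_0-3K_b-3\delta)t}$ followed by taking the maximum over $i$) then gives $M(t)\le C/n$ for all $t\ge0$, which is exactly part~(a).

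\textbf{Parts (b) and (c).} These are deduced from part~(a) by elementary $W_2$-manipulations. For (c), the graphon particles $\Xlimit_{\frac{i_1}{n}},\dotsc,\Xlimit_{\frac{i_k}{n}}$ are mutually independent, so their joint law equals $\lawlimit_{\frac{i_1}{n},t}\otimes\dotsb\otimes\lawlimit_{\frac{i_k}{n},t}$, and the synchronous coupling bounds the squared $W_2$-distance of the two joint laws by $\sum_{\ell=1}^k\Emb|D_{i_\ell}(t)|^2\le kM(t)\le Ck/n$, giving the $C\sqrt k/\sqrt n$ rate. For (b), insert the intermediate measures $\frac1n\sum_i\delta_{\Xlimit_{\frac in}(t)}$ and $\frac1n\sum_i\lawlimit_{\frac in,t}$ between $\emppre^n(t)$ (resp.\ $\lawpre^n(t)$) and $\lawlimitmean(t)=\int_I\lawlimit_{v,t}\,dv$. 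Joint convexity of $W_2^2$ together with Corollary~\ref{cor:continuity-Lipschitz-special}(b) shows $W_2(\frac1n\sum_i\lawlimit_{\frac in,t},\lawlimitmean(t))\le C/\sqrt n$ uniformly in $t$; the coupling of part~(a) gives $W_2(\lawpre^n(t),\frac1n\sum_i\lawlimit_{\frac in,t})\le\sqrt{M(t)}\le C/\sqrt n$ and $\Emb W_2(\emppre^n(t),\frac1n\sum_i\delta_{\Xlimit_{\frac in}(t)})\le\sqrt{M(t)}\le C/\sqrt n$; and the remaining term $\Emb W_2(\frac1n\sum_i\delta_{\Xlimit_{\frac in}(t)},\frac1n\sum_i\lawlimit_{\frac in,t})$ is at most $Ca(n)$, uniformly in $t$, by Lemma~\ref{lem:Wasserstein-empirical} applied to the independent (non-identically distributed) variables $\Xlimit_{\frac in}(t)$ whose fourth moments are bounded uniformly in $i$ and $t$ by Proposition~\ref{prop:finite-moment-uniform}, with $a(n)$ as in \eqref{eq:an}. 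Adding these bounds (and using $n^{-1/2}\le a(n)$) yields both assertions in (b).

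\textbf{Main obstacle.} The delicate part is the bookkeeping of constants so that the dissipativity gap $\kappa>0$ survives every Young/Cauchy--Schwarz splitting --- in particular the coefficient $3K_b$ (not the naive $2K_b$) coming from $R_i$ --- together with the two uniform-in-time fluctuation estimates $\Emb|T_3|^2=O(1/n)$ and $\Emb|T_4|^2=O(1/n)$, whose proofs rest on the mutual independence of $\{\Xlimit_u\}$ and of $\{\xi_{ij}^n\}$ and on the uniform-in-time moment bounds; the rate $a(n)$ in (b) is furnished by the non-i.i.d.\ generalisation of the Fournier--Guillin estimate (Lemma~\ref{lem:Wasserstein-empirical}).
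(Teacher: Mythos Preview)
Your proof is correct and follows essentially the same route as the paper: the same synchronous coupling, the same decomposition of the interaction mismatch (the paper merges your $T_3$ and $T_4$ into a single fluctuation term $\Smcbar_s^{n,i,2}$, but the variance bound is identical), the same Riemann-sum estimate for $T_5$ via Condition~\ref{cond:G-Lipschitz} and Corollary~\ref{cor:continuity-Lipschitz-special}(b), and the same derivation of (b) and (c) from (a) through the intermediate measures and Lemma~\ref{lem:Wasserstein-empirical}. The only technical difference is in closing the differential inequality for part~(a): you use Young's inequality with a small $\delta$ and bound the maximum $M(t)$ directly via an integrating-factor Gr\"onwall argument, whereas the paper applies Cauchy--Schwarz to keep the forcing terms in the form $\tfrac{C}{\sqrt n}\sqrt{\bar\alpha_i^n(s)}$ and then invokes Lemma~\ref{lem:ODE} in two passes (first for the average $\bar\alpha^n$, then for each $\bar\alpha_i^n$); both routes hinge on the same gap $2c_0-4K_b=2\kappa>0$ and yield the same $C/n$ bound.
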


Proofs of Theorems \ref{thm:moment-convergence-uniform} and \ref{thm:moment-convergence-uniform-Lipschitz} are given in Section \ref{sec:pf-LLN}.

\begin{Remark}
	\label{rmk:an}
	The rate $a(n)$ is related to the upper bound of the Wasserstein distance between the empirical measure of independent random variables and their averaged distribution.
	It may be replaced by other function of $n$ that vanishes faster, as a result of which the constant $C$ in Theorem \ref{thm:moment-convergence-uniform-Lipschitz}(b) will be larger; see Remark \ref{rmk:choice-of-p}.
\end{Remark}

As an immediate consequence of the exponential ergodicity of the graphon particle system \eqref{eq:system} and the uniform-in-time convergence, one has the interchange of limits as $t \to \infty$ and $n \to \infty$.

\begin{Corollary}
	\label{cor:interchange}
	Suppose Conditions \ref{cond:G-continuous} and \ref{cond:G_n-step-graphon} hold.
	Then
	\begin{equation*}
		\lim_{n,t \to \infty} W_2(\lawpre^n(t),\lawlimitmean(\infty)) = 0, \quad \lim_{n,t \to \infty} \Emb W_2(\emppre^n(t),\lawlimitmean(\infty)) = 0.
	\end{equation*}	
\end{Corollary}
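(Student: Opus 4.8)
The plan is to combine the uniform-in-time law of large numbers from Theorem~\ref{thm:moment-convergence-uniform}(b) with the exponential ergodicity from Theorem~\ref{thm:exponential-ergodicity}(a) via the triangle inequality for $W_2$ on $\Pmc(\Rmb^d)$. Concretely, for every $n \in \Nmb$ and $t \ge 0$ we write
\begin{equation*}
	W_2(\lawpre^n(t),\lawlimitmean(\infty)) \le W_2(\lawpre^n(t),\lawlimitmean(t)) + W_2(\lawlimitmean(t),\lawlimitmean(\infty)),
\end{equation*}
and similarly, using that $W_2$ satisfies the triangle inequality pointwise (so it does after taking expectations of the random quantity $W_2(\emppre^n(t),\cdot)$),
\begin{equation*}
	\Emb W_2(\emppre^n(t),\lawlimitmean(\infty)) \le \Emb W_2(\emppre^n(t),\lawlimitmean(t)) + W_2(\lawlimitmean(t),\lawlimitmean(\infty)).
\end{equation*}

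Next I would bound each term uniformly. The second term on the right of each inequality is independent of $n$ and, by \eqref{eq:exponential-ergodicity-average}, is at most $\sqrt{4\kappa_1(c_0-K_b)/\kappa}\, e^{-\kappa t/2}$, which tends to $0$ as $t \to \infty$; in particular, given $\eps>0$ there is $T_\eps$ such that this term is below $\eps/2$ for all $t \ge T_\eps$. The first term is handled by Theorem~\ref{thm:moment-convergence-uniform}(b): since $\sup_{t \ge 0} W_2(\lawpre^n(t),\lawlimitmean(t)) \to 0$ and $\sup_{t \ge 0} \Emb W_2(\emppre^n(t),\lawlimitmean(t)) \to 0$ as $n \to \infty$, there is $N_\eps$ such that for all $n \ge N_\eps$ and \emph{all} $t \ge 0$ the first term is below $\eps/2$. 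Combining, for $n \ge N_\eps$ and $t \ge T_\eps$ both $W_2(\lawpre^n(t),\lawlimitmean(\infty))$ and $\Emb W_2(\emppre^n(t),\lawlimitmean(\infty))$ are below $\eps$, which is exactly the asserted joint limit as $n,t \to \infty$.

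The conditions needed are precisely those assumed in the corollary: Conditions~\ref{cond:G-continuous} and \ref{cond:G_n-step-graphon} are the hypotheses of Theorem~\ref{thm:moment-convergence-uniform}, and Theorem~\ref{thm:exponential-ergodicity} holds under the standing assumptions alone, so everything is in place. There is essentially no obstacle here — the entire content is the uniformity in $t$ of the LLN bound, which is already packaged in Theorem~\ref{thm:moment-convergence-uniform}(b); the one minor point worth stating explicitly is that the splitting of the $\eps$-budget works because the ergodicity estimate is uniform in $u$ (hence passes to the average $\lawlimitmean$) and the LLN estimate is uniform in $t$, so the two ``$\lim$'' directions genuinely decouple. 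I would therefore present this as a short two-line proof invoking the triangle inequality together with \eqref{eq:exponential-ergodicity-average} and Theorem~\ref{thm:moment-convergence-uniform}(b).
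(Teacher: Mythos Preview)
Your proposal is correct and matches the paper's own argument exactly: the paper's proof is a single line citing \eqref{eq:exponential-ergodicity-average} and Theorem~\ref{thm:moment-convergence-uniform}(b), which is precisely the triangle-inequality splitting you describe.
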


\begin{proof}
	This follows from \eqref{eq:exponential-ergodicity-average} and Theorem \ref{thm:moment-convergence-uniform}(b).
\end{proof}

\begin{Corollary}
	\phantomsection
	\label{cor:interchange-Lipschitz}
	Suppose Conditions \ref{cond:G-Lipschitz} and \ref{cond:G_n=G} hold.
	Then there exists $C \in (0,\infty)$ such that the following hold.
	\begin{enumerate}[(a)]
	\item 
		For all $n \in \Nmb$ and $t \ge 0$,
		\begin{equation*}
			W_2(\lawpre^n(t),\lawlimitmean(\infty)) \le C\left(\frac{1}{\sqrt{n}} + e^{-\kappa t/2}\right), \quad
			\Emb W_2(\emppre^n(t),\lawlimitmean(\infty)) \le C\left(a(n) + e^{-\kappa t/2}\right).
		\end{equation*}
		In particular,
		\begin{equation*}
			\lim_{n,t \to \infty} W_2(\lawpre^n(t),\lawlimitmean(\infty)) = 0, \quad \lim_{n,t \to \infty} \Emb W_2(\emppre^n(t),\lawlimitmean(\infty)) = 0.
		\end{equation*}	
	\item
		For all $n,k \in \Nmb$, $t \ge 0$ and any distinct $i_1,\dotsc,i_k \in \{1,\dotsc,n\}$,
		\begin{equation*}
			W_2(\Lmc(X_{i_1}^n(t), \dotsc, X_{i_k}^n(t)), \, \lawlimit_{\frac{i_1}{n},\infty} \otimes \dotsb \otimes \lawlimit_{\frac{i_k}{n},\infty}) \le C\sqrt{k} \left( \frac{1}{\sqrt{n}} + e^{-\kappa t/2} \right).
		\end{equation*}	
	\end{enumerate}
\end{Corollary}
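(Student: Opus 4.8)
The plan is to combine the uniform-in-time quantitative bounds from Theorem \ref{thm:moment-convergence-uniform-Lipschitz} with the exponential ergodicity of the graphon particle system from Theorem \ref{thm:exponential-ergodicity} via the triangle inequality for $W_2$. For part (a), first I would write, for each fixed $n$ and $t$,
\begin{equation*}
	W_2(\lawpre^n(t),\lawlimitmean(\infty)) \le W_2(\lawpre^n(t),\lawlimitmean(t)) + W_2(\lawlimitmean(t),\lawlimitmean(\infty)).
\end{equation*}
The first term on the right is bounded by $C/\sqrt{n}$ by Theorem \ref{thm:moment-convergence-uniform-Lipschitz}(b), uniformly in $t$; the second term is bounded by $\sqrt{4\kappa_1(c_0-K_b)/\kappa}\, e^{-\kappa t/2}$ by \eqref{eq:exponential-ergodicity-average}. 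Absorbing the constant $\sqrt{4\kappa_1(c_0-K_b)/\kappa}$ and the $C$ from Theorem \ref{thm:moment-convergence-uniform-Lipschitz} into a single constant $C$ yields the first claimed bound. The empirical-measure bound is handled identically, using $\Emb W_2(\emppre^n(t),\lawlimitmean(t)) \le Ca(n)$ from Theorem \ref{thm:moment-convergence-uniform-Lipschitz}(b) together with the (deterministic) bound $W_2(\lawlimitmean(t),\lawlimitmean(\infty)) \le C e^{-\kappa t/2}$; since $\Emb W_2(\emppre^n(t),\lawlimitmean(\infty)) \le \Emb W_2(\emppre^n(t),\lawlimitmean(t)) + W_2(\lawlimitmean(t),\lawlimitmean(\infty))$, the result follows. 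The ``in particular'' statements then follow by letting $n,t \to \infty$ in either order, since $a(n) \to 0$ and $e^{-\kappa t/2} \to 0$.

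For part (b), I would again use the triangle inequality, inserting the finite-time product measure $\lawlimit_{i_1/n,t} \otimes \dotsb \otimes \lawlimit_{i_k/n,t}$ as an intermediate point:
\begin{align*}
	W_2(\Lmc(X_{i_1}^n(t),\dotsc,X_{i_k}^n(t)),\, \lawlimit_{\frac{i_1}{n},\infty} \otimes \dotsb \otimes \lawlimit_{\frac{i_k}{n},\infty})
	&\le W_2(\Lmc(X_{i_1}^n(t),\dotsc,X_{i_k}^n(t)),\, \lawlimit_{\frac{i_1}{n},t} \otimes \dotsb \otimes \lawlimit_{\frac{i_k}{n},t}) \\
	&\quad + W_2(\lawlimit_{\frac{i_1}{n},t} \otimes \dotsb \otimes \lawlimit_{\frac{i_k}{n},t},\, \lawlimit_{\frac{i_1}{n},\infty} \otimes \dotsb \otimes \lawlimit_{\frac{i_k}{n},\infty}).
\end{align*}
The first term is $\le C\sqrt{k}/\sqrt{n}$ by the uniform-in-time POC bound in Theorem \ref{thm:moment-convergence-uniform-Lipschitz}(c). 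For the second term, using the additivity \eqref{eq:Wasserstein-product} of $W_2^2$ over product measures, it equals $\bigl(\sum_{\ell=1}^k W_2^2(\lawlimit_{i_\ell/n,t},\lawlimit_{i_\ell/n,\infty})\bigr)^{1/2}$, and each summand is bounded by $4\kappa_1(c_0-K_b)/\kappa \cdot e^{-\kappa t}$ by \eqref{eq:exponential-ergodicity}, so the whole term is $\le C\sqrt{k}\, e^{-\kappa t/2}$. Combining gives the claimed bound $C\sqrt{k}(1/\sqrt{n} + e^{-\kappa t/2})$.

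This corollary is genuinely routine given the machinery already in place — there is no real obstacle, just careful bookkeeping of constants. The only mild subtlety is making sure the bounds being combined are \emph{both} uniform in $t$ (for the $n$-dependent part) and the exponential decay is uniform in $n$ (which it is, since the constant in \eqref{eq:exponential-ergodicity} depends only on $\kappa$, $\kappa_1$, $c_0$, $K_b$, not on $n$); and, in part (b), remembering to invoke \eqref{eq:Wasserstein-product} rather than merely \eqref{eq:Wasserstein-joint}, which is legitimate precisely because both measures in the second term are product measures.
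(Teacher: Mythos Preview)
Your proposal is correct and follows essentially the same approach as the paper's proof, which tersely cites exactly the same ingredients: for part (a), Theorem \ref{thm:moment-convergence-uniform-Lipschitz}(b) together with \eqref{eq:exponential-ergodicity-average}; for part (b), Theorem \ref{thm:moment-convergence-uniform-Lipschitz}(c) together with \eqref{eq:Wasserstein-product} and \eqref{eq:exponential-ergodicity}. You have simply spelled out the triangle-inequality bookkeeping that the paper leaves implicit.
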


\begin{proof}
	(a) This follows from \eqref{eq:exponential-ergodicity-average} and Theorem \ref{thm:moment-convergence-uniform-Lipschitz}(b).		
	
	(b) This follows from Theorem \ref{thm:moment-convergence-uniform-Lipschitz}(c), \eqref{eq:Wasserstein-product} and \eqref{eq:exponential-ergodicity}.	
\end{proof}

From Theorem \ref{thm:exponential-ergodicity-n}(a) we know that the limiting distribution
\begin{equation*}
	\lawpre^n(\infty) := \lim_{t \to \infty} \lawpre^n(t) = \lim_{t \to \infty} \frac{1}{n} \sum_{i=1}^n \Lmc(X_i^n(t))
\end{equation*}
is well-defined.
The following corollary shows that $\lawpre^n(\infty)$ converges to the averaged long time distribution $\lawlimitmean(\infty)$ of the graphon particle system \eqref{eq:system}.

\begin{Corollary}
	\phantomsection
	\label{cor:interchange-infinity}
	\begin{enumerate}[(a)]
	\item 
		Suppose Conditions \ref{cond:G-continuous} and \ref{cond:G_n-step-graphon} hold. 
		Then
		\begin{equation*}
			\lim_{n \to \infty} W_2(\lawpre^n(\infty), \lawlimitmean(\infty)) = 0.
		\end{equation*}
	\item
		Suppose Conditions \ref{cond:G-Lipschitz} and \ref{cond:G_n=G} hold.
		Then there exists $C \in (0,\infty)$ such that
		\begin{equation*}
			W_2(\lawpre^n(\infty), \lawlimitmean(\infty)) \le \frac{C}{\sqrt{n}}.
		\end{equation*}	
		In addition, for all $n,k \in \Nmb$, $t \ge 0$ and any distinct $i_1,\dotsc,i_k \in \{1,\dotsc,n\}$,
		\begin{equation*}
			W_2(\lim_{t \to \infty} \Lmc(X_{i_1}^n(t), \dotsc, X_{i_k}^n(t)), \, \lawlimit_{\frac{i_1}{n},\infty} \otimes \dotsb \otimes \lawlimit_{\frac{i_k}{n},\infty}) \le \frac{C\sqrt{k}}{\sqrt{n}}.
		\end{equation*}	
	\end{enumerate}
\end{Corollary}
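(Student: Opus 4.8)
The plan is to derive this corollary by concatenating the exponential ergodicity results with the uniform-in-time LLN through a single triangle inequality in $W_2$, and then letting $t\to\infty$ at the end. The starting point, valid for every $t\ge 0$, is
\begin{equation*}
	W_2(\lawpre^n(\infty),\lawlimitmean(\infty)) \le W_2(\lawpre^n(\infty),\lawpre^n(t)) + W_2(\lawpre^n(t),\lawlimitmean(t)) + W_2(\lawlimitmean(t),\lawlimitmean(\infty)),
\end{equation*}
where the last term is controlled uniformly in $n$ by $\sqrt{4\kappa_1(c_0-K_b)/\kappa}\,e^{-\kappa t/2}$ thanks to \eqref{eq:exponential-ergodicity-average}.

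First I would record the elementary comparison between the averaged marginal $\lawpre^n(t)=\frac1n\sum_{i=1}^n\Lmc(X_i^n(t))$ and the joint law $\jointpre^n(t)$: since $W_2^2$ is jointly convex (average the optimal couplings of the one-dimensional marginals), and by \eqref{eq:Wasserstein-joint} applied coordinate by coordinate $\sum_i W_2^2(\Lmc(X_i^n(t)),\Lmc(X_i^n(s))) \le W_2^2(\jointpre^n(t),\jointpre^n(s))$, one gets $W_2^2(\lawpre^n(t),\lawpre^n(s)) \le \frac1n W_2^2(\jointpre^n(t),\jointpre^n(s))$ for all $s,t\ge0$. Combined with Theorem \ref{thm:exponential-ergodicity-n}(a) this shows that $(\lawpre^n(s))_{s\ge0}$ is $W_2$-Cauchy, so the limit $\lawpre^n(\infty)=\lim_{s\to\infty}\lawpre^n(s)$ is well defined (it is the average of the one-dimensional marginals of $\jointpre^n(\infty)$), and, letting $s\to\infty$ and using continuity of $W_2$, $W_2(\lawpre^n(\infty),\lawpre^n(t)) \le \frac{1}{\sqrt n}W_2(\jointpre^n(t),\jointpre^n(\infty)) \le \sqrt{4\kappa_2}\,e^{-\kappa t}$, uniformly in $n$. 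Thus the first term in the triangle inequality is handled uniformly in $n$.

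For part (a), the middle term is bounded by $\sup_{s\ge0}W_2(\lawpre^n(s),\lawlimitmean(s))$, which tends to $0$ as $n\to\infty$ by Theorem \ref{thm:moment-convergence-uniform}(b); taking $\limsup_{n\to\infty}$ in the triangle inequality the middle term drops, leaving, for every fixed $t$, $\limsup_{n\to\infty} W_2(\lawpre^n(\infty),\lawlimitmean(\infty)) \le \sqrt{4\kappa_2}\,e^{-\kappa t}+\sqrt{4\kappa_1(c_0-K_b)/\kappa}\,e^{-\kappa t/2}$, and letting $t\to\infty$ gives the claim. For the first assertion of part (b), the middle term is $\le C/\sqrt n$ by Theorem \ref{thm:moment-convergence-uniform-Lipschitz}(b), so the triangle inequality reads $W_2(\lawpre^n(\infty),\lawlimitmean(\infty)) \le C/\sqrt n + \sqrt{4\kappa_2}\,e^{-\kappa t} + \sqrt{4\kappa_1(c_0-K_b)/\kappa}\,e^{-\kappa t/2}$ for every $t\ge0$; since the left side is independent of $t$, letting $t\to\infty$ yields $W_2(\lawpre^n(\infty),\lawlimitmean(\infty)) \le C/\sqrt n$ (after renaming the constant).

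For the joint (propagation-of-chaos-at-stationarity) bound in part (b), I would start from Corollary \ref{cor:interchange-Lipschitz}(b), which under the same hypotheses gives $W_2(\Lmc(X_{i_1}^n(t),\dotsc,X_{i_k}^n(t)),\,\lawlimit_{\frac{i_1}{n},\infty}\otimes\dotsb\otimes\lawlimit_{\frac{i_k}{n},\infty}) \le C\sqrt k\,(1/\sqrt n + e^{-\kappa t/2})$, and simply let $t\to\infty$: the fixed-coordinate marginal $\Lmc(X_{i_1}^n(t),\dotsc,X_{i_k}^n(t))$ of $\jointpre^n(t)$ converges in $W_2$ as $t\to\infty$ (its $W_2$-distance to the corresponding marginal of $\jointpre^n(\infty)$ is dominated, via \eqref{eq:Wasserstein-joint}, by $W_2(\jointpre^n(t),\jointpre^n(\infty))\to0$ from Theorem \ref{thm:exponential-ergodicity-n}(a)), and since $\mu\mapsto W_2(\mu,\nu)$ is $1$-Lipschitz in $W_2$ the inequality passes to the limit. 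The only point requiring genuine care — and the step I expect to be the main (minor) obstacle — is this interchange of the $t\to\infty$ limit with the $W_2$-distance: identifying $\lawpre^n(\infty)$ (and the $k$-coordinate stationary marginals) as bona fide $W_2$-limits and bounding the averaged-marginal distance by the joint distance via joint convexity of $W_2^2$ and \eqref{eq:Wasserstein-joint}. Everything else is a direct assembly of results already established.
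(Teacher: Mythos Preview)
Your proposal is correct and follows essentially the same route as the paper. The only cosmetic difference is that the paper uses a two-term triangle inequality
\[
W_2(\lawpre^n(\infty),\lawlimitmean(\infty)) \le W_2(\lawpre^n(\infty),\lawpre^n(t)) + W_2(\lawpre^n(t),\lawlimitmean(\infty)),
\]
invoking Corollary~\ref{cor:interchange} (resp.\ Corollary~\ref{cor:interchange-Lipschitz}(a)) for the second term, whereas you split that second term further into $W_2(\lawpre^n(t),\lawlimitmean(t))+W_2(\lawlimitmean(t),\lawlimitmean(\infty))$ and cite Theorem~\ref{thm:moment-convergence-uniform}(b)/\ref{thm:moment-convergence-uniform-Lipschitz}(b) and \eqref{eq:exponential-ergodicity-average} directly; since Corollaries~\ref{cor:interchange} and~\ref{cor:interchange-Lipschitz} are themselves just this combination, the two arguments are the same. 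Your handling of the first term via convexity of $W_2^2$ and \eqref{eq:Wasserstein-joint} matches the paper's bound $W_2^2(\lawpre^n(\infty),\lawpre^n(t))\le \frac{1}{n}W_2^2(\jointpre^n(\infty),\jointpre^n(t))\le Ce^{-2\kappa t}$, and your treatment of the $k$-marginal statement by letting $t\to\infty$ in Corollary~\ref{cor:interchange-Lipschitz}(b) is exactly what the paper does.
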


\begin{proof}
	Write
	\begin{equation*}
		W_2(\lawpre^n(\infty), \lawlimitmean(\infty)) \le W_2(\lawpre^n(\infty), \lawpre^n(t)) + W_2(\lawpre^n(t), \lawlimitmean(\infty)).
	\end{equation*}
	Using the convexity of $W_2^2(\cdot,\cdot)$, \eqref{eq:Wasserstein-joint} and \eqref{eq:exponential-ergodicity-n}, we have
	\begin{align*}
		W_2^2(\lawpre^n(\infty),\lawpre^n(t)) \le \frac{1}{n} \sum_{i=1}^n W_2^2(\lim_{s \to \infty}\Lmc(X_i^n(s)),\Lmc(X_i^n(t))) \le \frac{1}{n} W_2^2(\jointpre^n(\infty),\jointpre^n(t)) \le Ce^{-2\kappa t}
	\end{align*}
	for each $t \ge 0$.
	Combining these with Corollary \ref{cor:interchange} (resp.\ Corollary \ref{cor:interchange-Lipschitz}(a)) gives part (a) (resp.\ the first statement in part (b)).	
	The second statement in part (b) follows by taking $t \to \infty$ in Corollary \ref{cor:interchange-Lipschitz}(b).
\end{proof}

\section{Euler discretization}
\label{sec:Euler}

In this section we analyze the Euler discretization of the system \eqref{eq:system-n} with step size $h > 0$, namely, with $s_h := \lfl \frac{s}{h} \rfl h$,
\begin{align}
	X_i^{n,h}(t) & = \Xlimit_{\frac{i}{n}}(0) + \int_0^t \left( f(X_i^{n,h}(s_h)) + \frac{1}{n} \sum_{j=1}^n \xi_{ij}^n b(X_i^{n,h}(s_h),X_j^{n,h}(s_h)) \right) ds
	\notag \\
	& \quad + \sigma B_{\frac{i}{n}}(t), \quad i \in \{1,\dotsc,n\}, \: t \ge 0.
	\label{eq:system-n-h}
\end{align}

The following theorem shows the convergence of the Euler scheme, uniformly in time $t$ and the number of particles $n$.
The proof is given in Section \ref{sec:pf-Euler}.

\begin{Theorem}
	\label{thm:moment-convergence-uniform-Euler}
	There exist $C,h_0 \in (0,\infty)$ such that
	$$\sup_{n \in \Nmb} \max_{i=1,\dotsc,n} \sup_{t \ge 0} \Emb |X_i^{n,h}(t) - X_i^n(t)|^2 \le Ch, \quad \forall \, h \in (0,h_0).$$
\end{Theorem}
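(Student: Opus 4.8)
The plan is to set $e_i(t) := X_i^{n,h}(t) - X_i^n(t)$, write down the integral equation for $e_i$, and derive a differential inequality for the quenched quantity $\Phi(t) := \frac{1}{n}\sum_{i=1}^n \Emb^{n,\xi^n}|e_i(t)|^2$ (or directly for $\max_i \Emb^{n,\xi^n}|e_i(t)|^2$) that exhibits linear-in-time exponential decay at rate governed by $\kappa = c_0 - 2K_b > 0$, plus a forcing term of size $O(h)$ coming from the frozen drift evaluated at $s_h$ instead of $s$. First I would compute, using Itô's formula applied to $|e_i(t)|^2$ (the noise cancels since both systems share the same Brownian motion $B_{i/n}$), that
\begin{align*}
	\frac{d}{dt}\Emb^{n,\xi^n}|e_i(t)|^2
	&= 2\,\Emb^{n,\xi^n}\Big[ e_i(t)\cdot\big(f(X_i^{n,h}(t_h)) - f(X_i^n(t))\big)\Big] \\
	&\quad + \frac{2}{n}\sum_{j=1}^n \xi_{ij}^n\,\Emb^{n,\xi^n}\Big[ e_i(t)\cdot\big(b(X_i^{n,h}(t_h),X_j^{n,h}(t_h)) - b(X_i^n(t),X_j^n(t))\big)\Big].
\end{align*}
Then I would split each difference into a ``discretization part'' ($X^{n,h}(t_h)$ vs.\ $X^{n,h}(t)$) and a ``genuine error part'' ($X^{n,h}(t)$ vs.\ $X^n(t)$); on the genuine-error part I would use the dissipativity \eqref{eq:c0} to extract $-c_0|e_i(t)|^2$ from the $f$-term and the Lipschitz bound \eqref{eq:kappa}-style estimate $2K_b$ from the $b$-term (after averaging over $i$ and using $\frac1n\sum_j\xi_{ij}^n\le 1$ and Cauchy--Schwarz), yielding a net coefficient $-\kappa$ on $\Phi(t)$; the discretization part is controlled by $K_f|X_i^{n,h}(t)-X_i^{n,h}(t_h)| + K_b(\dots)$ together with Young's inequality $2a b \le \eps a^2 + \eps^{-1}b^2$ to absorb the $|e_i(t)|$ factor.

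The key auxiliary estimate, which I would isolate as a short lemma (or inline), is a one-step bound on the frozen increment: $\sup_n\max_i\sup_{t\ge0}\Emb^{n,\xi^n}|X_i^{n,h}(t) - X_i^{n,h}(t_h)|^2 \le C h$ for all small $h$. This follows because on $[t_h,t]$ with $t - t_h \le h$ one has $X_i^{n,h}(t)-X_i^{n,h}(t_h) = (\text{drift})\cdot(t-t_h) + \sigma(B_{i/n}(t)-B_{i/n}(t_h))$, the Brownian increment contributes $\|\sigma\|^2(t-t_h)\le \|\sigma\|^2 h$, and the drift term contributes $h^2$ times a second moment of $f(X_i^{n,h}(t_h)) + \frac1n\sum_j \xi_{ij}^n b(\cdots)$, which by linear growth (Remark \ref{rmk:linear-growth}(b)) is bounded by $C(1 + \sup_t\Emb^{n,\xi^n}|X_i^{n,h}(t)|^2)$. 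This in turn requires a uniform-in-time second-moment bound on the Euler scheme itself, namely an analogue of Proposition \ref{prop:finite-moment-uniform-n} for $X_i^{n,h}$: I would prove $\sup_n\max_i\sup_{t\ge0}\Emb^{n,\xi^n}|X_i^{n,h}(t)|^2 \le C$ for $h < h_0$ by the same Itô/dissipativity computation (the $h$-mismatch in the drift only perturbs the dissipativity constant by $O(h)$, so for $h_0$ small enough the negative drift survives).

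Assembling: the differential inequality takes the form $\Phi'(t) \le -\kappa\,\Phi(t) + C h$ (after choosing the Young parameter $\eps$ proportional to $\kappa$ and absorbing), with $\Phi(0) = 0$ since both systems start from $\Xlimit_{i/n}(0)$; Grönwall then gives $\Phi(t) \le \frac{Ch}{\kappa}(1 - e^{-\kappa t}) \le C'h$ uniformly in $t$ and $n$. To upgrade from the average $\Phi$ to $\max_i \Emb^{n,\xi^n}|e_i(t)|^2$ I would run the same estimate on $\Emb^{n,\xi^n}|e_i(t)|^2$ directly, bounding the interaction term by $\frac{K_b}{n}\sum_j \xi_{ij}^n(|e_i| + |e_j|)$-type expressions and using that $\frac1n\sum_j \Emb^{n,\xi^n}|e_j|^2 = \Phi(t)\le C'h$ is already controlled; this closes a scalar inequality $\frac{d}{dt}\Emb^{n,\xi^n}|e_i(t)|^2 \le -\kappa_0\Emb^{n,\xi^n}|e_i(t)|^2 + C''h$ and again Grönwall gives the claim, first quenched (a.s.) and then, taking expectations, annealed. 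The main obstacle is bookkeeping: making sure the dissipativity gain $-\kappa$ is not eroded below zero after (i) the $O(h)$ perturbation from the time-mismatch in the drift coefficients and (ii) the Young-inequality cross terms, which forces the threshold $h_0$ and the choice of $\eps$; everything else is a routine Itô/Grönwall argument paralleling the proofs of Propositions \ref{prop:finite-moment-uniform-n} and Theorem \ref{thm:exponential-ergodicity-n}.
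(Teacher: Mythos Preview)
Your proposal is correct and follows essentially the same route as the paper: an auxiliary one-step bound $\Emb|X_i^{n,h}(t)-X_i^{n,h}(t_h)|^2\le Ch$ (together with a uniform second-moment bound on the Euler scheme, where the threshold $h_0$ arises), then It\^o's formula on $|e_i|^2$, splitting the drift differences into a discretization part and a genuine-error part, first closing the averaged quantity $\Phi(t)$ and then bootstrapping to $\max_i$. The only cosmetic differences are that the paper works annealed throughout (rather than quenched first), keeps the cross term in the Cauchy--Schwarz form $C\sqrt{h}\sqrt{\gamma}$ and invokes its ODE comparison Lemma~\ref{lem:ODE} (with $a_2=C\sqrt{h}$, $a_3=0$) in place of your Young-inequality linearization plus Gr\"onwall; both yield the same $O(h)$ bound.
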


Theorem \ref{thm:moment-convergence-uniform-Euler} and Corollary \ref{cor:interchange} guarantee that the Euler scheme \eqref{eq:system-n-h} provides a good numerical approximation to the graphon particle system \eqref{eq:system} uniformly in time, as shown in the following corollary.
Let
\begin{align*}
	\emppre^{n,h}(t) := \frac{1}{n} \sum_{i=1}^n \delta_{X_i^{n,h}(t)}, 
	\quad \lawpre^{n,h}(t) := \frac{1}{n} \sum_{i=1}^n \Lmc(X_i^{n,h}(t)) = \Emb \emppre^{n,h}(t).
\end{align*}
	
\begin{Corollary}
	\label{cor:uniform-approximation-Euler}
	Suppose Conditions \ref{cond:G-continuous} and \ref{cond:G_n-step-graphon} hold.
	Then there exist $C,h_0 \in (0,\infty)$ such that
	\begin{equation*}
		\limsup_{n \to \infty} \sup_{t \ge 0} W_2(\lawpre^{n,h}(t),\lawlimitmean(t)) \le C\sqrt{h}, \quad \limsup_{n \to \infty} \sup_{t \ge 0} \Emb W_2(\emppre^{n,h}(t),\lawlimitmean(t)) \le C\sqrt{h},
	\end{equation*}
	and		
	\begin{equation*}
		\limsup_{n,t \to \infty} W_2(\lawpre^{n,h}(t),\lawlimitmean(\infty)) \le C\sqrt{h}, \quad \limsup_{n,t \to \infty} \Emb W_2(\emppre^{n,h}(t),\lawlimitmean(\infty)) \le C\sqrt{h},
	\end{equation*}
	for all $h \in (0,h_0)$.
\end{Corollary}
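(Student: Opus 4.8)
\textbf{Proof plan for Corollary \ref{cor:uniform-approximation-Euler}.}

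The strategy is to decompose each Wasserstein distance by the triangle inequality into an \emph{Euler discretization error} and a \emph{large-population error}, and then invoke the two results we already have in hand: Theorem \ref{thm:moment-convergence-uniform-Euler} (uniform-in-$(n,t)$ control of the Euler error) and Theorem \ref{thm:moment-convergence-uniform} together with Corollary \ref{cor:interchange} (the uniform-in-time LLN and the interchange of limits). First I would write, for the deterministic averaged marginal,
\begin{equation*}
	W_2(\lawpre^{n,h}(t),\lawlimitmean(t)) \le W_2(\lawpre^{n,h}(t),\lawpre^n(t)) + W_2(\lawpre^n(t),\lawlimitmean(t)).
\end{equation*}
The second term on the right is $\sup_{t\ge 0} W_2(\lawpre^n(t),\lawlimitmean(t))$, which tends to $0$ as $n \to \infty$ by Theorem \ref{thm:moment-convergence-uniform}(b), so it disappears in the $\limsup_{n\to\infty}$. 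For the first term, I would use the convexity of $W_2^2(\cdot,\cdot)$ (as already used in the proof of Corollary \ref{cor:interchange-infinity}) to bound
\begin{equation*}
	W_2^2(\lawpre^{n,h}(t),\lawpre^n(t)) \le \frac{1}{n}\sum_{i=1}^n W_2^2(\Lmc(X_i^{n,h}(t)),\Lmc(X_i^n(t))) \le \frac{1}{n}\sum_{i=1}^n \Emb |X_i^{n,h}(t)-X_i^n(t)|^2 \le Ch
\end{equation*}
uniformly in $n$ and $t$, by Theorem \ref{thm:moment-convergence-uniform-Euler}; hence $\sup_{t\ge 0} W_2(\lawpre^{n,h}(t),\lawpre^n(t)) \le \sqrt{Ch}$, and taking $\limsup_{n\to\infty}$ of the first displayed inequality gives the first claimed bound with constant $\sqrt{C}$.

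For the empirical-measure statement, the same split works with the random measure $\emppre^{n,h}(t)$:
\begin{equation*}
	\Emb W_2(\emppre^{n,h}(t),\lawlimitmean(t)) \le \Emb W_2(\emppre^{n,h}(t),\emppre^n(t)) + \Emb W_2(\emppre^n(t),\lawlimitmean(t)).
\end{equation*}
The second term vanishes uniformly in $t$ as $n\to\infty$ by Theorem \ref{thm:moment-convergence-uniform}(b). For the first, the empirical measures $\emppre^{n,h}(t)$ and $\emppre^n(t)$ share a natural coupling (the same index set, driven by the same Brownian motions and initial data), so I would bound
\begin{equation*}
	W_2^2(\emppre^{n,h}(t),\emppre^n(t)) \le \frac{1}{n}\sum_{i=1}^n |X_i^{n,h}(t)-X_i^n(t)|^2,
\end{equation*}
take expectations, apply Jensen to pass $\Emb$ inside the square root, and again invoke Theorem \ref{thm:moment-convergence-uniform-Euler} to get $\Emb W_2(\emppre^{n,h}(t),\emppre^n(t)) \le \sqrt{Ch}$ uniformly in $n,t$. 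This yields the second bound in the first display of the corollary. The bounds in the second display (as $n,t\to\infty$) then follow verbatim by replacing $\lawlimitmean(t)$ with $\lawlimitmean(\infty)$ and using Corollary \ref{cor:interchange} in place of Theorem \ref{thm:moment-convergence-uniform}(b) for the large-population term, with $h_0$ and $C$ taken from Theorem \ref{thm:moment-convergence-uniform-Euler}.

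There is no genuine obstacle here: every ingredient has already been established, and the argument is purely a triangle-inequality assembly. The only points requiring mild care are (i) making sure the coupling used to bound $W_2(\emppre^{n,h}(t),\emppre^n(t))$ is the synchronous one so that the bound is pathwise before taking expectations, and (ii) keeping track that the constant $C$ and threshold $h_0$ in the conclusion are exactly those furnished by Theorem \ref{thm:moment-convergence-uniform-Euler} (up to an absolute multiplicative factor from the square root), independent of $n$ and $t$. I would write this out in a few lines.
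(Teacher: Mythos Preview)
Your proposal is correct and follows essentially the same approach as the paper: a triangle-inequality split into an Euler error (controlled by Theorem \ref{thm:moment-convergence-uniform-Euler}) and a large-population error (controlled by Theorem \ref{thm:moment-convergence-uniform}(b) or Corollary \ref{cor:interchange}). The only cosmetic difference is that the paper bounds $W_2(\lawpre^{n,h}(t),\lawpre^n(t))$ directly via the single coupling $\pi = \frac{1}{n}\sum_{i=1}^n \Lmc(X_i^{n,h}(t),X_i^n(t))$, whereas you route through convexity of $W_2^2$ first; both give the identical bound $\frac{1}{n}\sum_i \Emb|X_i^{n,h}(t)-X_i^n(t)|^2$.
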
	

\begin{proof}
	Let $h_0 \in (0,\infty)$ be as in Theorem \ref{thm:moment-convergence-uniform-Euler}.
	Taking $\pi = \frac{1}{n} \sum_{i=1}^n \Lmc(X_i^{n,h}(t),X_i^n(t))$ as the coupling of $\lawpre^{n,h}(t)$ and $\lawpre^n(t)$ gives
	\begin{equation}
		\label{eq:uniform-approximation-Euler-1}
		\sup_{n \in \Nmb} \sup_{t \ge 0} W_2(\lawpre^{n,h}(t),\lawpre^n(t)) \le \sup_{n \in \Nmb} \sup_{t \ge 0} \left(\frac{1}{n} \sum_{i=1}^n \Emb |X_i^{n,h}(t)-X_i^n(t)|^2\right)^{1/2} \le C\sqrt{h},
	\end{equation}
	and taking $\pi = \frac{1}{n} \sum_{i=1}^n \delta_{(X_i^{n,h}(t),X_i^n(t))}$ as the coupling of $\emppre^{n,h}(t)$ and $\emppre^n(t)$ gives
	\begin{equation}
		\label{eq:uniform-approximation-Euler-2}
		\sup_{n \in \Nmb} \sup_{t \ge 0} \Emb W_2(\emppre^{n,h}(t),\emppre^n(t)) \le \sup_{n \in \Nmb} \sup_{t \ge 0} \left(\frac{1}{n} \sum_{i=1}^n \Emb |X_i^{n,h}(t)-X_i^n(t)|^2\right)^{1/2} \le C\sqrt{h},
	\end{equation}
	for all $h \in (0,h_0)$.
	Combining these with Theorem \ref{thm:moment-convergence-uniform}(b) (resp.\ Corollary \ref{cor:interchange}) gives the first (resp.\ second) statement. 
	This completes the proof.
\end{proof}

As stated in the second question in the introduction, we are also interested in the precise rate of convergence of the Euler scheme, as an approximation to the graphon particle system \eqref{eq:system} and its stationary distribution.
This is answered in the following corollary. 

\begin{Corollary}
	\label{cor:uniform-approximation-Euler-Lipschitz}
	Suppose Conditions \ref{cond:G-Lipschitz} and \ref{cond:G_n=G} hold.
	Then there exist $C,h_0 \in (0,\infty)$ such that the following hold.
	\begin{enumerate}[(a)]
	\item 
		For all $h \in (0,h_0)$, $n \in \Nmb$ and $t \ge 0$,
		\begin{equation*}
			W_2(\lawpre^{n,h}(t),\lawlimitmean(t)) \le C\left(\frac{1}{\sqrt{n}} + \sqrt{h} \right), \quad \Emb W_2(\emppre^{n,h}(t),\lawlimitmean(t)) \le C\left(a(n) + \sqrt{h}\right),
		\end{equation*}
		and
		\begin{equation*}
			W_2(\lawpre^{n,h}(t),\lawlimitmean(\infty)) \le C\left(\frac{1}{\sqrt{n}} + \sqrt{h} + e^{-\kappa t/2}\right), \:\Emb W_2(\emppre^{n,h}(t),\lawlimitmean(\infty)) \le C\left(a(n) + \sqrt{h} + e^{-\kappa t/2}\right).
		\end{equation*}
	\item
		For all $n,k \in \Nmb$, $t \ge 0$ and any distinct $i_1,\dotsc,i_k \in \{1,\dotsc,n\}$,
		\begin{equation*}
			W_2(\Lmc(X_{i_1}^{n,h}(t), \dotsc, X_{i_k}^{n,h}(t)), \, \lawlimit_{\frac{i_1}{n},t} \otimes \dotsb \otimes \lawlimit_{\frac{i_k}{n},t}) \le C\sqrt{k} \left( \frac{1}{\sqrt{n}} + \sqrt{h} \right),
		\end{equation*}	 
		and
		\begin{equation*}
			W_2(\Lmc(X_{i_1}^{n,h}(t), \dotsc, X_{i_k}^{n,h}(t)), \, \lawlimit_{\frac{i_1}{n},\infty} \otimes \dotsb \otimes \lawlimit_{\frac{i_k}{n},\infty}) \le C\sqrt{k} \left( \frac{1}{\sqrt{n}} + \sqrt{h} + e^{-\kappa t/2} \right).
		\end{equation*}		
	\end{enumerate}
\end{Corollary}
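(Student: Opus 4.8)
The plan is to decompose each quantity by the triangle inequality into an \emph{Euler error} ($\lawpre^{n,h}(t)$ versus $\lawpre^n(t)$, controlled by Theorem \ref{thm:moment-convergence-uniform-Euler}), a \emph{finite-particle error} ($\lawpre^n(t)$ versus $\lawlimitmean(t)$, controlled \emph{with explicit $n$-rates} by Theorem \ref{thm:moment-convergence-uniform-Lipschitz}), and, for the statements involving the stationary measures, an \emph{ergodic error} ($\lawlimitmean(t)$ versus $\lawlimitmean(\infty)$, controlled by \eqref{eq:exponential-ergodicity-average}). This is precisely the scheme used in the proofs of Corollaries \ref{cor:uniform-approximation-Euler} and \ref{cor:interchange-Lipschitz}; the only new input is that the quantitative bounds of Theorem \ref{thm:moment-convergence-uniform-Lipschitz} replace the merely qualitative convergence of Theorem \ref{thm:moment-convergence-uniform}.

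For part (a), I would first reuse the couplings $\pi=\frac1n\sum_i\Lmc(X_i^{n,h}(t),X_i^n(t))$ and $\pi=\frac1n\sum_i\delta_{(X_i^{n,h}(t),X_i^n(t))}$ from the proof of Corollary \ref{cor:uniform-approximation-Euler} to obtain, via Theorem \ref{thm:moment-convergence-uniform-Euler},
\[
	\sup_{n\in\Nmb}\sup_{t\ge0}W_2(\lawpre^{n,h}(t),\lawpre^n(t))\le C\sqrt h,\qquad \sup_{n\in\Nmb}\sup_{t\ge0}\Emb W_2(\emppre^{n,h}(t),\emppre^n(t))\le C\sqrt h.
\]
Adding the bounds $W_2(\lawpre^n(t),\lawlimitmean(t))\le C/\sqrt n$ and $\Emb W_2(\emppre^n(t),\lawlimitmean(t))\le Ca(n)$ from Theorem \ref{thm:moment-convergence-uniform-Lipschitz}(b) yields the first display of (a); adding $W_2(\lawlimitmean(t),\lawlimitmean(\infty))\le Ce^{-\kappa t/2}$ from \eqref{eq:exponential-ergodicity-average} yields the second.

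For part (b), the key observation is that the Euler scheme \eqref{eq:system-n-h} is driven by the same initial data $\Xlimit_{i/n}(0)$, Brownian motions $B_{i/n}$, and weights $\xi_{ij}^n$ as \eqref{eq:system-n}, so the joint law $\Lmc\big((X_{i_l}^{n,h}(t))_{l=1}^k,(X_{i_l}^n(t))_{l=1}^k\big)$ is an admissible coupling and, by \eqref{eq:Wasserstein-joint} together with Theorem \ref{thm:moment-convergence-uniform-Euler},
\[
	W_2^2\big(\Lmc(X_{i_1}^{n,h}(t),\dotsc,X_{i_k}^{n,h}(t)),\,\Lmc(X_{i_1}^n(t),\dotsc,X_{i_k}^n(t))\big)\le\sum_{l=1}^k\Emb|X_{i_l}^{n,h}(t)-X_{i_l}^n(t)|^2\le Ckh.
\]
Combining this with the POC estimate of Theorem \ref{thm:moment-convergence-uniform-Lipschitz}(c) gives the first display of (b); for the second I would additionally use \eqref{eq:Wasserstein-product} and \eqref{eq:exponential-ergodicity} to estimate $W_2^2(\lawlimit_{i_1/n,t}\otimes\dotsb\otimes\lawlimit_{i_k/n,t},\,\lawlimit_{i_1/n,\infty}\otimes\dotsb\otimes\lawlimit_{i_k/n,\infty})=\sum_{l=1}^kW_2^2(\lawlimit_{i_l/n,t},\lawlimit_{i_l/n,\infty})\le Cke^{-\kappa t}$, and then apply the triangle inequality once more.

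I do not expect a genuine obstacle: every ingredient is already in hand, and the argument is essentially bookkeeping. The two points requiring care are (i) tracking the $\sqrt k$ factor, which appears because Wasserstein-$2$ distances between product measures add in squares (\eqref{eq:Wasserstein-joint}, \eqref{eq:Wasserstein-product}) while the Euler bound $Ch$ is uniform over the particle index, and (ii) verifying that a single constant $C$ can be chosen independently of $n$, $k$, $t$, and $h\in(0,h_0)$ — immediate, since each cited estimate is already uniform in these parameters.
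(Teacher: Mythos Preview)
Your proposal is correct and follows essentially the same approach as the paper: the paper likewise combines the Euler bounds \eqref{eq:uniform-approximation-Euler-1}--\eqref{eq:uniform-approximation-Euler-2} with Theorem \ref{thm:moment-convergence-uniform-Lipschitz}(b) and Corollary \ref{cor:interchange-Lipschitz}(a) for part (a), and for part (b) uses the same joint coupling to get $W_2\le C\sqrt{kh}$ and then invokes Theorem \ref{thm:moment-convergence-uniform-Lipschitz}(c) and Corollary \ref{cor:interchange-Lipschitz}(b). One minor quibble: your citation of \eqref{eq:Wasserstein-joint} in part (b) is superfluous (that inequality goes the other direction); the bound $W_2^2\le\sum_l\Emb|X_{i_l}^{n,h}(t)-X_{i_l}^n(t)|^2$ follows directly from the definition of $W_2$ with the explicit coupling you wrote down.
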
	

\begin{proof}
	(a) Combining \eqref{eq:uniform-approximation-Euler-1}, \eqref{eq:uniform-approximation-Euler-2} and Theorem \ref{thm:moment-convergence-uniform-Lipschitz}(b) (resp.\ Corollary \ref{cor:interchange-Lipschitz}(a)) gives the first (resp.\ last) two statements.
	
	(b) Taking
		$\pi = \Lmc \left( \left(X_{i_1}^{n,h}(t), \dotsc, X_{i_k}^{n,h}(t)\right), \left(X_{i_1}^n(t), \dotsc, X_{i_k}^n(t)\right) \right)$,
	we have
	\begin{align*}
		W_2(\Lmc(X_{i_1}^{n,h}(t), \dotsc, X_{i_k}^{n,h}(t)), \, \Lmc(X_{i_1}^n(t), \dotsc, X_{i_k}^n(t))) & \le \left( \sum_{j=1}^k \Emb |X_{i_j}^{n,h}(t) - X_{i_j}^n(t)|^2 \right)^{1/2} \\
		& \le C\sqrt{kh},
	\end{align*}
	where the last line uses Theorem \ref{thm:moment-convergence-uniform-Euler}.
	Combining this with Theorem \ref{thm:moment-convergence-uniform-Lipschitz}(c) (resp.\ Corollary \ref{cor:interchange-Lipschitz}(b)) gives the first (resp.\ second) statement.
	This completes the proof.
\end{proof}

\begin{Remark}
	The constant $C$ here could be made explicit, but we didn't explore that direction.
\end{Remark}

\section{Proofs}
\label{sec:pf}

We first present an elementary result that will be used in several later proofs.

\begin{Lemma}
	\label{lem:ODE}
	Let $y \colon [0,\infty) \to [0, \infty)$ be a non-negative differentiable function.
	Suppose 
	\begin{equation*}
		y(t)-y(r) \le -a_1 \int_r^t y(s)\,ds + a_2 \int_r^t \sqrt{y(s)} \,ds + \int_r^t a_3(s)\,ds, \quad \forall \, t > r \ge 0,
	\end{equation*} 
	for some $a_1 > 0$, $a_2 \in \Rmb$ and non-negative and continuous function $a_3$.
	Then 
	\begin{equation*}
		y(t) \le \max\left\{y(0), \left(\frac{a_2}{2a_1} + \sqrt{\frac{\sup_{0 \le s \le t} a_3(s)}{a_1} + \frac{a_2^2}{4a_1^2}}\right)^2 \right\}, \quad \forall \, t \ge 0.
	\end{equation*}
	In particular, $y(t) \le \max\left\{y(0), \frac{\sup_{0 \le s \le t} a_3(s)}{a_1} \right\}$ if $a_2=0$.
\end{Lemma}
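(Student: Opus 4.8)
The plan is to convert the integral inequality into a pointwise differential inequality and then run a barrier (first-crossing) argument. First I would fix $t>r\ge 0$, divide the hypothesis by $t-r$, and let $t\downarrow r$: since $y$ is differentiable (hence continuous) and $a_3$ is continuous, the averaged integrals on the right converge to the integrands evaluated at $r$, so one obtains
\[
	y'(r) \le -a_1 y(r) + a_2\sqrt{y(r)} + a_3(r), \qquad \forall\, r\ge 0.
\]
This is consistent even where $y(r)=0$, such a point being a minimum of the nonnegative differentiable function $y$, so $y'(r)=0$ there.

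Next, fixing $t\ge 0$ (the final bound follows by running the argument on $[0,t]$ and reading off $A:=\sup_{0\le s\le t} a_3(s)$), for each $\epsilon>0$ I would introduce the larger root
\[
	\rho_\epsilon := \frac{a_2}{2a_1} + \sqrt{\frac{A+\epsilon}{a_1} + \frac{a_2^2}{4a_1^2}}
\]
of the quadratic $q(x):=-a_1 x^2 + a_2 x + (A+\epsilon)$, and set $M_\epsilon := \rho_\epsilon^2$. Since $a_1>0$ and $A+\epsilon>0$, one checks $\rho_\epsilon>0$ and $\rho_\epsilon > a_2/(2a_1)$ (the vertex of $q$), so $q$ is strictly decreasing on $[\rho_\epsilon,\infty)$; hence for every $y\ge M_\epsilon$ we get $-a_1 y + a_2\sqrt{y} + A = q(\sqrt{y}) - \epsilon \le q(\rho_\epsilon) - \epsilon = -\epsilon < 0$. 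The structural fact to extract is: above the level $M_\epsilon$, the drift $-a_1 y + a_2\sqrt{y} + a_3$ is bounded above by $-\epsilon$.

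With this in hand I would argue by contradiction that $y(s)\le \beta_\epsilon := \max\{y(0),M_\epsilon\}$ for all $s\in[0,t]$. If not, put $s^\ast := \inf\{s\in[0,t] : y(s) > \beta_\epsilon\}$; continuity forces $y(s^\ast)=\beta_\epsilon\ge M_\epsilon$, while there are $s_k\downarrow s^\ast$ with $y(s_k)>\beta_\epsilon$. But the differential inequality and the previous step give $y'(s^\ast) \le -a_1 y(s^\ast) + a_2\sqrt{y(s^\ast)} + a_3(s^\ast) \le -\epsilon < 0$, so $y(s) < y(s^\ast)=\beta_\epsilon$ for all $s$ slightly larger than $s^\ast$, contradicting the existence of the $s_k$. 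Letting $\epsilon\downarrow 0$ then yields $M_\epsilon\downarrow \left(\frac{a_2}{2a_1}+\sqrt{\frac{A}{a_1}+\frac{a_2^2}{4a_1^2}}\right)^2$ and hence the claimed bound at time $t$; the case $a_2=0$ is the immediate specialization in which this limit equals $A/a_1$.

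I expect the only real subtlety to be this first-crossing step: the hypothesis delivers only the \emph{non-strict} inequality $y'\le -a_1 y + a_2\sqrt{y} + a_3$, so at a crossing time one would a priori get merely $y'(s^\ast)\le 0$, which is not enough to force a contradiction. Enlarging the barrier by $\epsilon$ is precisely the device that restores strict negativity of the drift at the barrier level and closes the argument; the rest is elementary. (An alternative would be a direct ODE comparison of $y$ with the solution of $z' = -a_1 z + a_2\sqrt{z} + A$ started at $\max\{y(0),M\}$, but that route requires care about uniqueness of that ODE near $z=0$, which the barrier argument avoids.)
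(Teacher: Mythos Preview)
Your proposal is correct and takes essentially the same approach as the paper: reduce to the pointwise inequality $y'(t)\le -a_1 y(t)+a_2\sqrt{y(t)}+a_3(t)$, complete the square (equivalently, locate the positive root of $-a_1 x^2+a_2 x+A$) to identify the threshold above which the right-hand side is negative, and conclude by a barrier argument. The paper is terser---it simply asserts the conclusion once it notes $y'<0$ above the threshold---whereas your $\epsilon$-enlargement spells out the first-crossing step rigorously; this extra care is welcome but not strictly necessary, since on any open interval where $y>M$ one already has $y'<0$ strictly, which is enough to force the contradiction directly.
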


\begin{proof}
	Fix $T \in (0,\infty)$.
	Since $y(t)$ is differentiable, we have
	\begin{equation*}
		y'(t) \le -a_1 y(t) + a_2 \sqrt{y(t)} + a_3(t) \le -a_1 \left( \sqrt{y(t)} - \frac{a_2}{2a_1} \right)^2 + \sup_{0 \le s \le T} a_3(s) + \frac{a_2^2}{4a_1}, \quad t \in [0,T].
	\end{equation*}
	Noting that the right hand side above is negative when 
	\begin{equation*}
		\sqrt{y(t)} > \frac{a_2}{2a_1} + \sqrt{\frac{\sup_{0 \le s \le T} a_3(s)}{a_1} + \frac{a_2^2}{4a_1^2}},
	\end{equation*}
	we have the desired result.
\end{proof}

\subsection{Proofs for Section \ref{sec:system-graphon}}
\label{sec:pf-system-graphon}

\begin{proof}[Proof of Proposition \ref{prop:finite-moment-uniform}]	
	Using It\^o's formula, Remark \ref{rmk:linear-growth}(b) and Proposition \ref{prop:well-posedness}(a), we have
	\begin{align*}
		& \Emb |\Xlimit_u(t)|^4 - \Emb |\Xlimit_u(0)|^4 \\
		& = \Emb \int_0^t 4|\Xlimit_u(s)|^2 \Xlimit_u(s) \cdot \left( f(\Xlimit_u(s)) + \int_I \int_{\Rmb^d} b(\Xlimit_u(s),x)G(u,v) \,\lawlimit_{v,s}(dx)\,dv \right) ds + Ct.
	\end{align*}
	Therefore the functions $$\alpha_u(t) := \Emb |\Xlimit_u(t)|^4, \quad \alpha(t) := \int_I \Emb |\Xlimit_v(t)|^4 \, dv = \int_I \alpha_v(t) \, dv$$ are differentiable, and
	\begin{align}
		& \Emb |\Xlimit_u(t)|^4 - \Emb |\Xlimit_u(r)|^4 \label{eq:finite-moment-uniform-1} \\
		& = \Emb \int_r^t 4|\Xlimit_u(s)|^2 \Xlimit_u(s) \cdot \left( f(\Xlimit_u(s)) + \int_I \int_{\Rmb^d} b(\Xlimit_u(s),x)G(u,v) \,\lawlimit_{v,s}(dx)\,dv \right) ds + C(t-r) \notag
	\end{align}		
	for all $t > r \ge 0$.
	From \eqref{eq:c0} we have
	\begin{equation*}
		x \cdot (f(x)-f(0)) \le -c_0 |x|^2
	\end{equation*}
	and hence for each $s \ge 0$,
	\begin{align}
		\Emb \left[ |\Xlimit_u(s)|^2 \Xlimit_u(s) \cdot f(\Xlimit_u(s)) \right]
		& \le \Emb \left[ -c_0 |\Xlimit_u(s)|^4 + |f(0)||\Xlimit_u(s)|^3 \right] \notag \\
		& \le -c_0 \alpha_u(s) + \frac{c_0-2K_b}{4} \alpha_u(s) + C,  \label{eq:finite-moment-uniform-2}
	\end{align}
	where the last line uses Young's inequality and \eqref{eq:kappa}.
	For the rest of integrand in \eqref{eq:finite-moment-uniform-1}, using the Lipschitz property of $b$ we have
	\begin{align*}
		& \Emb \left[ |\Xlimit_u(s)|^2 \Xlimit_u(s) \cdot \int_I \int_{\Rmb^d} b(\Xlimit_u(s),x)G(u,v) \,\lawlimit_{v,s}(dx)\,dv \right] \\
		& \le \int_I \int_{\Rmb^d} \Emb \left[ |\Xlimit_u(s)|^3 \left( |b(0,0)| + K_b|\Xlimit_u(s)| + K_b|x| \right) \right] \lawlimit_{v,s}(dx)\,dv \\
		& \le C \alpha_u^{3/4}(s) + K_b \alpha_u(s) + K_b \alpha_u^{3/4}(s) \int_I \alpha_v^{1/4}(s) \, dv \\
		& \le \frac{c_0-2K_b}{4} \alpha_u(s) + C + K_b \alpha_u(s) + \frac{3}{4}K_b \alpha_u(s) + \frac{1}{4}K_b \alpha(s),
	\end{align*}	
	where the third line uses Jensen's inequality and the last line uses Young's inequality and \eqref{eq:kappa}.
	Combining this with \eqref{eq:finite-moment-uniform-1} and \eqref{eq:finite-moment-uniform-2} gives
	\begin{align}
		& \alpha_u(t) - \alpha_u(r) \le -(2c_0-3K_b) \int_r^t \alpha_u(s) \, ds + K_b \int_r^t \alpha(s) \,ds + C(t-r). \label{eq:finite-moment-uniform-3}
	\end{align}
	Integrating over $u \in I$ gives
	\begin{align*}
		& \alpha(t) - \alpha(r) \le -2(c_0-2K_b) \int_r^t \alpha(s) \, ds + C(t-r).
	\end{align*}	
	Since the function $\alpha(t)$ is non-negative and differentiable, using Lemma \ref{lem:ODE} (with $a_1=2(c_0-2K_b)$, $a_2=0$, $a_3=C$) we have
		$\alpha(t) \le C$.
	Applying this to \eqref{eq:finite-moment-uniform-3} gives
	\begin{equation*}
		\alpha_u(t) - \alpha_u(r) \le -(2c_0-3K_b) \int_r^t \alpha_u(s)\,ds + C(t-r).
	\end{equation*}	
	Since the function $\alpha_u(t)$ is non-negative and differentiable, using Lemma \ref{lem:ODE} again we have
		$\alpha_u(t) \le C$,
	uniformly in $u \in I$.
	This completes the proof.
\end{proof}

\begin{proof}[Proof of Theorem \ref{thm:exponential-ergodicity}]
	(a) From Proposition \ref{prop:finite-moment-uniform} we see that the quantity $\kappa_1 = \sup_{u \in I} \sup_{t \ge 0} \Emb |\Xlimit_u(t)|^2$ is finite.
	Let 
	$$A := \left\{ \eta := (\eta_u : u \in I) \in [\Pmc(\Rmb^d)]^I : \sup_{u \in I} \int_{\Rmb^d} |x|^2\,\eta_u(dx) \le \kappa_1 \right\}.$$  	
	Recall $\kappa$ in \eqref{eq:kappa} and the process $\Ybar_u^\eta$ in \eqref{eq:Ybar_eta}.
	
	We claim that
	\begin{equation}
		\label{eq:exponential-ergodicity-claim}
		\sup_{u \in I} W_2^2((P_t\eta)_u,(P_t\etatil)_u) \le \frac{c_0-K_b}{\kappa} e^{-\kappa t} \sup_{u \in I} W_2^2(\eta_u,\etatil_u)
	\end{equation}
	for any $\eta,\etatil \in A$ and $t \ge 0$.
	To see this, by It\^o's formula, we have
	\begin{align*}
		& e^{\kappa t} \Emb |\Ybar_u^\eta(t)-\Ybar_u^\etatil(t)|^2 - \Emb |\Ybar_u^\eta(0)-\Ybar_u^\etatil(0)|^2 \\
		& = \int_0^t e^{\kappa s} \Emb \left[ 2 \left(\Ybar_u^\eta(s)-\Ybar_u^\etatil(s)\right) \cdot \left( f(\Ybar_u^\eta(s)) - f(\Ybar_u^\etatil(s)) \right. \right. \\
		& \qquad \left. \left. + \int_I \left( \int_{\Rmb^d} b(\Ybar_u^\eta(s),x) \,\lawlimit_{v,s}^\eta(dx) - \int_{\Rmb^d} b(\Ybar_u^\etatil(s),x) \,\lawlimit_{v,s}^\etatil(dx) \right) G(u,v) \,dv \right) \right] ds \\
		& \quad + \int_0^t \kappa e^{\kappa s} \Emb |\Ybar_u^\eta(s)-\Ybar_u^\etatil(s)|^2 \,ds.
	\end{align*}	
	Using \eqref{eq:c0} we have
	\begin{align*}
		\Emb \left[ \left(\Ybar_u^\eta(s)-\Ybar_u^\etatil(s)\right) \cdot \left( f(\Ybar_u^\eta(s)) - f(\Ybar_u^\etatil(s)) \right) \right] \le -c_0 \Emb |\Ybar_u^\eta(s)-\Ybar_u^\etatil(s)|^2.
	\end{align*}
	By adding and subtracting terms, we have
	\begin{align*}
		& \Emb \left[ \left(\Ybar_u^\eta(s)-\Ybar_u^\etatil(s)\right) \cdot \int_I \left( \int_{\Rmb^d} b(\Ybar_u^\eta(s),x) \,\lawlimit_{v,s}^\eta(dx) - \int_{\Rmb^d} b(\Ybar_u^\etatil(s),x) \,\lawlimit_{v,s}^\etatil(dx) \right) G(u,v) \,dv \right] \\
		& \le \Emb \left[ \left(\Ybar_u^\eta(s)-\Ybar_u^\etatil(s)\right) \cdot \int_I \int_{\Rmb^d} \left( b(\Ybar_u^\eta(s),x) - b(\Ybar_u^\etatil(s),x) \right) \lawlimit_{v,s}^\eta(dx) \,G(u,v) \,dv \right] \\
		& \quad + \Emb \left[ \left(\Ybar_u^\eta(s)-\Ybar_u^\etatil(s)\right) \cdot \int_I \int_{\Rmb^d} b(\Ybar_u^\etatil(s),x) \left(\lawlimit_{v,s}^\eta(dx)-\lawlimit_{v,s}^\etatil(dx)\right) G(u,v) \,dv \right] \\
		& \le K_b \Emb |\Ybar_u^\eta(s)-\Ybar_u^\etatil(s)|^2 + K_b \Emb |\Ybar_u^\eta(s)-\Ybar_u^\etatil(s)| \int_I W_2(\lawlimit_{v,s}^\eta, \lawlimit_{v,s}^\etatil) \,dv \\
		& \le \frac{3K_b}{2} \Emb |\Ybar_u^\eta(s)-\Ybar_u^\etatil(s)|^2 + \frac{K_b}{2} \int_I W_2^2(\lawlimit_{v,s}^\eta, \lawlimit_{v,s}^\etatil) \,dv,
	\end{align*}
	where the fourth line uses the Lipschitz property of $b$ and \eqref{eq:Wasserstein-duality} and the last line uses Young's inequality and Jensen's inequality.
	Combining above three displays gives
	\begin{align}
		& e^{\kappa t} \Emb |\Ybar_u^\eta(t)-\Ybar_u^\etatil(t)|^2- \Emb |\Ybar_u^\eta(0)-\Ybar_u^\etatil(0)|^2 \notag \\
		& \quad \le -(2c_0-3K_b-\kappa) \int_0^t e^{\kappa s} \Emb |\Ybar_u^\eta(s)-\Ybar_u^\etatil(s)|^2 \,ds + K_b \int_0^t e^{\kappa s} \int_I W_2^2(\lawlimit_{v,s}^\eta, \lawlimit_{v,s}^\etatil) \,dv\,ds. \label{eq:exponential-ergodicity-1}
	\end{align}
	Since the function $t \mapsto e^{\kappa t} \Emb |\Ybar_u^\eta(t)-\Ybar_u^\etatil(t)|^2$ is non-negative and differentiable, using Lemma \ref{lem:ODE} (with $a_1=2c_0-3K_b-\kappa$, $a_2=0$, $a_3(s)=K_b e^{\kappa s} \int_I W_2^2(\lawlimit_{v,s}^\eta, \lawlimit_{v,s}^\etatil) \,dv$) we have
	\begin{align*}
		e^{\kappa t} \Emb |\Ybar_u^\eta(t)-\Ybar_u^\etatil(t)|^2 & \le \max \left\{ \Emb |\Ybar_u^\eta(0)-\Ybar_u^\etatil(0)|^2, \frac{K_b \sup_{0 \le s \le t} e^{\kappa s} \int_I W_2^2(\lawlimit_{v,s}^\eta, \lawlimit_{v,s}^\etatil) \,dv}{2c_0-3K_b-\kappa} \right\} \\
		& \le \Emb |\Ybar_u^\eta(0)-\Ybar_u^\etatil(0)|^2 + \frac{K_b}{2c_0-3K_b-\kappa} \sup_{0 \le s \le t} e^{\kappa s} \sup_{v \in I} W_2^2(\lawlimit_{v,s}^\eta, \lawlimit_{v,s}^\etatil).
	\end{align*}
	Taking the infimum over the joint distribution of $(\Ybar_u^\eta(0),\Ybar_u^\etatil(0))$ gives
	\begin{align*}
		e^{\kappa t} W_2^2((P_t\eta)_u,(P_t\etatil)_u) & \le e^{\kappa t} \Emb |\Ybar_u^\eta(t)-\Ybar_u^\etatil(t)|^2 \\
		& \le W_2^2(\eta_u,\etatil_u) + \frac{K_b}{2c_0-3K_b-\kappa} \sup_{0 \le s \le t} e^{\kappa s} \sup_{v \in I} W_2^2(\lawlimit_{v,s}^\eta, \lawlimit_{v,s}^\etatil).
	\end{align*}
	Taking the supremum over $u \in I$ and the time interval $[0,t]$ gives
	\begin{equation*}
		\sup_{0 \le s \le t} e^{\kappa s} \sup_{u \in I} W_2^2((P_s\eta)_u,(P_s\etatil)_u) \le \sup_{u \in I} W_2^2(\eta_u,\etatil_u) + \frac{K_b}{2c_0-3K_b-\kappa} \sup_{0 \le s \le t} e^{\kappa s} \sup_{u \in I} W_2^2(\lawlimit_{u,s}^\eta, \lawlimit_{u,s}^\etatil).
	\end{equation*}
	Since $\kappa = c_0-2K_b>0$, by rearranging terms we have
	\begin{equation*}
		\sup_{0 \le s \le t} e^{\kappa s} \sup_{u \in I} W_2^2((P_s\eta)_u,(P_s\etatil)_u) \le \frac{c_0-K_b}{c_0-2K_b} \sup_{u \in I} W_2^2(\eta_u,\etatil_u).
	\end{equation*}
	This gives the claim \eqref{eq:exponential-ergodicity-claim}.
	
	Note that $\lawlimit(t) := (\lawlimit_{u,t} : u \in I) \in A$ and $\lawlimit(t) = P_t\lawlimit(0)$ for each $t \ge 0$ by Propositions \ref{prop:well-posedness}(a) and \ref{prop:finite-moment-uniform}.
	It then follows from \eqref{eq:exponential-ergodicity-claim} that
	\begin{align}
		W_2^2(\lawlimit_{u,t+s},\lawlimit_{u,t}) & = W_2^2((P_t\lawlimit(s))_u,(P_t\lawlimit(0))_u) \notag \\
		& \le \frac{c_0-K_b}{\kappa}e^{-\kappa t} W_2^2(\lawlimit_{u,s},\lawlimit_{u,0}) \notag \\
		& \le 4\kappa_1 \frac{c_0-K_b}{\kappa}e^{-\kappa t}. \label{eq:exponential-ergodicity-2}
	\end{align} 
	This means that $\lawlimit_{u,t}$ is a $W_2$-Cauchy family when $t \to \infty$.
	So there exists a probability measure $\lawlimit_{u,\infty} \in \Pmc(\Rd)$ such that
	\begin{equation}
		\label{eq:exponential-ergodicity-3}
		\lim_{t \to \infty} W_2(\lawlimit_{u,t},\lawlimit_{u,\infty}) = 0.
	\end{equation}
	In fact, taking $s \to \infty$ in \eqref{eq:exponential-ergodicity-2} gives $W_2(\lawlimit_{u,t},\lawlimit_{u,\infty}) \le \sqrt{4\kappa_1\frac{c_0-K_b}{\kappa}} e^{-\kappa t/2}$, uniformly in $u \in I$.
	This gives \eqref{eq:exponential-ergodicity}.
	Since $W_2^2(\cdot,\cdot)$ is convex, \eqref{eq:exponential-ergodicity-average} follows from \eqref{eq:exponential-ergodicity} and \eqref{eq:lawlimitmean}.	
	
	(b) Next we argue that $\lawlimit(\infty)$ is invariant with respect to $P_t$.
	Noting that $\lawlimit(\infty) \in A$,
	we can apply \eqref{eq:exponential-ergodicity-claim} and use \eqref{eq:exponential-ergodicity-3} to get
	\begin{equation*}
		\limsup_{s \to \infty} W_2((P_t\lawlimit(\infty))_u,\lawlimit_{u,t+s}) \le \limsup_{s \to \infty} \sqrt{\frac{c_0-K_b}{\kappa}} e^{-\kappa t/2} W_2(\lawlimit_{u,\infty},\lawlimit_{u,s}) = 0
	\end{equation*}
	and
	\begin{equation*}
		\limsup_{s \to \infty} W_2(\lawlimit_{u,t+s},\lawlimit_{u,s}) \le \limsup_{s \to \infty} \sqrt{\frac{c_0-K_b}{\kappa}} e^{-\kappa s/2} W_2(\lawlimit_{u,t},\lawlimit_{u,0}) = 0.
	\end{equation*}
	Combining these two with \eqref{eq:exponential-ergodicity-3} gives
	\begin{align*}
		W_2((P_t\lawlimit(\infty))_u,\lawlimit_{u,\infty})
		& \le \limsup_{s \to \infty} W_2((P_t\lawlimit(\infty))_u,\lawlimit_{u,t+s}) \\
		& \quad + \limsup_{s \to \infty} W_2(\lawlimit_{u,t+s},\lawlimit_{u,s}) + \limsup_{s \to \infty} W_2(\lawlimit_{u,s},\lawlimit_{u,\infty}) \\
		& = 0.
	\end{align*} 
	This gives part (b).
	
	(c) 
	Fix $u_1,u_2 \in I$. 
	Consider the following diffusions:
	\begin{align*}
		\Xtil_{u_1}(t) & = \Xtil_{u_1}(0) + \int_0^t \left( f(\Xtil_{u_1}(s)) + \int_I \int_{\Rmb^d} b(\Xtil_{u_1}(s),x)G(u_1,v) \,\lawlimit_{v,s}(dx)\,dv\right)ds + \sigma B(t), \\
		\Xtil_{u_2}(t) & = \Xtil_{u_2}(0) + \int_0^t \left( f(\Xtil_{u_2}(s)) + \int_I \int_{\Rmb^d} b(\Xtil_{u_2}(s),x)G(u_2,v) \,\lawlimit_{v,s}(dx)\,dv\right)ds + \sigma B(t).
	\end{align*}
	Here $B$ is a $d$-dimensional Brownian motion independent of $\{\Xtil_{u_1}(0),\Xtil_{u_2}(0)\}$, $\Lmc(\Xtil_{u_1}(0))=\lawlimit_{u_1,0}$, $\Lmc(\Xtil_{u_2}(0))=\lawlimit_{u_2,0}$, but $\Xtil_{u_1}(0)$ and $\Xtil_{u_2}(0)$ may not be independent.
	From the uniqueness property in Proposition \ref{prop:well-posedness}(a) we have $\Lmc(\Xtil_{u_1})=\lawlimit_{u_1}$ and $\Lmc(\Xtil_{u_2})=\lawlimit_{u_2}$.
	Using It\^o's formula we have
	\begin{align*}
		& \Emb |\Xtil_{u_1}(t)-\Xtil_{u_2}(t)|^2 - \Emb |\Xtil_{u_1}(r)-\Xtil_{u_2}(r)|^2 \\
		& = \Emb \int_r^t 2 (\Xtil_{u_1}(s)-\Xtil_{u_2}(s)) \cdot \left( f(\Xtil_{u_1}(s)) - f(\Xtil_{u_2}(s)) \right. \\
		& \qquad \left. + \int_I \int_{\Rmb^d} \left( b(\Xtil_{u_1}(s),x)G(u_1,v) - b(\Xtil_{u_2}(s),x)G(u_2,v) \right) \lawlimit_{v,s}(dx)\,dv \right) ds.
	\end{align*}	
	For each $s \ge 0$, from \eqref{eq:c0} we have
	\begin{align*}
		& \Emb \left[ (\Xtil_{u_1}(s)-\Xtil_{u_2}(s)) \cdot \left(f(\Xtil_{u_1}(s)) - f(\Xtil_{u_2}(s)) \right) \right] \le -c_0\Emb |\Xtil_{u_1}(s)-\Xtil_{u_2}(s)|^2.
	\end{align*}
	By adding and subtracting terms we get
	\begin{align*}
		& \Emb \left| (\Xtil_{u_1}(s)-\Xtil_{u_2}(s)) \cdot \int_I \int_{\Rmb^d} \left( b(\Xtil_{u_1}(s),x)G(u_1,v) - b(\Xtil_{u_2}(s),x)G(u_2,v) \right) \lawlimit_{v,s}(dx)\,dv \right| \\
		& \le \Emb \left| (\Xtil_{u_1}(s)-\Xtil_{u_2}(s)) \cdot \int_I \int_{\Rmb^d} \left( b(\Xtil_{u_1}(s),x) - b(\Xtil_{u_2}(s),x) \right) G(u_1,v)\, \lawlimit_{v,s}(dx)\,dv \right| \\
		& \quad + \Emb \left| (\Xtil_{u_1}(s)-\Xtil_{u_2}(s)) \cdot \int_I \int_{\Rmb^d} b(\Xtil_{u_2}(s),x) \left(G(u_1,v) - G(u_2,v) \right) \lawlimit_{v,s}(dx)\,dv \right|.
	\end{align*}
	For the first term on the right hand side, it follows from the Lipschitz property of $b$ that
	\begin{align*}
		& \Emb \left| (\Xtil_{u_1}(s)-\Xtil_{u_2}(s)) \cdot \int_I \int_{\Rmb^d} \left( b(\Xtil_{u_1}(s),x) - b(\Xtil_{u_2}(s),x) \right) G(u_1,v)\, \lawlimit_{v,s}(dx)\,dv \right| \\
		& \le K_b\Emb |\Xtil_{u_1}(s)-\Xtil_{u_2}(s)|^2.
	\end{align*}
	For the other term, using the Cauchy-Schwarz inequality, Young's inequality, the Lipschitz property of $b$ and Proposition \ref{prop:finite-moment-uniform} we have
	\begin{align*}
		& \Emb \left| (\Xtil_{u_1}(s)-\Xtil_{u_2}(s)) \cdot \int_I \int_{\Rmb^d} b(\Xtil_{u_2}(s),x) \left(G(u_1,v) - G(u_2,v) \right) \lawlimit_{v,s}(dx)\,dv \right| \\
		& \le K_b \Emb |\Xtil_{u_1}(s)-\Xtil_{u_2}(s)|^2 + \frac{1}{4K_b} \Emb \left( \int_I \int_{\Rmb^d} |b(\Xtil_{u_2}(s),x)| |G(u_1,v) - G(u_2,v)| \, \lawlimit_{v,s}(dx)\,dv \right)^2 \\
		& \le K_b \Emb |\Xtil_{u_1}(s)-\Xtil_{u_2}(s)|^2 + C \Emb \left( \int_I \left( 1 + |\Xtil_{u_2}(s)| \right) |G(u_1,v) - G(u_2,v)| \,dv \right)^2 \\
		& \le K_b\Emb |\Xtil_{u_1}(s)-\Xtil_{u_2}(s)|^2 + C \left( \int_I |G(u_1,v)-G(u_2,v)| \,dv \right)^2.
	\end{align*}	
	Combining above five displays gives
	\begin{align*}
		& \Emb |\Xtil_{u_1}(t)-\Xtil_{u_2}(t)|^2 - \Emb |\Xtil_{u_1}(r)-\Xtil_{u_2}(r)|^2 \\
		& \le -2(c_0-2K_b) \int_r^t \Emb |\Xtil_{u_1}(s)-\Xtil_{u_2}(s)|^2 \,ds + C(t-r) \left( \int_I |G(u_1,v)-G(u_2,v)| \,dv \right)^2.
	\end{align*}
	Since the function $t \mapsto \Emb |\Xtil_{u_1}(t)-\Xtil_{u_2}(t)|^2$ is non-negative and differentiable, using Lemma \ref{lem:ODE} (with $a_1=2(c_0-2K_b)$, $a_2=0$, $a_3=C\left( \int_I |G(u_1,v)-G(u_2,v)| \,dv \right)^2$) we have
	$$\Emb |\Xtil_{u_1}(t)-\Xtil_{u_2}(t)|^2 \le \max \left\{ \Emb |\Xtil_{u_1}(0)-\Xtil_{u_2}(0)|^2, C \left( \int_I |G(u_1,v)-G(u_2,v)| \,dv \right)^2 \right\}.$$
	Taking the infimum over the joint distribution of $\Xtil_{u_1}(0)$ and $\Xtil_{u_2}(0)$ gives part (c) and completes the proof.
\end{proof}

\subsection{Proofs for Section \ref{sec:system-n}}
\label{sec:pf-system-n}

\begin{proof}[Proof of Proposition \ref{prop:finite-moment-uniform-n}]
	Fix $n \in \Nmb$ and $z=(z_{ij}=z_{ji})_{i,j=1}^n \in [0,1]^{n^2}$.
	Using the Lipschitz property of $f,b$ and a standard argument one has
	\begin{equation*}
		\max_{i=1,\dotsc,n} \sup_{0 \le t \le T} \Emb^{n,z} |X_i^n(t)|^2 < \infty \;\: a.s., \quad \forall \, T \in (0,\infty).
	\end{equation*}
	Using this and It\^o's formula, we have
	\begin{align*}
		& \Emb^{n,z} |X_i^n(t)|^2 - \Emb^{n,z} |X_i^n(r)|^2 \\
		& = \Emb^{n,z} \int_r^t 2X_i^n(s) \cdot \left( f(X_i^n(s)) + \frac{1}{n} \sum_{j=1}^n z_{ij}b(X_i^n(s),X_j^n(s)) \right) ds + C(t-r),
	\end{align*}
	and hence the functions
	\begin{align*}
		\alpha_i^{n,z}(t) := \Emb^{n,z} |X_i^n(t)|^2, \quad \alpha^{n,z}(t) := \frac{1}{n} \sum_{i=1}^n \Emb^{n,z} |X_i^n(t)|^2
	\end{align*}	
	are differentiable.
	For each $s \ge 0$, using \eqref{eq:c0} and the Lipschitz property of $b$ we have
	\begin{align*}
		& \Emb^{n,z} \left[ X_i^n(s) \cdot \left( f(X_i^n(s)) + \frac{1}{n} \sum_{j=1}^n z_{ij}b(X_i^n(s),X_j^n(s)) \right) \right] \\
		& \le \Emb^{n,z} \left[ -c_0 |X_i^n(s)|^2 + C|X_i^n(s)| + \frac{1}{n} \sum_{j=1}^n |X_i^n(s)| \left( C + K_b|X_i^n(s)| + K_b|X_j^n(s)| \right)  \right] \\
		& \le -c_0 \alpha_i^{n,z}(s) + \frac{c_0-2K_b}{2} \alpha_i^{n,z}(s) + C + K_b \alpha_i^{n,z}(s) + \frac{K_b}{2} \alpha_i^{n,z}(s) + \frac{K_b}{2} \alpha^{n,z}(s),
	\end{align*}
	where the last line uses Young's inequality and \eqref{eq:kappa}.	
	Therefore
	\begin{align}
		& \alpha_i^{n,z}(t) - \alpha_i^{n,z}(r) \le -\left(c_0-K_b\right) \int_r^t \alpha_i^{n,z}(s) \, ds + K_b \int_r^t \alpha^{n,z}(s) \,ds + C(t-r). \label{eq:finite-moment-uniform-n-1}
	\end{align}
	Taking the average over $i=1,\dotsc,n$ gives
	\begin{align*}
		\alpha^{n,z}(t) - \alpha^{n,z}(r) & \le -(c_0-2K_b) \int_r^t \alpha^{n,z}(s) \, ds + C(t-r). 
	\end{align*}	
	Since the function $\alpha^{n,z}(t)$ is non-negative and differentiable, using Lemma \ref{lem:ODE} (with $a_1=c_0-2K_b$, $a_2=0$, $a_3=C$) we have	
		$\alpha^{n,z}(t) \le C$.
	From this and \eqref{eq:finite-moment-uniform-n-1} we further have
	\begin{equation*}
		\alpha_i^{n,z}(t) - \alpha_i^{n,z}(r) \le -(c_0-K_b) \int_r^t \alpha_i^{n,z}(s)\,ds + C(t-r).
	\end{equation*}	
	Since the function $\alpha_i^{n,z}(t)$ is non-negative and differentiable, using Lemma \ref{lem:ODE} again we have	
		$\alpha_i^{n,z}(t) \le C$,
	uniformly in $n \in \Nmb$, $i=1,\dotsc,n$ and $t \ge 0$.
	This completes the proof.	
\end{proof}

\begin{proof}[Proof of Theorem \ref{thm:exponential-ergodicity-n}]
	(a) Recall Proposition \ref{prop:finite-moment-uniform-n} and $\kappa_2$ therein.
	Fix $n \in \Nmb$ and $z=(z_{ij}=z_{ji})_{i,j=1}^n \in [0,1]^{n^2}$.
	Let
	$$A_n :=\left\{ \eta \in \Pmc((\Rmb^d)^n) : \max_{i=1,\dotsc,n} \int_{(\Rmb^d)^n} |x_i|^2 \,\eta(dx) \le \kappa_2 \right\}.$$
	Recall the process $Y^{n,z,\eta}$ in \eqref{eq:Y_eta-n}.
	 
	We claim that
	\begin{equation}
		\label{eq:exponential-ergodicity-n-claim}
		W_2^2(P_t^{n,z}\eta,P_t^{n,z}\etatil) \le W_2^2(\eta,\etatil) e^{-2\kappa t}
	\end{equation}
	for any $\eta,\etatil \in A_n$ and $t \ge 0$.
	To see this, by It\^o's formula, we have
	\begin{align*}
		& e^{2\kappa t} \sum_{i=1}^n \Emb^{n,z} |Y_i^{n,z,\eta}(t)-Y_i^{n,z,\etatil}(t)|^2
		- \sum_{i=1}^n \Emb^{n,z} |Y_i^{n,z,\eta}(0)-Y_i^{n,z,\etatil}(0)|^2 \\
		& = \int_0^t e^{2\kappa s} \sum_{i=1}^n \Emb^{n,z} \left[ 2 \left( Y_i^{n,z,\eta}(s)-Y_i^{n,z,\etatil}(s) \right) \cdot \left( f(Y_i^{n,z,\eta}(s)) - f(Y_i^{n,z,\etatil}(s)) \right. \right. \\
		& \qquad \left. \left. + \frac{1}{n} \sum_{j=1}^n z_{ij} \left( b(Y_i^{n,z,\eta}(s),Y_j^{n,z,\eta}(s)) - b(Y_i^{n,z,\etatil}(s),Y_j^{n,z,\etatil}(s)) \right) \right) \right] ds \\
		& \quad + \int_0^t 2\kappa e^{2\kappa s} \sum_{i=1}^n \Emb^{n,z} |Y_i^{n,z,\eta}(s)-Y_i^{n,z,\etatil}(s)|^2 \, ds.
	\end{align*}	
	Using \eqref{eq:c0} we have
	\begin{align*}
		& \sum_{i=1}^n \Emb^{n,z} \left[ \left( Y_i^{n,z,\eta}(s)-Y_i^{n,z,\etatil}(s) \right) \cdot \left( f(Y_i^{n,z,\eta}(s)) - f(Y_i^{n,z,\etatil}(s)) \right) \right] \\
		& \le -c_0 \sum_{i=1}^n \Emb^{n,z} |Y_i^{n,z,\eta}(s)-Y_i^{n,z,\etatil}(s)|^2.
	\end{align*}
	Using the Cauchy-Schwarz inequality, the Lipschitz property of $b$ and Young's inequality we have
	\begin{align*}
		& \sum_{i=1}^n \Emb^{n,z} \left[ \left( Y_i^{n,z,\eta}(s)-Y_i^{n,z,\etatil}(s) \right) \cdot \frac{1}{n} \sum_{j=1}^n z_{ij} \left( b(Y_i^{n,z,\eta}(s),Y_j^{n,z,\eta}(s)) - b(Y_i^{n,z,\etatil}(s),Y_j^{n,z,\etatil}(s)) \right) \right] \\
		& \le \sum_{i=1}^n \Emb^{n,z} \left[ \left| Y_i^{n,z,\eta}(s)-Y_i^{n,z,\etatil}(s) \right| \cdot \frac{K_b}{n} \sum_{j=1}^n \left( \left|Y_i^{n,z,\eta}(s)-Y_i^{n,z,\etatil}(s)\right| + \left|Y_j^{n,z,\eta}(s)-Y_j^{n,z,\etatil}(s)\right| \right) \right] \\
		& \le 2K_b \sum_{i=1}^n \Emb^{n,z} |Y_i^{n,z,\eta}(s)-Y_i^{n,z,\etatil}(s)|^2.
	\end{align*}
	Combining above three estimates with the definition of $\kappa$ in \eqref{eq:kappa} gives
	$$e^{2\kappa t} \sum_{i=1}^n \Emb^{n,z} |Y_i^{n,z,\eta}(t)-Y_i^{n,z,\etatil}(t)|^2 \le \sum_{i=1}^n \Emb^{n,z} |Y_i^{n,z,\eta}(0)-Y_i^{n,z,\etatil}(0)|^2.$$
	Therefore
	$$W_2^2(P_t^{n,z}\eta,P_t^{n,z}\etatil) \le \sum_{i=1}^n \Emb^{n,z} |Y_i^{n,z,\eta}(t)-Y_i^{n,z,\etatil}(t)|^2 \le e^{-2\kappa t} \sum_{i=1}^n \Emb^{n,z} |Y_i^{n,z,\eta}(0)-Y_i^{n,z,\etatil}(0)|^2.$$
	Taking the infimum over the joint distribution of $(Y^{n,z,\eta}(0),Y^{n,z,\etatil}(0))$ gives the claim \eqref{eq:exponential-ergodicity-n-claim}.
	
	Note that $\jointpre^{n,z}(t) = P_t \jointpre^{n,z}(0) \in A_n$ for each $t \ge 0$ by Proposition \ref{prop:finite-moment-uniform-n}.
	Therefore 
	$$W_2^2(\jointpre^{n,z}(t),\jointpre^{n,z}(0)) \le \sum_{i=1}^n \Emb^{n,z}|X_i^n(t)-X_i^n(0)|^2 \le 2 \sum_{i=1}^n \Emb^{n,z}[|X_i^n(t)|^2 + |X_i^n(0)|^2] \le 4n\kappa_2.$$
	It then follows from \eqref{eq:exponential-ergodicity-n-claim} that
	\begin{align}
		W_2^2(\jointpre^{n,z}(t+s),\jointpre^{n,z}(t)) & = W_2^2(P_t^{n,z}\jointpre^{n,z}(s),P_t^{n,z}\jointpre^{n,z}(0)) \notag \\
		& \le W_2^2(\jointpre^{n,z}(s),\jointpre^{n,z}(0)) e^{-2\kappa t} \notag \\
		& \le 4n\kappa_2 e^{-2\kappa t}. \label{eq:exponential-ergodicity-n-1}
	\end{align} 
	This means that $\jointpre^{n,z}(t)$ is a $W_2$-Cauchy family when $t \to \infty$.
	So there exists a probability measure $\jointpre^{n,z}(\infty) \in \Pmc((\Rd)^n)$ such that
	\begin{equation}
		\label{eq:exponential-ergodicity-n-2}
		\lim_{t \to \infty} W_2(\jointpre^{n,z}(t),\jointpre^{n,z}(\infty)) = 0.
	\end{equation}
	In fact, taking $s \to \infty$ in \eqref{eq:exponential-ergodicity-n-1} gives 
	$$\frac{1}{\sqrt{n}} W_2(\jointpre^{n,z}(t),\jointpre^{n,z}(\infty)) \le \sqrt{4\kappa_2} e^{-\kappa t},$$
	which gives \eqref{eq:exponential-ergodicity-n-z}.
	From this and \eqref{eq:theta-infinity} we have
	$$\frac{1}{\sqrt{n}} W_2(\jointpre^n(t),\jointpre^n(\infty)) \le \sup_{z \in [0,1]^{n^2}} \frac{1}{n} W_2(\jointpre^{n,z}(t),\jointpre^{n,z}(\infty)) \le \sqrt{4\kappa_2} e^{-\kappa t}.$$
	Therefore \eqref{eq:exponential-ergodicity-n} hold.
	
	(b) Finally we argue that $\jointpre^{n,z}(\infty)$ is invariant with respect to $P_t^{n,z}$.
	Noting that $\jointpre^{n,z}(\infty) \in A_n$, we can apply \eqref{eq:exponential-ergodicity-n-claim} and use \eqref{eq:exponential-ergodicity-n-2} to get
	\begin{equation*}
		\limsup_{s \to \infty} W_2(P_t^{n,z}\jointpre^{n,z}(\infty),\jointpre^{n,z}(t+s)) \le \limsup_{s \to \infty} e^{-\kappa t} W_2(\jointpre^{n,z}(\infty),\jointpre^{n,z}(s)) = 0
	\end{equation*}
	and
	\begin{equation*}
		\limsup_{s \to \infty} W_2(\jointpre^{n,z}(t+s),\jointpre^{n,z}(s)) \le \limsup_{s \to \infty} e^{-\kappa s} W_2(\jointpre^{n,z}(t),\jointpre^{n,z}(0)) = 0.
	\end{equation*}	
	Combining these two with \eqref{eq:exponential-ergodicity-n-2} gives
	\begin{align*}
		W_2(P_t^{n,z}\jointpre^{n,z}(\infty),\jointpre^{n,z}(\infty))
		& \le \limsup_{s \to \infty} W_2(P_t^{n,z}\jointpre^{n,z}(\infty),\jointpre^{n,z}(t+s)) \\
		& \quad + \limsup_{s \to \infty} W_2(\jointpre^{n,z}(t+s),\jointpre^{n,z}(s)) + \limsup_{s \to \infty} W_2(\jointpre^{n,z}(s),\jointpre^{n,z}(\infty)) \\
		& = 0.
	\end{align*}
	This gives part (b) and completes the proof.
\end{proof}

\subsection{Proofs for Section \ref{sec:LLN}}
\label{sec:pf-LLN}

We need the following lemma to prove Theorem \ref{thm:moment-convergence-uniform}.

\begin{Lemma}
	\label{lem:truncation}
	Suppose Conditions \ref{cond:G-continuous} and \ref{cond:G_n-step-graphon} hold.
	For $s \ge 0$, write
	\begin{align}
		\Rmc^n_s & := \frac{1}{n} \sum_{i=1}^n \Emb \left| \frac{1}{n} \sum_{j=1}^n \int_{\Rmb^d} b(\Xlimit_{\frac{i}{n}}(s),x)G_n(\frac{i}{n},\frac{j}{n})\,\lawlimit_{\frac{j}{n},s}(dx) \right. \notag \\
		& \qquad \left. - \int_I \int_{\Rmb^d} b(\Xlimit_{\frac{i}{n}}(s),x)G(\frac{i}{n},v)\,\lawlimit_{v,s}(dx)\,dv \right|^2. \label{eq:R-n-s}
	\end{align}
	Then 
	\begin{equation*}
		\lim_{n \to \infty} \sup_{s \ge 0} \Rmc^n_s = 0.
	\end{equation*}
\end{Lemma}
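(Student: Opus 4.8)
The plan is to reduce to a bounded drift by truncation, and then to split the bracketed quantity in \eqref{eq:R-n-s} into a Riemann-sum error controlled by the (uniform in $s$) continuity of $v\mapsto\lawlimit_{v,s}$, plus a ``graphon discrepancy'' term controlled by the cut-metric convergence $G_n\to G$. Write $\psi_{i,s}(u):=\int_{\Rmb^d} b(\Xlimit_{\frac{i}{n}}(s),x)\,\lawlimit_{u,s}(dx)$, so the bracket in \eqref{eq:R-n-s} is $\frac1n\sum_{j=1}^n G_n(\frac{i}{n},\frac{j}{n})\psi_{i,s}(\frac{j}{n})-\int_I G(\frac{i}{n},v)\psi_{i,s}(v)\,dv$; by the linear growth of $b$ (Remark \ref{rmk:linear-growth}(b)) and Proposition \ref{prop:finite-moment-uniform} one has $|\psi_{i,s}(u)|\le C(1+|\Xlimit_{\frac{i}{n}}(s)|)$ and $\sup_{u,s}\int|x|^2\lawlimit_{u,s}(dx)=\kappa_1<\infty$. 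First I would replace $b$ by the truncation $b_R(x,y):=b(\rho_R(x),\rho_R(y))$, $\rho_R$ the radial projection onto the ball of radius $R$; then $|b-b_R|(x,y)\le\frac{K_b}{R}(|x|^2+|y|^2)$, so the corresponding $\psi^R_{i,s}$ differs from $\psi_{i,s}$ by at most $\frac{K_b}{R}(|\Xlimit_{\frac{i}{n}}(s)|^2+\kappa_1)$, and using the \emph{fourth}-moment bound of Proposition \ref{prop:finite-moment-uniform} one gets $\Rmc^n_s\le 2\Rmc^{n,R}_s+C/R^2$ uniformly in $n,s$, where $\Rmc^{n,R}_s$ is \eqref{eq:R-n-s} with $b$ replaced by $b_R$. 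Thus it suffices to prove $\lim_{n\to\infty}\sup_{s\ge0}\Rmc^{n,R}_s=0$ for each fixed $R$; note $|\psi^R_{i,s}(u)|\le C_R$ uniformly in $i,s,u,\omega$, and since $x\mapsto b_R(y,x)$ is still $K_b$-Lipschitz, $|\psi^R_{i,s}(u)-\psi^R_{i,s}(u')|\le K_b W_2(\lawlimit_{u,s},\lawlimit_{u',s})$ for all $i,s,\omega$.

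Since $G_n$ is a step graphon, $\frac1n\sum_j G_n(\frac{i}{n},\frac{j}{n})\psi^R_{i,s}(\frac{j}{n})=\int_I G_n(\frac{i}{n},v)\psi^R_{i,s}(\tfrac{\lceil nv\rceil}{n})\,dv$, so the bracket (for $b_R$) decomposes as
\begin{align*}
  \underbrace{\int_I G_n(\tfrac{i}{n},v)\big[\psi^R_{i,s}(\tfrac{\lceil nv\rceil}{n})-\psi^R_{i,s}(v)\big]\,dv}_{=:\,(\mathrm{I})_i}
  \;+\;\underbrace{\int_I\big[G_n(\tfrac{i}{n},v)-G(\tfrac{i}{n},v)\big]\psi^R_{i,s}(v)\,dv}_{=:\,(\mathrm{II})_i}.
\end{align*}
For $(\mathrm{I})_i$: as $0\le G_n\le1$ and by the Lipschitz bound above, $|(\mathrm{I})_i|\le K_b\int_I W_2(\lawlimit_{\lceil nv\rceil/n,\,s},\lawlimit_{v,s})\,dv$, which is deterministic and independent of $i$. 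By Corollary \ref{cor:continuity-Lipschitz-special}(a) (a consequence of Condition \ref{cond:G-continuous}), for a.e.\ $v$ the point $\lceil nv\rceil/n$ eventually lies in the same interval $I_k$ as $v$, hence $\sup_{s\ge0}W_2(\lawlimit_{\lceil nv\rceil/n,\,s},\lawlimit_{v,s})\to0$; since this is bounded by $2\sqrt{\kappa_1}$, dominated convergence gives $\sup_{s\ge0}\max_i|(\mathrm{I})_i|\to0$ (measurability of $v\mapsto\sup_{s\ge0}W_2(\cdots)$ comes from restricting the supremum to rational $s$ and using $W_2$-continuity of $s\mapsto\lawlimit_{v,s}$). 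Therefore $\sup_{s\ge0}\frac1n\sum_i\Emb|(\mathrm{I})_i|^2\to0$.

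The main obstacle is $(\mathrm{II})_i$, where $\|G_n-G\|_\square\to0$ must be used. The difficulty is that the operator norm $\|G_n-G\|_{\infty\to1}$ — which tends to $0$ by Remark \ref{rmk:graphon-convergence} — controls $\int_I|\int_I(G_n-G)(u,v)g(v)\,dv|\,du$ only for a \emph{single, deterministic, $u$-independent} test function $g$, whereas the test function $\psi^R_{i,s}(\cdot)$ here varies with the row index $i$ (and with $s$ and the randomness $\omega$). The resolution is that continuity of $v\mapsto\lawlimit_{v,s}$ forces all these test functions into one compact family: by the Lipschitz bound of the first paragraph together with Corollary \ref{cor:continuity-Lipschitz-special}(a), the family $\{\psi^R_{i,s}(\cdot):n,i,s\ge0,\omega\}$ is uniformly bounded by $C_R$ and equicontinuous on each $I_k$, with modulus independent of $s,\omega$. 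Concretely, I would first replace the row average $\frac1n\sum_{i=1}^n[\,\cdot\,]|_{u=i/n}$ by the integral $\int_I[\,\cdot\,]|_{u=\lceil nu\rceil/n}\,du$, absorbing the error coming from $G(u,v)-G(\lceil nu\rceil/n,v)$, which vanishes in $L^1(I\times I)$ by the a.e.\ continuity of $G$ (Condition \ref{cond:G-continuous}(b)) and dominated convergence; this leaves $\int_I\sup_{g\in\Kmc_R}\big|\int_I(G_n-G)(u,v)g(v)\,dv\big|\,du$, where $\Kmc_R$ denotes the above (closure of the) family. Fixing $\delta>0$, split the $v$-integral over the $\delta$-interiors of $I_1,\dots,I_N$ and a remainder of measure $\le 2N\delta$ (on which $|G_n-G|\le1$ gives the crude bound $\le C_R\delta$ after integrating in $u$); on the $\delta$-interiors, $\{g|_{\cdot}:g\in\Kmc_R\}$ is precompact in $C$ by Arzelà–Ascoli, so a finite $\epsilon$-net $h_1,\dots,h_L$ (extended to $I$ with $\|h_\ell\|_\infty\le C_R$) reduces the estimate to $\sum_{\ell=1}^L\|(G_n-G)h_\ell\|_1\le L\,C_R\,\|G_n-G\|_{\infty\to1}\to0$; letting $n\to\infty$ then $\delta,\epsilon\to0$ yields $\sup_{s\ge0}\frac1n\sum_i\Emb|(\mathrm{II})_i|^2\to0$. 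Combining with the previous paragraph gives $\limsup_{n\to\infty}\sup_{s\ge0}\Rmc^n_s\le C/R^2$ for every $R$, and $R\to\infty$ concludes.
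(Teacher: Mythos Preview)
Your proof is correct, but the route differs from the paper's in one key place: the handling of the graphon discrepancy term $(\mathrm{II})_i$, where the test function $\psi^R_{i,s}(\cdot)$ depends on the row index (and on $s,\omega$). You resolve this by a compactness argument: the uniform-in-$s$ continuity of $v\mapsto\lawlimit_{v,s}$ from Corollary \ref{cor:continuity-Lipschitz-special}(a) gives a common modulus, so Arzel\`a--Ascoli on the $\delta$-interiors of the $I_k$ yields a finite $\epsilon$-net of test functions, and the operator norm $\|G_n-G\|_{\infty\to1}$ is applied to each net member. The paper instead invokes an $L^\infty$ multivariate polynomial approximation result (Schultz) to write the truncated drift as a finite sum $\btil_m(x,y)=\sum_{k=1}^m a_k(x)c_k(y)\one_{\{|x|\le M,|y|\le M\}}$; this \emph{separates variables} so that the $v$-integrand becomes $\int c_k(x)\one_{\{|x|\le M\}}\lawlimit_{\lceil nv\rceil/n,s}(dx)$, which is literally independent of $u$, and $\|G_n-G\|_{\infty\to1}$ applies directly without any compactness step.

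What each buys: your approach is self-contained (no external approximation theorem) and exploits exactly the continuity hypothesis you already need for $(\mathrm{I})_i$; the price is the Arzel\`a--Ascoli/$\epsilon$-net bookkeeping and the $\delta$-interior splitting. The paper's separation-of-variables trick is cleaner once the polynomial approximation is granted, and it sidesteps any uniformity-in-$(i,s,\omega)$ discussion, but at the cost of an outside reference. The truncations are also slightly different (hard cutoff $b\one_{\{|x|\le M,|y|\le M\}}$ plus fourth-moment control versus your Lipschitz-preserving radial projection $b(\rho_R(\cdot),\rho_R(\cdot))$), but this is cosmetic.
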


\begin{proof}
	Fix $M > 1$ and write
	\begin{equation}
		\label{eq:b-M}
		b_M(x,y) := b(x,y) \one_{\{|x| \le M, |y| \le M\}}.
	\end{equation}
	It then follows from \cite[Corollary 2 of Theorem 3.1]{Schultz1969multivariate} that there exist some $m \in \Nmb$ and polynomials
	\begin{equation}
		\label{eq:btil-m}
		\btil_m(x,y) := \sum_{k=1}^m a_k(x)c_k(y) \one_{\{|x| \le M, |y| \le M\}},
	\end{equation}
	where $a_k$ and $c_k$ are polynomials for each $k=1,\dotsc,m$, such that
	\begin{equation}
		\label{eq:b-M-m}
		|b_M(x,y)-\btil_m(x,y)| \le 1/M.
	\end{equation}	
	By adding and subtracting terms, we have
	\begin{align}
		\Rmc^n_s 
		& \le \frac{5}{n} \sum_{i=1}^n \Emb \left| \frac{1}{n} \sum_{j=1}^n \int_{\Rmb^d} \left( b(\Xlimit_{\frac{i}{n}}(s),x) - b_M(\Xlimit_{\frac{i}{n}}(s),x) \right) G_n(\frac{i}{n},\frac{j}{n})\,\lawlimit_{\frac{j}{n},s}(dx) \right|^2 \notag \\
		& \quad + \frac{5}{n} \sum_{i=1}^n \Emb \left| \int_I \int_{\Rmb^d} \left( b(\Xlimit_{\frac{i}{n}}(s),x) - b_M(\Xlimit_{\frac{i}{n}}(s),x) \right) G(\frac{i}{n},v)\,\lawlimit_{v,s}(dx)\,dv \right|^2 \notag \\
		& \quad + \frac{5}{n} \sum_{i=1}^n \Emb \left| \frac{1}{n} \sum_{j=1}^n \int_{\Rmb^d} \left( b_M(\Xlimit_{\frac{i}{n}}(s),x) - \btil_m(\Xlimit_{\frac{i}{n}}(s),x) \right) G_n(\frac{i}{n},\frac{j}{n})\,\lawlimit_{\frac{j}{n},s}(dx) \right|^2 \notag \\
		& \quad + \frac{5}{n} \sum_{i=1}^n \Emb \left| \int_I \int_{\Rmb^d} \left( b_M(\Xlimit_{\frac{i}{n}}(s),x) - \btil_m(\Xlimit_{\frac{i}{n}}(s),x) \right) G(\frac{i}{n},v)\,\lawlimit_{v,s}(dx)\,dv \right|^2 \notag \\
		& \quad + \frac{5}{n} \sum_{i=1}^n \Emb \left| \frac{1}{n} \sum_{j=1}^n \int_{\Rmb^d} \btil_m(\Xlimit_{\frac{i}{n}}(s),x)G_n(\frac{i}{n},\frac{j}{n})\,\lawlimit_{\frac{j}{n},s}(dx) \right. \notag \\
		& \qquad \left. - \int_I \int_{\Rmb^d} \btil_m(\Xlimit_{\frac{i}{n}}(s),x)G(\frac{i}{n},v)\,\lawlimit_{v,s}(dx)\,dv \right|^2 \notag \\
		& =: 5 \sum_{k=1}^5 \Rmc_s^{n,k}. \label{eq:truncation-0}
	\end{align}
	
	Next we analyze each term.
	For $\Rmc_s^{n,1}$ and $\Rmc_s^{n,2}$, using \eqref{eq:b-M}, Remark \ref{rmk:linear-growth}(b), Proposition \ref{prop:finite-moment-uniform} and the Cauchy-Schwarz inequality we have
	\begin{align}
		\Rmc_s^{n,1} 
		& \le \frac{C}{n} \sum_{i=1}^n \Emb \left[ \frac{1}{n} \sum_{j=1}^n \int_{\Rmb^d} \left( 1+|\Xlimit_{\frac{i}{n}}(s)|+|x| \right) \left( \one_{\{|\Xlimit_{\frac{i}{n}}(s)| > M\}} + \one_{\{|x| > M\}} \right) \lawlimit_{\frac{j}{n},s}(dx) \right]^2 \notag \\
		& \le \frac{C}{n} \sum_{i=1}^n \Emb \left[ \left( 1+|\Xlimit_{\frac{i}{n}}(s)| \right)^2 \one_{\{|\Xlimit_{\frac{i}{n}}(s)| > M\}} \right] \notag \\
		& \le \frac{C}{\sqrt{M}}, \label{eq:truncation-1}
	\end{align}
	and
	\begin{align}
		\Rmc_s^{n,2} 
		& \le \frac{C}{n} \sum_{i=1}^n \Emb \left[ \int_I \int_{\Rmb^d} \left( 1+|\Xlimit_{\frac{i}{n}}(s)|+|x| \right) \left( \one_{\{|\Xlimit_{\frac{i}{n}}(s)| > M\}} + \one_{\{|x| > M\}} \right) \lawlimit_{v,s}(dx)\,dv \right]^2 \notag \\
		& \le \frac{C}{n} \sum_{i=1}^n \Emb \left[ \left( 1+|\Xlimit_{\frac{i}{n}}(s)| \right)^2 \one_{\{|\Xlimit_{\frac{i}{n}}(s)| > M\}} \right] + C \int_I \Emb \left[ \left( 1+|\Xlimit_v(s)| \right)^2 \one_{\{|\Xlimit_v(s)| > M\}} \right] dv \notag \\
		& \le \frac{C}{\sqrt{M}}. \label{eq:truncation-2}
	\end{align}	
	For $\Rmc_s^{n,3}$ and $\Rmc_s^{n,4}$, using \eqref{eq:b-M-m} we have
	\begin{equation}
		\label{eq:truncation-3}
		\Rmc_s^{n,3} \le \frac{C}{M^2}, \quad \Rmc_s^{n,4} \le \frac{C}{M^2}.
	\end{equation}
	For $\Rmc_s^{n,5}$, using the step graphon structure \eqref{eq:G_n-step-graphon} of $G_n$ and by adding and subtracting terms, we have
	\begin{align}
		\Rmc_s^{n,5} & = \int_I \Emb \left| \int_I \int_{\Rmb^d} \btil_m(\Xlimit_{\frac{\lceil nu \rceil}{n}}(s),x)G_n(u,v)\,\lawlimit_{\frac{\lceil nv \rceil}{n},s}(dx)\,dv \right. \notag \\
		& \qquad \left. - \int_I \int_{\Rmb^d} \btil_m(\Xlimit_{\frac{\lceil nu \rceil}{n}}(s),x)G(\frac{\lceil nu \rceil}{n},v)\,\lawlimit_{v,s}(dx)\,dv \right|^2 du \notag \\
		& \le 3 \int_I \Emb \left| \int_I \int_{\Rmb^d} \btil_m(\Xlimit_{\frac{\lceil nu \rceil}{n}}(s),x) \left( G_n(u,v) - G(u,v) \right) \lawlimit_{\frac{\lceil nv \rceil}{n},s}(dx)\,dv \right|^2 du \notag \\
		& \quad + 3 \int_I \Emb \left| \int_I \int_{\Rmb^d} \btil_m(\Xlimit_{\frac{\lceil nu \rceil}{n}}(s),x) \left( G(u,v) - G(\frac{\lceil nu \rceil}{n},v) \right) \lawlimit_{\frac{\lceil nv \rceil}{n},s}(dx)\,dv \right|^2 du \notag \\
		& \quad + 3 \int_I \Emb \left| \int_I \int_{\Rmb^d} \btil_m(\Xlimit_{\frac{\lceil nu \rceil}{n}}(s),x)G(\frac{\lceil nu \rceil}{n},v) \left( \lawlimit_{\frac{\lceil nv \rceil}{n},s}(dx) - \lawlimit_{v,s}(dx) \right) dv \right|^2 du \notag \\
		& =: \Rmc_s^{n,6} + \Rmc_s^{n,7} + \Rmc_s^{n,8}. \label{eq:truncation-4}
	\end{align}
	For $\Rmc_s^{n,6}$, using the definition of $\btil_m$ in \eqref{eq:btil-m}, Proposition \ref{prop:finite-moment-uniform} and Remark \ref{rmk:graphon-convergence}, we have
	\begin{align*}
		\Rmc_s^{n,6} & \le 3m \sum_{k=1}^m \int_I \Emb \left[ a_k^2(\Xlimit_{\frac{\lceil nu \rceil}{n}}(s)) \right]  \\
		& \qquad \cdot \left| \int_I \left( G_n(u,v) - G(u,v) \right) \left( \int_{\Rmb^d} c_k(x) \one_{\{|x| \le M\}} \,\lawlimit_{\frac{\lceil nv \rceil}{n},s}(dx) \right) dv \right|^2 du  \\
		& \le C_M \|G_n-G\|, 
	\end{align*}
	where $C_M$ depends on $M$ but not on $n$ or $s$. 
	For $\Rmc_s^{n,7}$, we have
	\begin{equation*}
		\Rmc_s^{n,7} \le C_M \int_{I \times I} \left| G(u,v) - G(\frac{\lceil nu \rceil}{n},v) \right| du\,dv.
	\end{equation*}
	For $\Rmc_s^{n,8}$, using \eqref{eq:Wasserstein-duality} and the Lipschitz property of $b$ (and hence $\btil_m$), we have  
	\begin{equation*}
		\Rmc_s^{n,8} \le C_M \int_I W_2^2 (\lawlimit_{\frac{\lceil nv \rceil}{n},s}, \lawlimit_{v,s}) \,dv.
	\end{equation*}	
	Combining above three estimates with \eqref{eq:truncation-4} and using Remark \ref{rmk:graphon-convergence}, Condition \ref{cond:G_n-step-graphon}, Condition \ref{cond:G-continuous} and Corollary \ref{cor:continuity-Lipschitz-special}(a) gives
	\begin{equation*}
		\lim_{n \to \infty} \sup_{s \ge 0} \Rmc_s^{n,5} = 0.
	\end{equation*}
	Combining this with \eqref{eq:truncation-0}--\eqref{eq:truncation-3} gives
	\begin{equation*}
		\limsup_{n \to \infty} \sup_{s \ge 0} \Rmc^n_s \le \frac{C}{\sqrt{M}}.
	\end{equation*}
	Taking $\limsup_{M \to \infty}$ completes the proof.
\end{proof}

\begin{proof}[Proof of Theorem \ref{thm:moment-convergence-uniform}]
	(a)
	Using It\^o's formula, we have
	\begin{align*}
		& \frac{1}{n} \sum_{i=1}^n \Emb |X_i^n(t)-\Xlimit_{\frac{i}{n}}(t)|^2 \\
		& = \frac{1}{n} \sum_{i=1}^n \Emb \int_0^t 2(X_i^n(s)-\Xlimit_{\frac{i}{n}}(s)) \cdot \left( f(X_i^n(s)) - f(\Xlimit_{\frac{i}{n}}(s)) \right. \\
		& \qquad \left. + \frac{1}{n} \sum_{j=1}^n \xi_{ij}^n b(X_i^n(s),X_j^n(s)) - \int_I \int_{\Rmb^d} b(\Xlimit_{\frac{i}{n}}(s),x)G(\frac{i}{n},v)\,\lawlimit_{v,s}(dx)\,dv \right) ds.
	\end{align*}
	This implies that the function 
	\begin{equation}
		\label{eq:moment-convergence-uniform-0}
		\alpha^n(t) := \frac{1}{n} \sum_{i=1}^n \Emb |X_i^n(t)-\Xlimit_{\frac{i}{n}}(t)|^2
	\end{equation} 
	is differentiable, and
	\begin{align}
		& \frac{1}{n} \sum_{i=1}^n \Emb |X_i^n(t)-\Xlimit_{\frac{i}{n}}(t)|^2 - \frac{1}{n} \sum_{i=1}^n \Emb |X_i^n(r)-\Xlimit_{\frac{i}{n}}(r)|^2 \notag \\
		& = \frac{1}{n} \sum_{i=1}^n \Emb \int_r^t 2(X_i^n(s)-\Xlimit_{\frac{i}{n}}(s)) \cdot \left( f(X_i^n(s)) - f(\Xlimit_{\frac{i}{n}}(s)) \right. \notag \\
		& \qquad \left. + \frac{1}{n} \sum_{j=1}^n \xi_{ij}^n b(X_i^n(s),X_j^n(s)) - \int_I \int_{\Rmb^d} b(\Xlimit_{\frac{i}{n}}(s),x)G(\frac{i}{n},v)\,\lawlimit_{v,s}(dx)\,dv \right) ds. \label{eq:moment-convergence-uniform-1}
	\end{align}
	
	For each $s \ge 0$, using \eqref{eq:c0} we have
	\begin{equation}
		\frac{1}{n} \sum_{i=1}^n \Emb \left[(X_i^n(s)-\Xlimit_{\frac{i}{n}}(s)) \cdot \left( f(X_i^n(s)) - f(\Xlimit_{\frac{i}{n}}(s)) \right) \right] \le -c_0 \frac{1}{n} \sum_{i=1}^n \Emb \left[ |X_i^n(s)-\Xlimit_{\frac{i}{n}}(s)|^2 \right]. \label{eq:moment-convergence-uniform-2}
	\end{equation}
	For the rest in the integrand of \eqref{eq:moment-convergence-uniform-1}, by adding and subtracting terms, we have
	\begin{align}
		& \frac{1}{n} \sum_{i=1}^n \Emb \left[(X_i^n(s)-\Xlimit_{\frac{i}{n}}(s)) \right. \notag \\
		& \qquad \left. \cdot \left( \frac{1}{n} \sum_{j=1}^n \xi_{ij}^n b(X_i^n(s),X_j^n(s)) - \int_I \int_{\Rmb^d} b(\Xlimit_{\frac{i}{n}}(s),x)G(\frac{i}{n},v)\,\lawlimit_{v,s}(dx)\,dv \right) \right] \notag \\
		& = \frac{1}{n} \sum_{i=1}^n \Emb \left[(X_i^n(s)-\Xlimit_{\frac{i}{n}}(s)) \cdot \left( \frac{1}{n} \sum_{j=1}^n \xi_{ij}^n \left( b(X_i^n(s),X_j^n(s)) - b(\Xlimit_{\frac{i}{n}}(s),\Xlimit_{\frac{j}{n}}(s)) \right) \right) \right] \notag \\	
		& \quad + \frac{1}{n} \sum_{i=1}^n \Emb \left[(X_i^n(s)-\Xlimit_{\frac{i}{n}}(s)) \right. \notag \\
		& \qquad \left. \cdot \left( \frac{1}{n} \sum_{j=1}^n \left( \xi_{ij}^n b(\Xlimit_{\frac{i}{n}}(s),\Xlimit_{\frac{j}{n}}(s)) - \int_{\Rmb^d} b(\Xlimit_{\frac{i}{n}}(s),x)G_n(\frac{i}{n},\frac{j}{n})\,\lawlimit_{\frac{j}{n},s}(dx) \right) \right) \right] \notag \\
		& \quad + \frac{1}{n} \sum_{i=1}^n \Emb \left[(X_i^n(s)-\Xlimit_{\frac{i}{n}}(s)) \right. \notag \\
		& \qquad \left. \cdot \left( \frac{1}{n} \sum_{j=1}^n \int_{\Rmb^d} b(\Xlimit_{\frac{i}{n}}(s),x)G_n(\frac{i}{n},\frac{j}{n})\,\lawlimit_{\frac{j}{n},s}(dx) - \int_I \int_{\Rmb^d} b(\Xlimit_{\frac{i}{n}}(s),x)G(\frac{i}{n},v)\,\lawlimit_{v,s}(dx)\,dv \right) \right] \notag \\
		& =: \Smc_s^{n,1} + \Smc_s^{n,2} + \Smc_s^{n,3}. \label{eq:moment-convergence-uniform-3}
	\end{align}
	For $\Smc_s^{n,1}$, using the Lipschitz property of $b$ and Young's inequality, we have
	\begin{equation}
		\Smc_s^{n,1} \le 2K_b\frac{1}{n} \sum_{i=1}^n \Emb |X_i^n(s)-\Xlimit_{\frac{i}{n}}(s)|^2. \label{eq:moment-convergence-uniform-4}
	\end{equation}
	For $\Smc_s^{n,2}$, using the Cauchy-Schwarz inequality we have
	\begin{align*}
		\Smc_s^{n,2} & \le \left( \frac{1}{n} \sum_{i=1}^n \Emb |X_i^n(s)-\Xlimit_{\frac{i}{n}}(s)|^2 \right)^{1/2} \\
		& \quad \cdot \left( \frac{1}{n} \sum_{i=1}^n \Emb \left| \frac{1}{n} \sum_{j=1}^n \left( \xi_{ij}^n b(\Xlimit_{\frac{i}{n}}(s),\Xlimit_{\frac{j}{n}}(s)) - \int_{\Rmb^d} b(\Xlimit_{\frac{i}{n}}(s),x)G_n(\frac{i}{n},\frac{j}{n})\,\lawlimit_{\frac{j}{n},s}(dx) \right) \right|^2 \right)^{1/2}. 
	\end{align*}
	Due to the independence of $\xi_{ij}^n$ and $\Xlimit_u$, we have
	\begin{align}
		& \Emb \left| \frac{1}{n} \sum_{j=1}^n \left( \xi_{ij}^n b(\Xlimit_{\frac{i}{n}}(s),\Xlimit_{\frac{j}{n}}(s)) - \int_{\Rmb^d} b(\Xlimit_{\frac{i}{n}}(s),x)G_n(\frac{i}{n},\frac{j}{n})\,\lawlimit_{\frac{j}{n},s}(dx) \right) \right|^2 \notag \\
		& = \frac{1}{n^2} \sum_{j=1}^n \Emb \left| \xi_{ij}^n b(\Xlimit_{\frac{i}{n}}(s),\Xlimit_{\frac{j}{n}}(s)) - \int_{\Rmb^d} b(\Xlimit_{\frac{i}{n}}(s),x)G_n(\frac{i}{n},\frac{j}{n})\,\lawlimit_{\frac{j}{n},s}(dx) \right|^2 \notag \\
		& \le \frac{C}{n}, \label{eq:moment-convergence-uniform-9}
	\end{align}	
	where the last line uses Remark \ref{rmk:linear-growth}(b) and Proposition \ref{prop:finite-moment-uniform}.
	Therefore
	\begin{equation}
		\Smc_s^{n,2} \le \frac{C}{\sqrt{n}} \left( \frac{1}{n} \sum_{i=1}^n \Emb |X_i^n(s)-\Xlimit_{\frac{i}{n}}(s)|^2 \right)^{1/2}. \label{eq:moment-convergence-uniform-5}
	\end{equation}
	For $\Smc_s^{n,3}$, using the Cauchy-Schwarz inequality and the definition of $\Rmc_s^n$ in \eqref{eq:R-n-s}, we have 
	\begin{equation}
		\Smc_s^{n,3} \le (\Rmc_s^n)^{1/2} \left( \frac{1}{n} \sum_{i=1}^n \Emb |X_i^n(s)-\Xlimit_{\frac{i}{n}}(s)|^2 \right)^{1/2}. \label{eq:moment-convergence-uniform-6}
	\end{equation}
	
	Combining \eqref{eq:moment-convergence-uniform-0}--\eqref{eq:moment-convergence-uniform-4}, \eqref{eq:moment-convergence-uniform-5} and \eqref{eq:moment-convergence-uniform-6}, we have
	\begin{align*}
		\alpha^n(t) - \alpha^n(r) 
		& \le -2(c_0-2K_b) \int_r^t \alpha^n(s) \, ds 
		+ \left(\frac{C}{\sqrt{n}} + 2\sup_{s \ge 0} \sqrt{\Rmc_s^n} \right) \int_r^t \sqrt{\alpha^n(s)} \,ds.
	\end{align*}
	Recall that the function $\alpha^n(t)$ is differentiable, non-negative, and $\alpha^n(0)=0$.	
	It then follows from Lemma \ref{lem:ODE} (with $a_1=2(c_0-2K_b)$, $a_2=\frac{C}{\sqrt{n}} + 2\sup_{s \ge 0} \sqrt{\Rmc_s^n}$, $a_3=0$) that
	\begin{equation*}
		\alpha^n(t) \le C \left( \frac{1}{n} + \sup_{s \ge 0} \Rmc_s^n \right).
	\end{equation*}
	Combining this with Lemma \ref{lem:truncation} gives part (a).
	
	(b) Next we prove the first convergence statement in \eqref{eq:LLN-uniform}.
	Write
	\begin{equation}
		\label{eq:lawlimit-n}
		\lawlimit^n(t) := \frac{1}{n} \sum_{i=1}^n \lawlimit_{\frac{i}{n},t}.
	\end{equation}
	Using the triangle inequality we have
	\begin{equation}
		\label{eq:LLN-uniform-pf-1}
		W_2(\lawpre^n(t),\lawlimitmean(t)) \le W_2(\lawpre^n(t),\lawlimit^n(t)) + W_2(\lawlimit^n(t),\lawlimitmean(t)).
	\end{equation}	
	Taking $\pi = \frac{1}{n} \sum_{i=1}^n \Lmc(X_i^n(t),\Xlimit_{\frac{i}{n}}(t))$ as the coupling of $\lawpre^n(t)$ and $\lawlimit^n(t)$ and using part (a), we have
	\begin{equation*}
		\sup_{t \ge 0} W_2(\lawpre^n(t),\lawlimit^n(t)) \le \sup_{t \ge 0} \left( \frac{1}{n} \sum_{i=1}^n \Emb |X_i^n(t)-\Xlimit_{\frac{i}{n}}(t)|^2 \right)^{1/2} \to 0
	\end{equation*}	
	as $n \to \infty$.
	Using the convexity of $W^2_2(\cdot,\cdot)$ and Corollary \ref{cor:continuity-Lipschitz-special}(a) we have
	\begin{equation}
		\label{eq:LLN-uniform-pf-3}
		\sup_{t \ge 0} W_2^2(\lawlimit^n(t),\lawlimitmean(t)) \le \int_0^1 \sup_{t \ge 0} W_2^2(\lawlimit_{\frac{\lceil nu \rceil}{n},t}, \lawlimit_{u,t}) \,du \to 0
	\end{equation}
	as $n \to \infty$.
	Combining these three displays gives the first convergence in part (b).
	
	Finally, for the second convergence statement in part (b), let
	\begin{equation}
		\label{eq:emplimit-n}
		\emplimit^n(t) := \frac{1}{n} \sum_{i=1}^n \delta_{\Xlimit_{\frac{i}{n}}(t)}.
	\end{equation}
	From the triangle inequality we have
	\begin{align}
		\Emb W_2(\emppre^n(t),\lawlimitmean(t)) & \le \Emb W_2(\emppre^n(t),\emplimit^n(t)) + \Emb W_2(\emplimit^n(t),\lawlimit^n(t)) + W_2(\lawlimit^n(t),\lawlimitmean(t)). \label{eq:LLN-uniform-pf-5}
	\end{align}
	In view of \eqref{eq:LLN-uniform-pf-3}, it suffices to show
	\begin{equation}
		\label{eq:LLN-uniform-claim}
		\sup_{t \ge 0} \Emb W_2(\emppre^n(t),\emplimit^n(t)) + \sup_{t \ge 0} \Emb W_2(\emplimit^n(t),\lawlimit^n(t)) \to 0
	\end{equation}
	as $n \to \infty$.
	Taking $\pi = \frac{1}{n} \sum_{i=1}^n \delta_{(X_i^n(t),\Xlimit_{\frac{i}{n}}(t))}$ as the coupling of $\emppre^n(t)$ and $\emplimit^n(t)$ and using part (a), we have
	\begin{equation*}
		\sup_{t \ge 0} \Emb W_2(\emppre^n(t),\emplimit^n(t)) \le \sup_{t \ge 0} \left( \frac{1}{n} \sum_{i=1}^n \Emb |X_i^n(t)-\Xlimit_{\frac{i}{n}}(t)|^2 \right)^{1/2} \to 0
	\end{equation*}
	as $n \to \infty$.
	Applying Lemma \ref{lem:Wasserstein-empirical} with $Y_i = \Xlimit_{\frac{i}{n}}$, $p=3$ and $q=4$, we have
	\begin{equation*}
		\Emb W_2(\emplimit^n(t),\lawlimit^n(t)) \le \left( \Emb W_p^p(\emplimit^n(t),\lawlimit^n(t)) \right)^{1/p} \le C \left( \int_{\Rmb^d} |x|^q \, \lawlimit^n(t)(dx) \right)^{1/q} a(n),
	\end{equation*}
	where $a(n) = n^{-1/d}+n^{-1/12}$ is defined in \eqref{eq:an}.
	It then follows from Proposition \ref{prop:finite-moment-uniform} that
	\begin{equation}
		\label{eq:LLN-uniform-pf-6}
		\sup_{t \ge 0} \Emb W_2(\emplimit^n(t),\lawlimit^n(t)) \le C a(n) \to 0
	\end{equation}
	as $n \to \infty$.
	Therefore \eqref{eq:LLN-uniform-claim} holds and hence the second convergence in part (b) holds.		
	This completes the proof.
\end{proof}

\begin{Remark}
	\label{rmk:choice-of-p}
	The choice of $p=3$ above \eqref{eq:LLN-uniform-pf-6} could be replaced by any $2 < p < 4$.
	As a result, the constant $C$ and rate $a(n)$ will change accordingly, by Lemma \ref{lem:Wasserstein-empirical}.
\end{Remark}

\begin{proof}[Proof of Theorem \ref{thm:moment-convergence-uniform-Lipschitz}]
	(a) 
	Similar to the proof of Theorem \ref{thm:moment-convergence-uniform}(a), we apply It\^{o}'s formula and get
	\begin{align*}
		\Emb |X_i^n(t)-\Xlimit_{\frac{i}{n}}(t)|^2
		& = \Emb \int_0^t 2(X_i^n(s)-\Xlimit_{\frac{i}{n}}(s)) \cdot \left( f(X_i^n(s)) - f(\Xlimit_{\frac{i}{n}}(s)) \right. \\
		& \qquad \left. + \frac{1}{n} \sum_{j=1}^n \xi_{ij}^n b(X_i^n(s),X_j^n(s)) - \int_I \int_{\Rmb^d} b(\Xlimit_{\frac{i}{n}}(s),x)G(\frac{i}{n},v)\,\lawlimit_{v,s}(dx)\,dv \right) ds.
	\end{align*}
	This implies that the functions 
	\begin{equation}
		\label{eq:moment-convergence-uniform-Lipschitz-1}
		\bar{\alpha}_i^n(t) := \Emb |X_i^n(t)-\Xlimit_{\frac{i}{n}}(t)|^2 \quad \text{ and } \quad \bar{\alpha}^n(t) := \frac{1}{n} \sum_{i=1}^n \Emb |X_i^n(t)-\Xlimit_{\frac{i}{n}}(t)|^2
	\end{equation} 
	are differentiable, and
	\begin{align}
		& \Emb |X_i^n(t)-\Xlimit_{\frac{i}{n}}(t)|^2 - \Emb |X_i^n(r)-\Xlimit_{\frac{i}{n}}(r)|^2 \notag \\
		& = \Emb \int_r^t 2(X_i^n(s)-\Xlimit_{\frac{i}{n}}(s)) \cdot \left( f(X_i^n(s)) - f(\Xlimit_{\frac{i}{n}}(s)) \right. \notag \\
		& \qquad \left. + \frac{1}{n} \sum_{j=1}^n \xi_{ij}^n b(X_i^n(s),X_j^n(s)) - \int_I \int_{\Rmb^d} b(\Xlimit_{\frac{i}{n}}(s),x)G(\frac{i}{n},v)\,\lawlimit_{v,s}(dx)\,dv \right) ds. \label{eq:moment-convergence-uniform-Lipschitz-2}
	\end{align}
	
	For each $s \ge 0$, using \eqref{eq:c0} we have
	\begin{equation}
		\label{eq:moment-convergence-uniform-Lipschitz-3}
		\Emb \left[(X_i^n(s)-\Xlimit_{\frac{i}{n}}(s)) \cdot \left( f(X_i^n(s)) - f(\Xlimit_{\frac{i}{n}}(s)) \right) \right] \le -c_0 \Emb |X_i^n(s)-\Xlimit_{\frac{i}{n}}(s)|^2.
	\end{equation}
	For the rest in the integrand, by adding and subtracting terms and using Condition \ref{cond:G_n=G}, we have
	\begin{align}
		& \Emb \left[(X_i^n(s)-\Xlimit_{\frac{i}{n}}(s)) \right. \notag \\
		& \qquad \left. \cdot \left( \frac{1}{n} \sum_{j=1}^n \xi_{ij}^n b(X_i^n(s),X_j^n(s)) - \int_I \int_{\Rmb^d} b(\Xlimit_{\frac{i}{n}}(s),x)G(\frac{i}{n},v)\,\lawlimit_{v,s}(dx)\,dv \right) \right] \notag \\
		& = \Emb \left[(X_i^n(s)-\Xlimit_{\frac{i}{n}}(s)) \cdot \left( \frac{1}{n} \sum_{j=1}^n \xi_{ij}^n \left( b(X_i^n(s),X_j^n(s)) - b(\Xlimit_{\frac{i}{n}}(s),\Xlimit_{\frac{j}{n}}(s)) \right) \right) \right] \notag \\	
		& \quad + \Emb \left[(X_i^n(s)-\Xlimit_{\frac{i}{n}}(s)) \right. \notag \\
		& \qquad \left. \cdot \left( \frac{1}{n} \sum_{j=1}^n \left( \xi_{ij}^n b(\Xlimit_{\frac{i}{n}}(s),\Xlimit_{\frac{j}{n}}(s)) - \int_{\Rmb^d} b(\Xlimit_{\frac{i}{n}}(s),x)G(\frac{i}{n},\frac{j}{n})\,\lawlimit_{\frac{j}{n},s}(dx) \right) \right) \right] \notag \\
		& \quad + \Emb \left[(X_i^n(s)-\Xlimit_{\frac{i}{n}}(s)) \right. \notag \\
		& \qquad \left. \cdot \left( \frac{1}{n} \sum_{j=1}^n \int_{\Rmb^d} b(\Xlimit_{\frac{i}{n}}(s),x)G(\frac{i}{n},\frac{j}{n})\,\lawlimit_{\frac{j}{n},s}(dx) - \int_I \int_{\Rmb^d} b(\Xlimit_{\frac{i}{n}}(s),x)G(\frac{i}{n},v)\,\lawlimit_{v,s}(dx)\,dv \right) \right] \notag \\
		& =: \Smcbar_s^{n,i,1} + \Smcbar_s^{n,i,2} + \Smcbar_s^{n,i,3}. \label{eq:moment-convergence-uniform-Lipschitz-4}
	\end{align}	
	For $\Smcbar_s^{n,i,1}$, using the Cauchy-Schwarz inequality, the Lipschitz property of $b$ and Young's inequality, we have
	\begin{align}
		\Smcbar_s^{n,i,1} & \le \Emb \left[ \left|X_i^n(s)-\Xlimit_{\frac{i}{n}}(s)\right| \cdot \frac{K_b}{n} \sum_{j=1}^n \left( \left| X_i^n(s) - \Xlimit_{\frac{i}{n}}(s) \right| + \left|X_j^n(s)-\Xlimit_{\frac{j}{n}}(s) \right| \right) \right] \notag \\
		& \le \frac{3K_b}{2} \Emb |X_i^n(s)-\Xlimit_{\frac{i}{n}}(s)|^2 + \frac{K_b}{2} \frac{1}{n} \sum_{j=1}^n \Emb |X_j^n(s)-\Xlimit_{\frac{j}{n}}(s)|^2. \label{eq:moment-convergence-uniform-Lipschitz-5}
	\end{align}
	For $\Smcbar_s^{n,i,2}$, using the Cauchy-Schwarz inequality and the weak LLN type estimate \eqref{eq:moment-convergence-uniform-9}, we have
	\begin{align}
		\Smcbar_s^{n,i,2} & \le \left( \Emb |X_i^n(s)-\Xlimit_{\frac{i}{n}}(s)|^2 \right)^{1/2} \notag \\
		& \quad \cdot \left( \Emb \left| \frac{1}{n} \sum_{j=1}^n \left( \xi_{ij}^n b(\Xlimit_{\frac{i}{n}}(s),\Xlimit_{\frac{j}{n}}(s)) - \int_{\Rmb^d} b(\Xlimit_{\frac{i}{n}}(s),x)G_n(\frac{i}{n},\frac{j}{n})\,\lawlimit_{\frac{j}{n},s}(dx) \right) \right|^2 \right)^{1/2} \notag \\
		& \le \frac{C}{\sqrt{n}} \left( \Emb |X_i^n(s)-\Xlimit_{\frac{i}{n}}(s)|^2 \right)^{1/2}. \label{eq:moment-convergence-uniform-Lipschitz-6}
	\end{align}
	For $\Smcbar_s^{n,i,3}$, note that
	\begin{align*}
		& \Emb \left| \frac{1}{n} \sum_{j=1}^n \int_{\Rmb^d} b(\Xlimit_{\frac{i}{n}}(s),x)G(\frac{i}{n},\frac{j}{n})\,\lawlimit_{\frac{j}{n},s}(dx) - \int_I \int_{\Rmb^d} b(\Xlimit_{\frac{i}{n}}(s),x)G(\frac{i}{n},v)\,\lawlimit_{v,s}(dx)\,dv \right|^2 \\
		& = \Emb \left| \int_I \int_{\Rmb^d} b(\Xlimit_{\frac{i}{n}}(s),x)G(\frac{i}{n},\frac{\lceil nv \rceil}{n})\,\lawlimit_{\frac{\lceil nv \rceil}{n},s}(dx)\,dv - \int_I \int_{\Rmb^d} b(\Xlimit_{\frac{i}{n}}(s),x)G(\frac{i}{n},v)\,\lawlimit_{v,s}(dx)\,dv \right|^2 \\
		& \le 2 \Emb \left| \int_I \int_{\Rmb^d} b(\Xlimit_{\frac{i}{n}}(s),x) \left( G(\frac{i}{n},\frac{\lceil nv \rceil}{n}) - G(\frac{i}{n},v) \right) \lawlimit_{\frac{\lceil nv \rceil}{n},s}(dx)\,dv \right|^2 \\
		& \quad + 2 \Emb \left| \int_I \int_{\Rmb^d} b(\Xlimit_{\frac{i}{n}}(s),x) \left( \lawlimit_{\frac{\lceil nv \rceil}{n},s}(dx) - \lawlimit_{v,s}(dx) \right) G(\frac{i}{n},v) dv \right|^2 \\
		& \le \frac{C}{n^2},
	\end{align*}
	where the last inequality uses Condition \ref{cond:G-Lipschitz}, Proposition \ref{prop:finite-moment-uniform} and Remark \ref{rmk:linear-growth}(b) for the first term, and the Lipschitz property of $b$, \eqref{eq:Wasserstein-duality} and Corollary \ref{cor:continuity-Lipschitz-special}(b) for the second term.
	Therefore 
	\begin{align}
		\Smcbar_s^{n,i,3} & \le \left( \Emb |X_i^n(s)-\Xlimit_{\frac{i}{n}}(s)|^2 \right)^{1/2} \notag \\
		& \quad \cdot \left( \Emb \left| \frac{1}{n} \sum_{j=1}^n \int_{\Rmb^d} b(\Xlimit_{\frac{i}{n}}(s),x)G(\frac{i}{n},\frac{j}{n})\,\lawlimit_{\frac{j}{n},s}(dx) - \int_I \int_{\Rmb^d} b(\Xlimit_{\frac{i}{n}}(s),x)G(\frac{i}{n},v)\,\lawlimit_{v,s}(dx)\,dv \right|^2 \right)^{1/2} \notag \\
		& \le \frac{C}{n} \left( \Emb |X_i^n(s)-\Xlimit_{\frac{i}{n}}(s)|^2 \right)^{1/2}. \label{eq:moment-convergence-uniform-Lipschitz-7}
	\end{align}
	
	Combining \eqref{eq:moment-convergence-uniform-Lipschitz-1}--\eqref{eq:moment-convergence-uniform-Lipschitz-7}, we have
	\begin{align}		
		\bar{\alpha}_i^n(t) - \bar{\alpha}_i^n(r) &
		\le -(2c_0-3K_b) \int_r^t \bar{\alpha}_i^n(s) \, ds + K_b \int_r^t \bar{\alpha}^n(s) \,ds
		+ \frac{C}{\sqrt{n}} \int_r^t \sqrt{\bar{\alpha}_i^n(s)} \,ds, \label{eq:moment-convergence-uniform-Lipschitz-9}
	\end{align}
	Taking the average over $i=1,\dotsc,n$ gives
	\begin{equation*}
		\bar{\alpha}^n(t) - \bar{\alpha}^n(r) 
		\le -2(c_0-2K_b) \int_r^t \bar{\alpha}^n(s) \, ds + \frac{C}{\sqrt{n}} \int_r^t \sqrt{\bar{\alpha}^n(s)} \,ds.
	\end{equation*}
	Since the function $\bar{\alpha}^n(t)$ is non-negative and differentiable with $\bar{\alpha}^n(0)=0$, using Lemma \ref{lem:ODE} (with $a_1=2(c_0-2K_b)$, $a_2=\frac{C}{\sqrt{n}}$, $a_3=0$) we have
		$\bar{\alpha}^n(t) \le \frac{C}{n}$.
	From this and \eqref{eq:moment-convergence-uniform-Lipschitz-9} we further have
	\begin{equation*}
		\bar{\alpha}_i^n(t) - \bar{\alpha}_i^n(r) 
		\le -(2c_0-3K_b) \int_r^t \bar{\alpha}_i^n(s) \, ds + \frac{C}{\sqrt{n}} \int_r^t \sqrt{\bar{\alpha}_i^n(s)} \,ds + \frac{C}{n} (t-r).
	\end{equation*}	
	Since the function $\bar{\alpha}_i^n(t)$ is non-negative and differentiable with $\bar{\alpha}_i^n(0)=0$, it follows from Lemma \ref{lem:ODE} again that $\bar{\alpha}_i^n(t) \le \frac{C}{n}$, uniformly in $t \ge 0$, $n \in \Nmb$ and $i=1,\dotsc,n$. 
	This gives part (a).
	
	(b)
	The proof is similar to that of Theorem \ref{thm:moment-convergence-uniform}(b), but we will have better estimates under Conditions \ref{cond:G-Lipschitz} and \ref{cond:G_n=G}.
	Recall $\lawlimit^n(t)$ in \eqref{eq:lawlimit-n}.
	Taking $\pi = \frac{1}{n} \sum_{i=1}^n \Lmc(X_i^n(t),\Xlimit_{\frac{i}{n}}(t))$ as the coupling of $\lawpre^n(t)$ and $\lawlimit^n(t)$ and using part (a), we have
	\begin{equation*}
		\sup_{t \ge 0} W_2(\lawpre^n(t),\lawlimit^n(t)) \le \sup_{t \ge 0} \left( \frac{1}{n} \sum_{i=1}^n \Emb |X_i^n(t)-\Xlimit_{\frac{i}{n}}(t)|^2 \right)^{1/2} \le \frac{C}{\sqrt{n}}.
	\end{equation*}
	Using the convexity of $W^2_2(\cdot,\cdot)$ and Corollary \ref{cor:continuity-Lipschitz-special}(b) we have
	\begin{equation}
		\label{eq:LLN-uniform-Lipschitz-pf-1}
		\sup_{t \ge 0} W_2^2(\lawlimit^n(t),\lawlimitmean(t)) \le \int_0^1 \sup_{t \ge 0} W_2^2(\lawlimit_{\frac{\lceil nu \rceil}{n},t}, \lawlimit_{u,t}) \,du  \le \frac{C}{n^2}.
	\end{equation}
	Combining these two estimates with \eqref{eq:LLN-uniform-pf-1} gives the first statement in part (b).
	
	For the second statement in part (b), recall $\emplimit^n(t)$ in \eqref{eq:emplimit-n}.
	Taking $\pi = \frac{1}{n} \sum_{i=1}^n \delta_{(X_i^n(t),\Xlimit_{\frac{i}{n}}(t))}$ as the coupling of $\emppre^n(t)$ and $\emplimit^n(t)$ and using part (a), we have
	\begin{equation*}
		\sup_{t \ge 0} \Emb W_2(\emppre^n(t),\emplimit^n(t)) \le \sup_{t \ge 0} \left( \frac{1}{n} \sum_{i=1}^n \Emb |X_i^n(t)-\Xlimit_{\frac{i}{n}}(t)|^2 \right)^{1/2} \le \frac{C}{\sqrt{n}}.
	\end{equation*}
	Combining this with \eqref{eq:LLN-uniform-pf-5}, \eqref{eq:LLN-uniform-pf-6} and \eqref{eq:LLN-uniform-Lipschitz-pf-1}
	gives the second statement in part (b).
	
	(c)
	Finally, for all $n,k \in \Nmb$ and any distinct $i_1,\dotsc,i_k \in \{1,\dotsc,n\}$, taking $\pi = \Lmc\left(\left(X_{i_1}^n(t), \dotsc, X_{i_k}^n(t)\right), \left(\Xlimit_{\frac{i_1}{n}}(t), \dotsc, \Xlimit_{\frac{i_k}{n}}(t)\right)\right)$ as the coupling and using part (a), we have
	\begin{align*}
		\sup_{t \ge 0} W_2(\Lmc(X_{i_1}^n(t), \dotsc, X_{i_k}^n(t)), \, \lawlimit_{\frac{i_1}{n},t} \otimes \dotsb \otimes \lawlimit_{\frac{i_k}{n},t})
		& \le \left( \sum_{j=1}^k \Emb |X_{i_j}^n(t) - \Xlimit_{\frac{i_j}{n}}(t)|^2 \right)^{1/2} 
		\le \frac{C\sqrt{k}}{\sqrt{n}}.
	\end{align*}			
	This gives part (c) and completes the proof.	
\end{proof}

\subsection{Proofs for Section \ref{sec:Euler}}
\label{sec:pf-Euler}

We first show the following uniform-in-time estimates.

\begin{Lemma}
	\label{lem:finite-moment-uniform-Euler}
	There exist $h_0, C \in (0,\infty)$ such that
	\begin{equation*}
		\sup_{n \ge 1} \max_{i=1,\dotsc,n} \Emb |X_i^{n,h}(s)-X_i^{n,h}(s_h)|^2 \le C(s-s_h) \le Ch, \quad \forall \, s \ge 0, \: h \in (0,h_0),
	\end{equation*}
	and 
	\begin{equation*}
		\sup_{h \in (0,h_0)} \sup_{n \ge 1} \max_{i=1,\dotsc,n} \sup_{t \ge 0} \Emb |X_i^{n,h}(t)|^2 \le C.
	\end{equation*}
\end{Lemma}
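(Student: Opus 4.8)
The plan is to exploit the fact that the Euler scheme \eqref{eq:system-n-h} has a \emph{frozen} drift on each mesh interval, so that on the grid it reduces to an explicit recursion amenable to a discrete contraction argument. Write $D_i^{n,h}(t) := f(X_i^{n,h}(t)) + \tfrac1n\sum_{j=1}^n \xi_{ij}^n b(X_i^{n,h}(t),X_j^{n,h}(t))$; then on $[kh,(k+1)h)$ the drift of $X_i^{n,h}$ equals the constant (and $\mathcal F_{kh}$-measurable) vector $D_i^{n,h}(kh)$, and
\begin{equation*}
	X_i^{n,h}((k+1)h) = X_i^{n,h}(kh) + h\,D_i^{n,h}(kh) + \sigma(B_{\frac{i}{n}}((k+1)h) - B_{\frac{i}{n}}(kh)).
\end{equation*}
Since $\xi_{ij}^n\in[0,1]$, Remark \ref{rmk:linear-growth}(b) gives $\Emb|D_i^{n,h}(kh)|^2 \le C(1 + \Emb|X_i^{n,h}(kh)|^2 + \tfrac1n\sum_j \Emb|X_j^{n,h}(kh)|^2)$, and a routine induction on $k$ using this bound shows $\max_i\sup_{t\in[0,T]}\Emb|X_i^{n,h}(t)|^2<\infty$ for every finite $T$, which makes the quantities below well-defined and finite.

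Next I would set $m_{i,k} := \Emb|X_i^{n,h}(kh)|^2$ and $m_k := \tfrac1n\sum_{i=1}^n m_{i,k}$, square the displayed identity, and use that $B_{\frac in}((k+1)h)-B_{\frac in}(kh)$ is centred and independent of $\mathcal F_{kh}$ to obtain
\begin{equation*}
	m_{i,k+1} = m_{i,k} + 2h\,\Emb[X_i^{n,h}(kh)\cdot D_i^{n,h}(kh)] + h^2\,\Emb|D_i^{n,h}(kh)|^2 + C_\sigma h,
\end{equation*}
with $C_\sigma$ depending only on $\sigma$ and $d$. Exactly as in the proof of Proposition \ref{prop:finite-moment-uniform-n}, combining \eqref{eq:c0}, the Lipschitz bound on $b$ (together with $\xi_{ij}^n\in[0,1]$) and Young's inequality yields, for any $\varepsilon>0$,
\begin{equation*}
	\Emb[X_i^{n,h}(kh)\cdot D_i^{n,h}(kh)] \le -(c_0-\tfrac32 K_b-\varepsilon)\,m_{i,k} + \tfrac{K_b}{2}\,m_k + C_\varepsilon,
\end{equation*}
together with $\Emb|D_i^{n,h}(kh)|^2 \le C(1+m_{i,k}+m_k)$. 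Averaging over $i$ and taking $\varepsilon=\kappa/2$ collapses the recursion to $m_{k+1}\le(1-\kappa h + C_1 h^2)\,m_k + C_2 h$, so choosing $h_0$ small (depending only on $\kappa,c_0,K_b,d,\sigma$) keeps the coefficient in $(1-\tfrac{\kappa h}{2},1)$ for $h\in(0,h_0)$; a discrete Grönwall estimate then gives $m_k\le\max\{m_0,4C_2/\kappa\}$ for all $k$, and $m_0\le\sup_u\Emb|\Xlimit_u(0)|^2$ is finite and $n$-independent. Feeding this bound on $m_k$ back into the per-particle recursion, and using $c_0-\tfrac32 K_b-\tfrac\kappa2\ge\tfrac\kappa2>0$, gives $m_{i,k+1}\le(1-\tfrac{\kappa h}{2})\,m_{i,k}+Ch$, hence $m_{i,k}\le C$ uniformly in $i,k,n$ and $h\in(0,h_0)$.

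Finally I would transfer these bounds off the grid. For $s\ge0$ put $k=\lfl s/h\rfl$, so $s_h=kh$ and $X_i^{n,h}(s)-X_i^{n,h}(s_h) = (s-s_h)D_i^{n,h}(s_h)+\sigma(B_{\frac in}(s)-B_{\frac in}(s_h))$; taking second moments and using the uniform bound $\Emb|D_i^{n,h}(s_h)|^2\le C$ just established gives $\Emb|X_i^{n,h}(s)-X_i^{n,h}(s_h)|^2 \le C(s-s_h)^2 + C_\sigma(s-s_h) \le C(s-s_h)\le Ch$, using $s-s_h\le h\le h_0\le1$, which is the first assertion, while $\Emb|X_i^{n,h}(s)|^2\le 3m_{i,k}+3h^2\Emb|D_i^{n,h}(s_h)|^2+3C_\sigma h\le C$ is the second. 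I expect the main obstacle to be the ordering of the estimates: the per-particle recursion carries the coupling term $\tfrac{K_b}{2}m_k$, so the uniform bound on the average $m_k$ must be obtained first, with the $O(h^2)$ discretization error and $O(h)$ diffusion error absorbed into the contraction factor $1-\tfrac{\kappa h}{2}$ — a two-stage structure mirroring Propositions \ref{prop:finite-moment-uniform} and \ref{prop:finite-moment-uniform-n}. One also has to keep track that $h_0$ and all constants depend only on the model parameters and on $\sup_u\Emb|\Xlimit_u(0)|^2$, never on $n$, which holds automatically because every constant entering the recursion is $n$-free.
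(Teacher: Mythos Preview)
Your proposal is correct and follows essentially the same approach as the paper: both reduce to the discrete-time recursion on the grid, expand $|X_i^{n,h}((k+1)h)|^2$, use \eqref{eq:c0} and the Lipschitz bound on $b$ to obtain a one-step inequality with an $O(h^2)$ discretization error, average over $i$ first to close the recursion for the mean (contraction factor $1-\kappa_h h$ with $\kappa_h=\kappa-Ch$), then feed this bound back into the per-particle recursion, and finally extend off the grid via the frozen-drift representation. Your handling of constants and of the two-stage (average-then-individual) structure matches the paper's proof of Lemma~\ref{lem:finite-moment-uniform-Euler} almost line by line.
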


\begin{proof}
	Before analyzing the system \eqref{eq:system-n-h}, consider the following equivalent discrete-time model: $Z_i^{n,h}(0) = X_i^{n,h}(0)$ and
	\begin{align*}
		Z_i^{n,h}(k+1) & = Z_i^{n,h}(k) + \left( f(Z_i^{n,h}(k)) + \frac{1}{n} \sum_{j=1}^n \xi_{ij}^n b(Z_i^{n,h}(k),Z_j^{n,h}(k)) \right) h + \Delta_k B_{\frac{i}{n}}, \quad k \in \Nmb_0,
	\end{align*} 
	where $\Delta_k B_{\frac{i}{n}} := B_{\frac{i}{n}}((k+1)h) - B_{\frac{i}{n}}(kh)$.
	Note that $Z_i^{n,h}(k) := X_i^{n,h}(kh)$.
	
	We claim that
	\begin{equation}
		\label{eq:finite-moment-uniform-Euler-1}
		\sup_{h \in (0,h_0)} \sup_{n \ge 1} \max_{i=1,\dotsc,n} \sup_{k \in \Nmb_0} \Emb |Z_i^{n,h}(k)|^2 < \infty,
	\end{equation}
	for some $h_0 \in (0,\infty)$.
	To see this, write
	\begin{align*}
		& |Z_i^{n,h}(k+1)|^2 - |Z_i^{n,h}(k)|^2 \\
		& = 2 Z_i^{n,h}(k) \cdot \left( Z_i^{n,h}(k+1) - Z_i^{n,h}(k) \right) + |Z_i^{n,h}(k+1) - Z_i^{n,h}(k)|^2\\
		& =  2 Z_i^{n,h}(k) \cdot f(Z_i^{n,h}(k)) h + 2 Z_i^{n,h}(k) \cdot \left( \frac{1}{n} \sum_{j=1}^n \xi_{ij}^n b(Z_i^{n,h}(k),Z_j^{n,h}(k)) \right) h + \zeta^{n,h}_i(k) \cdot \sigma \Delta_k B_{\frac{i}{n}} \\
		& \quad + \left|f(Z_i^{n,h}(k)) + \frac{1}{n} \sum_{j=1}^n \xi_{ij}^n b(Z_i^{n,h}(k),Z_j^{n,h}(k))\right|^2 h^2 + \left|\sigma \Delta_k B_{\frac{i}{n}}\right|^2, 
	\end{align*}
	where $\zeta^{n,h}_i(k)$ is measurable with respect to $\sigma\{B_{\frac{j}{n}}(s) : 0 \le s \le kh\}$.
	Let $\beta_i^{n,h}(k) := \Emb |Z_i^{n,h}(k)|^2$ and $\beta^{n,h}(k) := \frac{1}{n} \sum_{j=1}^n \Emb|Z_j^{n,h}(k)|^2$.	
	Using \eqref{eq:c0}, the Lipschitz property of $f,b$ and the Cauchy-Schwarz inequality we have
	\begin{align*}
		\beta_i^{n,h}(k+1) - \beta_i^{n,h}(k) 
		& = \Emb |Z_i^{n,h}(k+1)|^2 - \Emb |Z_i^{n,h}(k)|^2 \\
		& \le \left( -2c_0 \beta_i^{n,h}(k) + C\sqrt{\beta_i^{n,h}(k)} + 2K_b \beta_i^{n,h}(k) + 2 K_b \sqrt{\beta_i^{n,h}(k) \beta^{n,h}(k)} \right) h \\
		& \qquad + C \left( 1 + \beta_i^{n,h}(k) + \beta^{n,h}(k) \right) h^2 + Ch.
	\end{align*}
	Since $C\sqrt{\beta_i^{n,h}(k)} \le (c_0-2K_b) \beta_i^{n,h}(k) + \frac{C^2}{4(c_0-2K_b)}$ and $2\sqrt{\beta_i^{n,h}(k) \beta^{n,h}(k)} \le \beta_i^{n,h}(k) + \beta^{n,h}(k)$, we have
	\begin{align}
		& \beta_i^{n,h}(k+1) - \beta_i^{n,h}(k) \label{eq:finite-moment-uniform-Euler-2} \\
		& \le \left( -(c_0-K_b) \beta_i^{n,h}(k) + K_b \beta^{n,h}(k) + C \right) h + C \left( 1 + \beta_i^{n,h}(k) + \beta^{n,h}(k) \right) h^2. \notag
	\end{align}
	Taking the average over $i=1,\dotsc,n$ gives
	\begin{align*}
		\beta^{n,h}(k+1) & \le (1-h\kappa_h)\beta^{n,h}(k) + Ch,
	\end{align*}
	where $\kappa_h := c_0 - 2K_b - Ch$.
	From \eqref{eq:kappa} we can choose $h_0 > 0$ such that $\inf_{h \in (0,h_0)} \kappa_h > 0$ and $1-h\kappa_h \in (0,1)$ for all $h \in (0,h_0)$.
	Then for all $h \in (0,h_0)$, 
	\begin{align*}
		\beta^{n,h}(k+1) & \le (1-h\kappa_h)^2\beta^{n,h}(k-1) + (1-h\kappa_h)Ch + Ch \le \dotsb \\
		& \le (1-h\kappa_h)^{k+1}\beta^{n,h}(0) + \sum_{j=0}^k (1-h\kappa_h)^jCh \\
		& \le (1-h\kappa_h)^{k+1}C + \frac{Ch}{1-(1-h\kappa_h)} \le C.
	\end{align*}	
	Applying this back to \eqref{eq:finite-moment-uniform-Euler-2} gives
	\begin{align*}
		\beta_i^{n,h}(k+1) & \le (1-h\kappa_h)\beta_i^{n,h}(k) + Ch,
	\end{align*}
	which again gives $\beta_i^{n,h}(k+1) \le C$ and verifies \eqref{eq:finite-moment-uniform-Euler-1}.
	
	Using \eqref{eq:finite-moment-uniform-Euler-1} and Remark \ref{rmk:linear-growth}(b), we immediately have the first statement, which further implies the second statement.
	This completes the proof.
\end{proof}

\begin{proof}[Proof of Theorem \ref{thm:moment-convergence-uniform-Euler}]
	Recall $h_0$ in Lemma \ref{lem:finite-moment-uniform-Euler}.	
	Using It\^o's formula, we have
	\begin{align*}
		\Emb |X_i^{n,h}(t)-X_i^n(t)|^2
		& = \Emb \int_0^t 2 \left( X_i^{n,h}(s)-X_i^n(s) \right) \cdot \left( f(X_i^{n,h}(s_h)) - f(X_i^n(s)) \right. \\
		& \qquad \left. + \frac{1}{n} \sum_{j=1}^n \xi_{ij}^n b(X_i^{n,h}(s_h),X_j^{n,h}(s_h)) - \frac{1}{n} \sum_{j=1}^n \xi_{ij}^n b(X_i^n(s),X_j^n(s)) \right) ds.
	\end{align*}
	This implies that the functions $$\gamma_i^n(t) := \Emb |X_i^{n,h}(t)-X_i^n(t)|^2, \quad \gamma^n(t) := \frac{1}{n} \sum_{i=1}^n \Emb |X_i^{n,h}(t)-X_i^n(t)|^2$$ are differentiable.
	
	By adding and subtracting terms, we have
	\begin{align*}
		& \Emb \left[ \left( X_i^{n,h}(s)-X_i^n(s) \right) \cdot \left( f(X_i^{n,h}(s_h)) - f(X_i^n(s)) \right) \right] \\
		& \le \Emb \left[ \left( X_i^{n,h}(s)-X_i^n(s) \right) \cdot \left( f(X_i^{n,h}(s_h)) - f(X_i^{n,h}(s)) \right) \right] \\
		& \quad + \Emb \left[ \left( X_i^{n,h}(s)-X_i^n(s) \right) \cdot \left( f(X_i^{n,h}(s)) - f(X_i^n(s)) \right) \right] \\
		& \le C \sqrt{\gamma_i^n(s) h} - c_0 \gamma_i^n(s),
	\end{align*}
	where the last line uses the Cauchy-Schwarz inequality, the Lipschitz property of $f$ and Lemma \ref{lem:finite-moment-uniform-Euler} for the first term and \eqref{eq:c0} for the second term.
	Also by adding and subtracting terms, we have 
	\begin{align*}
		& \Emb \left[ \left( X_i^{n,h}(s)-X_i^n(s) \right) \cdot \left( \frac{1}{n} \sum_{j=1}^n \xi_{ij}^n b(X_i^{n,h}(s_h),X_j^{n,h}(s_h)) - \frac{1}{n} \sum_{j=1}^n \xi_{ij}^n b(X_i^n(s),X_j^n(s)) \right) \right] \\
		& = \Emb \left[ \left( X_i^{n,h}(s)-X_i^n(s) \right) \cdot \frac{1}{n} \sum_{j=1}^n \xi_{ij}^n \left( b(X_i^{n,h}(s_h),X_j^{n,h}(s_h)) - b(X_i^{n,h}(s),X_j^{n,h}(s)) \right) \right] \\
		& \quad + \Emb \left[ \left( X_i^{n,h}(s)-X_i^n(s) \right) \cdot \frac{1}{n} \sum_{j=1}^n \xi_{ij}^n \left( b(X_i^{n,h}(s),X_j^{n,h}(s)) - b(X_i^n(s),X_j^n(s)) \right) \right] \\
		& \le C\sqrt{\gamma_i^n(s)h}  + K_b \gamma_i^n(s) + K_b \sqrt{\gamma_i^n(s)\gamma^n(s)},
	\end{align*}
	where the last line uses the Cauchy-Schwarz inequality, the Lipschitz property of $b$ and Lemma \ref{lem:finite-moment-uniform-Euler}.
	Combining these two estimates gives
	\begin{equation}
		\label{eq:moment-convergence-uniform-Euler-1}
		\gamma_i^n(t) - \gamma_i^n(r) \le -2(c_0-K_b) \int_r^t \gamma_i^n(s) \,ds + C\sqrt{h} \int_r^t \sqrt{\gamma_i^n(s)} \,ds + 2K_b \int_r^t \sqrt{\gamma_i^n(s)\gamma^n(s)} \,ds
	\end{equation}
	for all $t > r \ge 0$.
	Taking the average over $i=1,\dotsc,n$, we get
	\begin{equation*}
		\gamma^n(t) - \gamma^n(r) \le -2(c_0-2K_b) \int_r^t \gamma^n(s) \,ds + C\sqrt{h} \int_r^t \sqrt{\gamma^n(s)} \,ds.
	\end{equation*}
	Since the function $\gamma^n(t)$ is non-negative and differentiable with $\gamma^n(0)=0$, using Lemma \ref{lem:ODE} (with $a_1=2(c_0-2K_b), a_2=C\sqrt{h}, a_3=0$) we have
		$\gamma^n(t) \le Ch$.
	Applying this to \eqref{eq:moment-convergence-uniform-Euler-1} gives
	\begin{equation*}
		\gamma_i^n(t) - \gamma_i^n(r) \le -2(c_0-K_b) \int_r^t \gamma_i^n(s) \,ds + C\sqrt{h} \int_r^t \sqrt{\gamma_i^n(s)} \,ds.
	\end{equation*}
	Since the function $\gamma_i^n(t)$ is non-negative and differentiable with $\gamma_i^n(0)=0$, it follows from Lemma \ref{lem:ODE} again that $\gamma_i^n(t) \le Ch$, uniformly in $h \in (0,h_0)$, $t \ge 0$, $n \in \Nmb$ and $i=1,\dotsc,n$.
	This completes the proof.
\end{proof}

\appendix

\section{A Wasserstein distance result}

In this section we prove Lemma \ref{lem:Wasserstein-empirical} on the Wasserstein distance about the empirical measure of independent (but not necessarily identically distributed) random variables.
It is a natural generalization of \cite[Theorem 1]{FournierGuillin2015rate} where i.i.d.\ samples are studied.
It is also worth mentioning that for i.i.d.\ samples, the upper bounds are obtained in \cite[Lemma 3.7 and Appendix]{GuoObloj2019computational} for complete cases with explicit constants that was not provided in \cite[Theorem 1]{FournierGuillin2015rate}.
But the three cases in Lemma \ref{lem:Wasserstein-empirical} below are sufficient for our use and we provide a proof for completeness.

\begin{Lemma}
	\label{lem:Wasserstein-empirical}
	Let $\{Y_i : i \in \Nmb\}$ be independent $\Rmb^d$-valued random variables.
	Write $$\mubar_i := \Lmc(Y_i), \quad \nu_n := \frac{1}{n} \sum_{i=1}^n \delta_{Y_i}, \quad \nubar_n := \frac{1}{n} \sum_{i=1}^n \mubar_i.$$	
	Let $p>0$.
	Assume that $\sup_{i \in \Nmb} \Emb|Y_i|^q < \infty$ for some $q > p$.
	Then there exists a constant $C$ depending only on $p,q,d$ such that, for all $n \ge 1$,
	\begin{align*}
		\Emb W_p^p(\nu_n,\nubar_n) & \le C \left( \int_{\Rmb^d} |x|^q \, \nubar_n(dx) \right)^{p/q} \\
		& \times \begin{cases}
		n^{-1/2} + n^{-(q-p)/q} & \mbox{if } p>d/2 \mbox{ and } q \ne 2p, \\
		n^{-1/2} \log(1+n) + n^{-(q-p)/q} & \mbox{if } p=d/2 \mbox{ and } q \ne 2p, \\
		n^{-p/d} + n^{-(q-p)/q} & \mbox{if } p \in (0,d/2) \mbox{ and } q \ne d/(d-p).
		\end{cases}
	\end{align*}
\end{Lemma}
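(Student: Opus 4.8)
The plan is to follow the proof of \cite[Theorem 1]{FournierGuillin2015rate} essentially verbatim, the point being that passing from i.i.d.\ to merely independent samples costs nothing: the only place the common law of the samples enters that argument is through a second‑moment computation for the number of points falling in a fixed Borel set, and this computation goes through with the common law replaced by the averaged measure $\nubar_n$. First I would normalize. Set $M_q := \int_{\Rmb^d} |x|^q\,\nubar_n(dx) = \frac1n\sum_{i=1}^n \Emb|Y_i|^q \le \sup_i \Emb|Y_i|^q < \infty$. If $M_q = 0$ then $Y_i \equiv 0$ and the claim is trivial, so assume $M_q > 0$ and replace each $Y_i$ by $Y_i/M_q^{1/q}$; this preserves independence, divides $\nu_n$ and $\nubar_n$ by the common factor $M_q^{1/q}$ (as pushforwards under $x\mapsto x/M_q^{1/q}$), scales $W_p^p(\nu_n,\nubar_n)$ by $M_q^{-p/q}$, and rescales $M_q$ to $1$. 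So it suffices to prove the stated bound under the normalization $\int_{\Rmb^d}|x|^q\,\nubar_n(dx)=1$, with $C$ depending only on $p,q,d$.

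Next comes the deterministic multiresolution estimate. For an integer $\ell\ge 0$ let $\mathcal P_\ell$ be the partition of the cube $[-1,1)^d$ into $2^{d\ell}$ congruent dyadic subcubes, the complement of a large cube being handled by a dyadic decomposition of $\Rmb^d$ into annuli $\{2^{k-1}\le |x| < 2^k\}$ on each of which the cube estimate is applied after rescaling. The key deterministic inequality of \cite{FournierGuillin2015rate}, valid for any two probability measures $m_1,m_2$ on $\Rmb^d$, bounds $W_p^p(m_1,m_2)$ by a universal constant times a sum of the form $\sum_{k}2^{pk}\,(\text{mass discrepancy of }m_1,m_2\text{ on annulus }k) + \sum_{\ell\ge 0} 2^{-\ell p}\sum_{F\in\mathcal P_\ell}|m_1(F)-m_2(F)|$, with the annular contribution controlled by the $q$‑th moments of $m_1,m_2$ (this is exactly where $q>p$ is needed). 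I would apply this with $m_1=\nu_n$, $m_2=\nubar_n$ and take expectations; since $\Emb\,\nu_n(A)=\nubar_n(A)$ for every Borel set $A$, the tail/annulus terms reproduce the contribution $n^{-(q-p)/q}$ exactly as in \cite{FournierGuillin2015rate}.

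The probabilistic step and the three regimes come last. For each cube $F$, $n\,\nu_n(F)=\sum_{i=1}^n \ind_{\{Y_i\in F\}}$ is a sum of independent Bernoulli variables, so $\Emb\,\nu_n(F)=\nubar_n(F)$ and $\mathrm{Var}(\nu_n(F)) = \frac1{n^2}\sum_{i=1}^n \mubar_i(F)(1-\mubar_i(F)) \le \frac1n\,\nubar_n(F)$; together with the crude bound $|\nu_n(F)-\nubar_n(F)|\le \nu_n(F)+\nubar_n(F)$ this gives $\Emb|\nu_n(F)-\nubar_n(F)| \le \min\{2\nubar_n(F),\,(\nubar_n(F)/n)^{1/2}\}$, which is exactly the i.i.d.\ estimate of \cite{FournierGuillin2015rate} with the common law replaced by $\nubar_n$. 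Summing over $F\in\mathcal P_\ell$ by Cauchy--Schwarz, using $\#\mathcal P_\ell=2^{d\ell}$ and $\sum_{F}\nubar_n(F)\le 1$, yields $\sum_{F\in\mathcal P_\ell}\Emb|\nu_n(F)-\nubar_n(F)| \le \min\{2,\,2^{d\ell/2}n^{-1/2}\}$. Inserting this into $\sum_{\ell\ge 0}2^{-\ell p}\min\{2,\,2^{d\ell/2}n^{-1/2}\}$ and splitting the sum at $\ell_0$ with $2^{\ell_0}\asymp n^{1/d}$ produces $n^{-1/2}$ when $p>d/2$, $n^{-1/2}\log(1+n)$ when $p=d/2$, and $n^{-p/d}$ when $p<d/2$; adding the tail term $n^{-(q-p)/q}$ gives the displayed bound, the hypotheses $q\ne 2p$ (resp.\ $q\ne d/(d-p)$) serving only to guarantee that the two contributions are genuinely of different orders so that the bound is stated with a single dominant rate.

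I do not expect a genuine obstacle: essentially all the work is already in \cite{FournierGuillin2015rate}, and the only new input is the elementary variance inequality $\mathrm{Var}(\nu_n(F))\le \nubar_n(F)/n$, which is where independence (but not identical distribution) is used. The one place requiring a little care is bookkeeping: after passing to the annular decomposition one must normalize the restrictions of $\nu_n$ and $\nubar_n$ to each annulus and check that every estimate of \cite{FournierGuillin2015rate} applies to $\nubar_n$ --- which, after the normalization above, is a bona fide probability measure with $\int|x|^q\,\nubar_n(dx)=1$ --- with constants independent of $n$; this is immediate but worth verifying explicitly.
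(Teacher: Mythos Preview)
Your proposal is correct and takes essentially the same approach as the paper: both observe that the proof of \cite[Theorem~1]{FournierGuillin2015rate} carries over verbatim once one checks the single probabilistic input $\Emb|\nu_n(A)-\nubar_n(A)|\le\min\{2\nubar_n(A),\sqrt{\nubar_n(A)/n}\}$ for every Borel set $A$, which requires only independence (via the variance bound $\mathrm{Var}(\nu_n(A))\le\nubar_n(A)/n$) and not identical distribution. The paper's proof records exactly this estimate and nothing more; you have additionally sketched the deterministic multiresolution machinery of \cite{FournierGuillin2015rate}, which is helpful exposition but not new content.
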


\begin{proof}[Proof of Lemma \ref{lem:Wasserstein-empirical}]
	Fix $A \subset \Rmb^d$.
	In view of the proof of \cite[Theorem 1]{FournierGuillin2015rate},  
	it suffices to verify that
	\begin{equation}
		\label{eq:Wasserstein-claim}
		\Emb |\nu_n(A)-\nubar_n(A)| \le \min \left\{ 2\nubar_n(A), \sqrt{\nubar_n(A)/n} \right\}.
	\end{equation} 
	For this, clearly we have
	$$\Emb |\nu_n(A)-\nubar_n(A)| \le \Emb \nu_n(A) + \nubar_n(A) = 2\nubar_n(A).$$
	Also note that, by the independence of $\{Y_i:i\in\Nmb\}$,
	\begin{align*}
		\Emb |\nu_n(A)-\nubar_n(A)|^2 & = \Emb \left[ \frac{1}{n} \sum_{i=1}^n \left( \one_{\{Y_i \in A\}} - \mubar_i(A) \right) \right]^2 
		= \frac{1}{n^2} \sum_{i=1}^n \Emb \left[ \one_{\{Y_i \in A\}} - \mubar_i(A) \right]^2 \\
		& = \frac{1}{n^2} \sum_{i=1}^n \mubar_i(A) (1-\mubar_i(A)) \le \frac{1}{n^2} \sum_{i=1}^n \mubar_i(A) = \frac{1}{n}\nubar_n(A).
	\end{align*}
	This completes the proof.
\end{proof}


\begin{bibdiv}
\begin{biblist}

\bib{BarreDobsonOttobreZatorska2020fast}{article}{
      author={Barr{\'e}, Julien},
      author={Dobson, Paul},
      author={Ottobre, Michela},
      author={Zatorska, Ewelina},
       title={Fast non mean-field networks: Uniform in time averaging},
        date={2021},
     journal={SIAM Journal on Mathematical Analysis},
	  volume={53},
	  number={1},
	   pages={937\ndash 972},   
}

\bib{BayraktarChakrabortyWu2020graphon}{article}{
      author={Bayraktar, Erhan},
      author={Chakraborty, Suman},
      author={Wu, Ruoyu},
       title={Graphon mean field systems},
        date={2020},
     journal={arXiv preprint arXiv:2003.13180},
}

\bib{BayraktarWu2019mean}{article}{
      author={Bayraktar, Erhan},
      author={Wu, Ruoyu},
       title={Mean field interaction on random graphs with dynamically changing
  multi-color edges},
        date={2019},
     journal={arXiv preprint arXiv:1912.01785},
}

\bib{BetCoppiniNardi2020weakly}{article}{
      author={Bet, Gianmarco},
      author={Coppini, Fabio},
      author={Nardi, Francesca~R},
       title={Weakly interacting oscillators on dense random graphs},
        date={2020},
     journal={arXiv preprint arXiv:2006.07670},
}

\bib{BhamidiBudhirajaWu2019weakly}{article}{
      author={Bhamidi, Shankar},
      author={Budhiraja, Amarjit},
      author={Wu, Ruoyu},
       title={Weakly interacting particle systems on inhomogeneous random
  graphs},
        date={2019},
     journal={Stochastic Processes and their Applications},
      volume={129},
      number={6},
       pages={2174\ndash 2206},
}

\bib{BolleyGuillinVillani2007quantitative}{article}{
      author={Bolley, Fran{\c{c}}ois},
      author={Guillin, Arnaud},
      author={Villani, C{\'e}dric},
       title={Quantitative concentration inequalities for empirical measures on
  non-compact spaces},
        date={2007},
     journal={Probability Theory and Related Fields},
      volume={137},
      number={3-4},
       pages={541\ndash 593},
}

\bib{BudhirajaFan2017}{article}{
      author={Budhiraja, Amarjit},
      author={Fan, Wai-Tong~Louis},
       title={Uniform in time interacting particle approximations for nonlinear
  equations of {P}atlak-{K}eller-{S}egel type},
        date={2017},
     journal={Electron. J. Probab.},
      volume={22},
       pages={37 pp.},
         url={https://doi.org/10.1214/17-EJP25},
}

\bib{BudhirajaMukherjeeWu2019supermarket}{article}{
      author={Budhiraja, Amarjit},
      author={Mukherjee, Debankur},
      author={Wu, Ruoyu},
       title={Supermarket model on graphs},
        date={201906},
     journal={Ann. Appl. Probab.},
      volume={29},
      number={3},
       pages={1740\ndash 1777},
         url={https://doi.org/10.1214/18-AAP1437},
}

\bib{BudhirajaPalMajumder2015long}{article}{
      author={Budhiraja, Amarjit},
      author={Pal~Majumder, Abhishek},
       title={Long time results for a weakly interacting particle system in
  discrete time},
        date={2015},
     journal={Stochastic Analysis and Applications},
      volume={33},
      number={3},
       pages={429\ndash 463},
}

\bib{CainesHuang2018graphon}{inproceedings}{
      author={Caines, Peter~E},
      author={Huang, Minyi},
       title={Graphon mean field games and the {GMFG} equations},
organization={IEEE},
        date={2018},
   booktitle={{2018 IEEE Conference on Decision and Control (CDC)}},
       pages={4129\ndash 4134},
}

\bib{CainesHuang2020graphon}{article}{
      author={Caines, Peter~E},
      author={Huang, Minyi},
       title={Graphon mean field games and the {GMFG} equations},
        date={2020},
     journal={arXiv preprint arXiv:2008.10216},
}

\bib{Carmona2019stochastic}{article}{
      author={Carmona, Rene},
      author={Cooney, Daniel},
      author={Graves, Christy},
      author={Lauriere, Mathieu},
       title={Stochastic graphon games: I. the static case},
        date={2019},
     journal={arXiv preprint arXiv:1911.10664},
}

\bib{Coppini2019long}{article}{
      author={Coppini, Fabio},
       title={Long time dynamics for interacting oscillators on graphs},
        date={2019},
     journal={arXiv preprint arXiv:1908.01520},
}

\bib{CoppiniDietertGiacomin2019law}{article}{
      author={Coppini, Fabio},
      author={Dietert, Helge},
      author={Giacomin, Giambattista},
       title={A law of large numbers and large deviations for interacting
  diffusions on {E}rdős–{R}ényi graphs},
        date={2019},
     journal={Stochastics and Dynamics},
      volume={0},
      number={0},
       pages={2050010},
      eprint={https://doi.org/10.1142/S0219493720500100},
         url={https://doi.org/10.1142/S0219493720500100},
}

\bib{Delarue2017mean}{article}{
      author={Delarue, Fran{\c{c}}ois},
       title={Mean field games: A toy model on an erd{\"o}s-renyi graph.},
        date={2017},
     journal={ESAIM: Proceedings and Surveys},
      volume={60},
       pages={1\ndash 26},
}

\bib{Delattre2016}{article}{
      author={Delattre, Sylvain},
      author={Giacomin, Giambattista},
      author={Lu{\c{c}}on, Eric},
       title={A note on dynamical models on random graphs and
  {F}okker--{P}lanck equations},
        date={2016},
        ISSN={1572-9613},
     journal={Journal of Statistical Physics},
      volume={165},
      number={4},
       pages={785\ndash 798},
         url={https://doi.org/10.1007/s10955-016-1652-3},
}

\bib{FournierGuillin2015rate}{article}{
      author={Fournier, Nicolas},
      author={Guillin, Arnaud},
       title={On the rate of convergence in wasserstein distance of the
  empirical measure},
        date={2015},
     journal={Probability Theory and Related Fields},
      volume={162},
      number={3-4},
       pages={707\ndash 738},
}

\bib{GaoTchuendomCaines2020linear}{article}{
      author={Gao, Shuang},
      author={Tchuendom, Rinel~Foguen},
      author={Caines, Peter~E},
       title={Linear quadratic graphon field games},
        date={2020},
     journal={arXiv preprint arXiv:2006.03964},
}

\bib{GuoObloj2019computational}{article}{
      author={Guo, Gaoyue},
      author={Ob{\l}{\'o}j, Jan},
      author={others},
       title={Computational methods for martingale optimal transport problems},
        date={2019},
     journal={The Annals of Applied Probability},
      volume={29},
      number={6},
       pages={3311\ndash 3347},
}

\bib{KaratzasShreve1991brownian}{book}{
      author={Karatzas, I.},
      author={Shreve, S.~E.},
       title={{Brownian Motion and Stochastic Calculus}},
      series={Graduate Texts in Mathematics},
   publisher={Springer New York},
        date={1991},
      volume={113},
        ISBN={9780387976556},
}

\bib{Kolokoltsov2010}{book}{
      author={Kolokoltsov, V.~N.},
       title={{Nonlinear Markov Processes and Kinetic Equations}},
      series={Cambridge Tracts in Mathematics},
   publisher={Cambridge University Press},
        date={2010},
      volume={182},
}

\bib{LackerSoret2020case}{article}{
      author={Lacker, Daniel},
      author={Soret, Agathe},
       title={A case study on stochastic games on large graphs in mean field
  and sparse regimes},
        date={2020},
     journal={arXiv preprint arXiv:2005.14102},
}

\bib{Lovasz2012large}{book}{
      author={Lov{\'a}sz, L{\'a}szl{\'o}},
       title={Large networks and graph limits},
   publisher={American Mathematical Soc.},
        date={2012},
      volume={60},
}

\bib{Lucon2020quenched}{article}{
      author={Lu{\c{c}}on, Eric},
       title={Quenched asymptotics for interacting diffusions on inhomogeneous
  random graphs},
        date={2020},
     journal={Stochastic Processes and their Applications},
}

\bib{McKean1967propagation}{incollection}{
      author={McKean, H.~P.},
       title={{Propagation of chaos for a class of non-linear parabolic
  equations}},
        date={1967},
   booktitle={Stochastic differential equations ({L}ecture {S}eries in
  {D}ifferential {E}quations, {S}ession 7, {C}atholic {U}niversity, 1967)},
   publisher={Air Force Office Sci. Res., Arlington, Va.},
       pages={41\ndash 57},
}

\bib{OliveiraReis2019interacting}{article}{
      author={Oliveira, Roberto},
      author={Reis, Guilherme},
       title={Interacting diffusions on random graphs with diverging average
  degrees: Hydrodynamics and large deviations},
        date={2019},
     journal={Journal of Statistical Physics},
      volume={176},
       pages={1057\ndash 1087},
}

\bib{PariseOzdaglar2019graphon}{article}{
      author={Parise, Francesca},
      author={Ozdaglar, Asuman~E},
       title={Graphon games: A statistical framework for network games and
  interventions},
        date={2019},
     journal={Available at SSRN: https://ssrn.com/abstract=3437293},
}

\bib{Schultz1969multivariate}{article}{
      author={Schultz, Martin~H.},
       title={L$^\infty$-multivariate approximation theory},
        date={1969},
        ISSN={00361429},
     journal={SIAM Journal on Numerical Analysis},
      volume={6},
      number={2},
       pages={161\ndash 183},
         url={http://www.jstor.org/stable/2949744},
}

\bib{Sznitman1991}{incollection}{
      author={Sznitman, A-S.},
       title={{Topics in propagation of chaos}},
        date={1991},
   booktitle={Ecole d'{E}t{\'e} de {P}robabilit{\'e}s de {S}aint-{F}lour
  {XIX}---1989},
      editor={Hennequin, Paul-Louis},
      series={Lecture Notes in Mathematics},
      volume={1464},
   publisher={Springer Berlin Heidelberg},
     address={Berlin, Heidelberg},
       pages={165\ndash 251},
}

\bib{VasalMishraVishwanath2020sequential}{article}{
      author={Vasal, Deepanshu},
      author={Mishra, Rajesh~K},
      author={Vishwanath, Sriram},
       title={Sequential decomposition of graphon mean field games},
        date={2020},
     journal={arXiv preprint arXiv:2001.05633},
}

\bib{Veretennikov2006ergodic}{incollection}{
      author={Veretennikov, A~Yu},
       title={On ergodic measures for mckean-vlasov stochastic equations},
        date={2006},
   booktitle={Monte carlo and quasi-monte carlo methods 2004},
   publisher={Springer},
       pages={471\ndash 486},
}

\bib{Villani2008optimal}{book}{
      author={Villani, C{\'e}dric},
       title={Optimal transport: old and new},
   publisher={Springer Science \& Business Media},
        date={2008},
      volume={338},
}

\end{biblist}
\end{bibdiv}

\end{document}